\pgfplotsset{compat=1.15}
\newtheorem{thm}{Theorem}[section]
\newtheorem{dfn}[thm]{Definition}
\newtheorem{lem}[thm]{Lemma}
\newtheorem{cor}[thm]{Corollary}
\newtheorem{prop}[thm]{Proposition}
\newtheorem{conjecture}[thm]{Conjecture}
\newcommand{\laur}[1]{\textbf{\underline{#1}}}
\newcommand{\bad}[1]{\operatorname{bad}_{\mathcal{D}}({#1})}
\newcommand{\baddd}[4]{\operatorname{bad}_{\mathcal{#1}_{#2}}^{#3}({#4})}
\newcommand{\proj}[3]{\operatorname{proj}_{#1}({\bf {#2}}_{#3})}
\newcommand{\diver}[1]{\operatorname{div}(#1)}
\newcommand{\ov}{\overline}
\newcommand{\ul}{\underline}
\newcommand{\C}{\mathcal{C}}
\newcommand{\R}{\mathbb R}
\newcommand{\bP}{\mathbb P}
\newcommand{\bE}{\mathbb E}
\title{Distinct degrees and homogeneous sets}
\author{Eoin Long\thanks{School of Mathematics, University of Birmingham, UK. Email: \texttt{e.long@bham.ac.uk}.} \and Lauren\cb{t}iu Ploscaru\thanks{School of Mathematics, University of Birmingham, UK. Email: \texttt{ixp090@student.bham.ac.uk}.\newline \hspace*{1.5em} 
The second author is grateful for support through an EPSRC DTP studentship.}}
\date{13 April 2022}
\begin{document}

\maketitle

\begin{abstract}
    In this paper we investigate the extremal relationship between two well-studied graph parameters: the order of the largest 
    homogeneous set in a graph $G$ and the maximal number of distinct degrees appearing in an induced 
    subgraph of $G$, denoted respectively by $\hom (G)$ and $f(G)$.\vspace{1mm}
    
    Our main theorem improves estimates due to several earlier researchers and 
    shows that if $G$ is an $n$-vertex graph with $\hom (G) \geq n^{1/2}$ then 
    $f(G) \geq \big ( {n}/{\hom (G)} \big )^{1 - o(1)}$. The bound here is sharp up to the $o(1)$-term, and 
    asymptotically solves a conjecture of Narayanan and Tomon. In particular, this implies that $\max \{ \hom (G), f(G) \} \geq n^{1/2 -o(1)}$ for any $n$-vertex graph $G$, 
    which is also sharp.\vspace{1mm}
    
	The above relationship between $\hom (G)$ and $f(G)$ breaks down in the regime where $\hom (G) <  n^{1/2}$. Our second result 
	provides a sharp bound for distinct degrees in biased random graphs, i.e. on 
	$f\big (G(n,p) \big )$. 
	We believe that the behaviour here determines the extremal relationship between 
	$\hom (G)$ and $f(G)$ in this second regime.\vspace{1mm}
    
    Our approach to lower bounding $f(G)$ proceeds via a translation into an (almost) 
    equivalent probabilistic problem, and it can be shown to be effective for arbitrary 
    graphs. It may be of independent interest.
    \end{abstract}

\section{Introduction }
\label{section: introduction}

The focus of this paper is on the extremal relationship between the order of the largest homogeneous set in a graph $G$ and the maximal number of distinct degrees which appear in some induced subgraph of $G$. More precisely, let $\hom (G)$ denote the \emph{homogeneous number} of a graph $G$, given by: 
	\begin{align*}
		\hom (G) 
			&:= 
		\max \big \{ |U| : U \subset V(G) \mbox { with } G[U] \mbox { a complete or empty graph} 
		\big \}. 
	\end{align*}
\indent We also let $f(G)$ denote the \emph{distinct degree number of} $G$, given by: 
	\begin{align*}
		f(G) 
			&:=
		\max \big  \{ k \in {\mathbb N}: G[S] \mbox{ has } k \mbox{ distinct degrees for some } 
		S \subset V(G) \big  \}.
	\end{align*}
\indent These quantities have been well-studied in the literature. Indeed, $\hom (G)$ arises as a key parameter in a variety of settings, including extremal graph theory, graph Ramsey theory and perfect graph theory (see for example \cite{bollo}, \cite{spencert}, \cite{ramsey-survey}, \cite{ramírez2001perfect}). On the other hand, a wide range of results aim to study the possible degree distributions of (induced) subgraphs of a graph, for example \cite{lovasz1993combinatorial}, \cite{Alon198479}, \cite{Pyber199541}, \cite{scott_1992}, \cite{ferber2021graph}, \cite{longetcomp}, and $f(G)$ arises very naturally in this context. 

Erd\H{o}s, Faudree and S\'os  were the first to investigate the relationship between $\hom (G)$ and $f(G)$, focusing in particular on the Ramsey setting, where $\hom (G)$ is (essentially) minimal. Recall that Ramsey's theorem \cite{ramsey1930}, \cite{Erdosold} guarantees that every $n$-vertex graph $G$ satisfies the relation $\hom (G) = \Omega (\log n)$. Erd\H{o}s \cite{Erdosprob} showed, in what is one of the earliest instances of the probabilistic method \cite{alon04}, that there are $n$-vertex graphs $G$ with $\hom (G) = \Theta (\log n)$ and so the logarithmic order is sharp here. However, the existence of all such graphs $G$ has only been demonstrated indirectly via some random process and it is a major open problem to give explicit examples of such graphs (see \cite{barak20122}, \cite{xinli}). Motivated by this, a large body of research has developed concerning the structure of Ramsey graphs \cite{erdosmrd}, \cite{PROMEL}, \cite{Shelah}, \cite{kwan2018ramsey}, \cite{KSP},\cite{narayanan2017ramsey}, aiming to show that they must behave similarly to appropriate random graphs. 

In this context, Erd\H{o}s, Faudree and S\'os \cite{sos} noticed that the random graph $G(n,1/2)$ has $f(G(n,1/2)) = \Omega (n^{1/2})$ with high probability. They conjectured this property must be shared by Ramsey graphs: if $G$ is an $n$-vertex graph with $\hom (G) = O(\log n)$ then $f(G) = \Omega (n^{1/2})$. Bukh and Sudakov confirmed this conjecture in \cite{bukh} with an elegant and influential argument. Furthermore, they noted that there still appeared to be some flexibility here: 
\begin{itemize}
	\item [(a)] Although $f(G(n,1/2)) = \Omega (n^{1/2})$ forms a natural lower bound, 
	they observed that 
	it did not have a matching upper bound, as they proved that $f(G(n,1/2)) = O(n^{2/3})$ whp. 
	\item [(b)] They conjectured that $\hom (G) = n^{o(1)}$ already implies 
	that $f(G) \geq n^{1/2-o(1)}$.
\end{itemize}

 It was later shown by Conlon, Morris, Samotij and Saxton \cite{unpublished}, thus matching the upper bound given in (a), that in fact $f(G(n,1/2)) = \Omega (n^{2/3})$ whp. Recently, Jenssen, Keevash, Long and Yepremyan \cite{JKLY} proved that the same lower bound applies in the Ramsey context, giving a tight bound for the original Ramsey question of Erd\H{o}s, Faudree and S\'os. 

In \cite{narayanan}, Narayanan and Tomon solved the conjecture from (b) above, proving that actually $f(G) = \Omega \big ( (n/\hom (G))^{1/2} \big )$ for all $n$-vertex graphs $G$. They also provided an interesting construction, which suggested a tight bound between the following parameters: if $k \leq n^{1/2}$ then the $n$-vertex $k$-partite Tur\'an graph $T$ (see e.g.\cite{bollo}) satisfies both $\hom (T) = n/k$ and $f(T) = k$. Narayanan and Tomon conjectured that a similar dependence must hold in general: if $G$ is an $n$-vertex graph satisfying $\hom (G) \geq n^{1/2}$ then $f(G) = \Omega (n / \hom (G))$. Supporting their conjecture, the authors proved that indeed $f(G) = \Omega (n / \hom (G))$ when $\hom (G) = \Omega (n/ \log n)$. Jenssen et al. \cite{JKLY} improved this bound to $\hom (G) \geq n^{9/10}$, noting that there were significant obstacles to obtaining $\hom (G) \geq n^{1/2}$.

Our main result here confirms the Narayanan$-$Tomon conjecture up to a logarithmic loss.\vspace{1mm}

\begin{thm}
    \label{thm: N-T-intro-version}
      Let $m\geq n^{1/2}$. Then every $n$-vertex graph $G$ with $\hom (G)\leq m$ satisfies:
      $$f(G) = \Omega \bigg ( \frac {n/m}{\log ^{7/2} (n/m)} \bigg ).$$
\end{thm}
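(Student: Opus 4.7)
The plan is to lower-bound $f(G)$ by translating the combinatorial question into a probabilistic one: for an appropriately chosen sampling probability $p$, consider the random subset $S \subseteq V(G)$ obtained by including each vertex independently with probability $p$, and analyse the distribution of the degree sequence of $G[S]$. Writing $r := n/m$, the aim is to show that for some realisation of $S$, the graph $G[S]$ has $\Omega(r/\log^{7/2} r)$ distinct vertex degrees. The extremal Tur\'an-type example with $\hom(T)=n/k$ and $f(T)=k$ (taking $k=r$) should guide the analysis: there, tuning the sizes of parts within a random sample is exactly what produces distinct degrees, and we want a robust analogue of this phenomenon.

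The first step would be a structural reduction to a \emph{quasi-regular} induced subgraph. I would iteratively restrict to subsets on which degrees lie in an ever narrower window: either this process already exposes many distinct vertex degrees in $G$ (in which case we are essentially done by a Chernoff-based argument that preserves degree separation in a random sub-sample), or it terminates in a large induced subgraph $H$ on $n' \geq n/\operatorname{polylog}(r)$ vertices whose degrees are almost constant. Crucially the hypothesis $\hom(H)\leq \hom(G)\leq m$ is preserved, and $n'/m$ is only a polylog factor smaller than $r$.

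In this near-regular regime, we spread induced degrees by random sampling. For two vertices $u,v\in H$ the difference $D_u(S)-D_v(S)$ of their induced degrees (given $u,v\in S$) has mean $p(d_H(u)-d_H(v))$ and variance controlled by $p(1-p)|N_H(u)\triangle N_H(v)|$. The key combinatorial input is that $\hom(H)\leq m$ forces an abundance of pairs $(u,v)$ with $|N_H(u)\triangle N_H(v)|$ large: otherwise an iterative merging process would yield a homogeneous set of order far exceeding $m$, contradicting the hypothesis. This spread should be exactly what the probabilistic reformulation requires: it guarantees that with positive probability many vertices of $S$ realise pairwise distinct induced degrees.

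The main obstacle is the boundary regime $m \approx n^{1/2}$. Here the target $r \approx n^{1/2}$ coincides with the natural fluctuation scale $\sqrt{pd}$ of vertex degrees in a random induced subgraph of a near-regular host, so signal and noise live at the same scale and cannot be separated by crude Chernoff bounds. The $\log^{7/2}(r)$ loss in the bound reflects this: each step of the quasi-regular reduction costs a polylog factor in the vertex count, and separating degrees under random sampling costs further polylog factors from concentration tails. The delicate part of the argument will be to choose $p$ and the sampling schedule so that these two losses balance, and to track the exponent carefully enough that $7/2$ suffices.
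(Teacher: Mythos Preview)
Your outline is essentially the Bukh--Sudakov / Narayanan--Tomon strategy, and that strategy is known to stall at $f(G)=\Omega\big((n/\hom(G))^{1/2}\big)$, far short of the claimed $n/m$ up to polylogs. The concrete obstruction is the family of \emph{iterated Tur\'an graphs} described in the introduction: take $b<n^{1/2}$ blocks $V_1,\ldots,V_b$, put the complement of an $n^{1/2}$-partite Tur\'an graph on each $V_i$, and join everything between blocks. These graphs are exactly regular, so your quasi-regular reduction is vacuous; and for a uniform random sample $S$ at any fixed density $p$, the variance of $d_S(u)-d_S(v)$ for $u,v$ in the same block is controlled by diversity of order $n^{1/2}/b$, which is far too small to separate $\Theta(n^{1/2})$ degrees. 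Any scheme based on a single sampling probability (or even a single probability vector $\laur{p}$) fails here: one \emph{must} first find separated degrees locally inside each block and then glue the local solutions together, and naive gluing can collapse all the separation you built.

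The paper's proof resolves this by working not with a single $\laur{p}$ but with \emph{probability distributions} $\mathcal{D}$ over $[0.1,0.9]^{V(G)}$, measured by a small-ball quantity $\bad{u,v}$. The crucial gain is a product lemma: if $\mathcal{D}$ is a product of distributions $\mathcal{D}_i$ supported on disjoint coordinate blocks, then $\bad{u,v}\leq\min_i\operatorname{bad}_{\mathcal{D}_i}(u,v)$. This lets one recurse into pieces (via a diversity dichotomy and a degree-splitting step around a well-chosen vertex), obtain good distributions on each piece, and combine them without loss. A separate ``blended'' distribution built from linear combinations of neighbourhood vectors handles the globally diverse case. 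Your sketch has none of this machinery, and without it the boundary regime $m\approx n^{1/2}$ is not a matter of balancing polylog losses but a genuine structural barrier that a single random sample cannot cross.
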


As an immediate corollary of Theorem \ref{thm: N-T-intro-version} we obtain the following result, which 
strengthens the bounds of Bukh and Sudakov \cite{bukh} and of Narayanan and Tomon \cite{narayanan}. 

\begin{cor}
	\label{cor: concise max version}
	Every $n$-vertex graph $G$ satisfies $\max \big \{\hom (G), f(G) \big \} \geq n^{1/2 - o(1)}$.
\end{cor}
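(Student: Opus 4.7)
The plan is to deduce Corollary \ref{cor: concise max version} directly from Theorem \ref{thm: N-T-intro-version} by a simple dichotomy on the size of $\hom(G)$ relative to $n^{1/2}$. Fix an $n$-vertex graph $G$; I will split into the cases $\hom(G) \geq n^{1/2}$ and $\hom(G) < n^{1/2}$.

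In the first case there is nothing to do: we immediately have $\max\{\hom(G), f(G)\} \geq \hom(G) \geq n^{1/2} \geq n^{1/2 - o(1)}$. In the second case, set $m := \lceil n^{1/2} \rceil$, so that $m \geq n^{1/2}$ and $\hom(G) < n^{1/2} \leq m$. Theorem \ref{thm: N-T-intro-version} applies and yields
\[
f(G) = \Omega\!\left( \frac{n/m}{\log^{7/2}(n/m)} \right) = \Omega\!\left( \frac{n^{1/2}}{\log^{7/2}(n^{1/2})} \right) = \frac{n^{1/2}}{(\log n)^{O(1)}},
\]
which is $n^{1/2 - o(1)}$ since a polylogarithmic factor is absorbed into $n^{o(1)}$. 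Combining the two cases gives $\max\{\hom(G), f(G)\} \geq n^{1/2 - o(1)}$.

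There is no real obstacle to this argument, since the corollary is essentially a rebranding of Theorem \ref{thm: N-T-intro-version}: the content lies entirely in the theorem, and the only thing to check is that setting $m$ just above the threshold $n^{1/2}$ produces a bound of the right order. The only mild care needed is in the final step, verifying that $(\log n)^{O(1)}$ is subsumed by $n^{o(1)}$, which is of course standard.
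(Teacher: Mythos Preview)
Your argument is correct and matches the paper's approach: the paper simply states that Corollary~\ref{cor: concise max version} is an immediate consequence of Theorem~\ref{thm: N-T-intro-version}, and your dichotomy on whether $\hom(G) \geq n^{1/2}$ is precisely the intended one-line deduction.
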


 Again, note that the $n^{1/2}$-partite Tur\'an graph on $n$ vertices shows that this bound is essentially sharp. However, as discussed below, there is a large and varied collection of graphs which are close to extremal value here.

Our second result focuses on the regime where $\hom (G) < n^{1/2}$. The Tur\'an construction given above begins to break down here, and in fact the above relationship between the parameters no longer holds; e.g. by our discussion above $\hom (G(n,1/2))\cdot  f(G(n,1/2)) = \Theta (n^{2/3} \log n) \ll n$ whp. Motivated by this, we prove sharp bounds on $f(G(n,p))$ for general values of $p$, extending the results of Bukh and Sudakov \cite{bukh} and of Conlon, Morris, Samotij and Saxton \cite{unpublished}. 

\begin{restatable}{thm}{randomdistinctdegrees}
\label{thm: DD for G(n,p)}
Let $n \in {\mathbb N}$ and let $p:= p(n) \in [0,1/2]$. Then whp the random graph $G(n,p)$ satisfies the following: 
\begin{itemize}
	\item [\emph{(i)}] $f\big(G(n,p)\big)=\Theta\left(\sqrt[3]{pn^2}\right)$ for $p \in [n^{-1/2},1/2]$;
	\item [\emph{(ii)}] $f\big (G(n,p)\big ) = \Theta\big (\Delta ( G(n,p) ) \big )$ for $p \leq n^{-1/2}$.  
\end{itemize}
\end{restatable}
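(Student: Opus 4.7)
The plan is to split the proof by the sparsity of $p$. For part (ii) with $p \le n^{-1/2}$, the upper bound is immediate since $f(G) \le \Delta(G)+1$ for any graph. For the lower bound $f(G(n,p))=\Omega(\Delta(G(n,p)))$, I would further split by sparsity: when $np \ll \log n$, I would take $S=V(G)$ and show, via a second-moment argument combined with Binomial local-CLT estimates, that each target degree $d \le (1-o(1))\Delta$ is realised by some vertex whp, since $\mathbb{E}|\{v:\deg_G(v)=d\}| = n\Pr[\mathrm{Bin}(n-1,p)=d] = \omega(1)$ in this range. When $\log n \ll np \le \sqrt{n}$, $S=V(G)$ only gives $O(\sqrt{np\log n})$ distinct degrees, so I would instead find a tailored induced subgraph via the probabilistic-translation framework introduced earlier in the paper.

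For the upper bound in part (i), I would extend Bukh--Sudakov~\cite{bukh}. Suppose $G[S]$ has $k$ distinct degrees realised by $U=\{u_1,\dots,u_k\}\subseteq S$ with $s=|S|$. Conditioning on the edges inside $U$, the quantities $a_i := |N(u_i)\cap(S\setminus U)|$ are independent $\mathrm{Bin}(s-k,p)$ variables, each with pmf bounded by $O(1/\sqrt{sp})$. The probability that $(a_i + |N(u_i)\cap U|)_i$ equals any fixed $k$-tuple of distinct integers is therefore at most $(C/\sqrt{sp})^k$, and a careful union bound over $(S,U)$ and over target tuples lying in the Binomial concentration window should yield $k = O((pn^2)^{1/3})$ whp.

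For the lower bound in part (i), the plan is to construct $S$ exhibiting $k=\Theta((pn^2)^{1/3})$ distinct degrees via the probabilistic-translation framework. I would choose a base set $T\subseteq V(G)$ and select vertices $u_1,\dots,u_k\in V(G)\setminus T$ so that the induced-subgraph degrees $|N(u_i)\cap T|+|N(u_i)\cap U|$ (with $U=\{u_1,\dots,u_k\}$) are pairwise distinct; the framework reduces this to a distributional problem about $k$ two-level random sums, to be verified for $G(n,p)$ via a Lov\'asz Local Lemma or second-moment computation.

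The main obstacle I anticipate is the lower bound in part~(i). A naive one-step construction (fix $T$ and require $|N(u_i)\cap T|$ to take $k$ distinct values within its Binomial bulk, of width at most $\sqrt{np\log n}$) yields only $k=O(\sqrt{np\log n})$, short of $(pn^2)^{1/3}$ by a polynomial factor. Breaking this ``$\sqrt{np}$-barrier'' requires the probabilistic framework to spread the distinct-degree freedom across both the pool choices $V_{c_i}$ and the in-$U$ contributions simultaneously, which I expect to be the key technical step.
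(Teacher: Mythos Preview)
Your plan has the right instincts, but there are two concrete gaps where the approach as described will not go through.

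\textbf{Upper bound in (i).} The union bound you sketch does not close. Bounding the probability of hitting a fixed distinct $k$-tuple by $(C/\sqrt{sp})^k$ and summing over the $\approx (C'\sqrt{sp\log n})^k$ tuples in the concentration window yields only $(\mathrm{poly}\log n)^k$ per pair $(S,U)$, which is $\ge 1$ and hence useless --- and tuples outside the window, handled via single-coordinate tails, give at best $e^{-\Theta(k^2/sp)}$, which beats the $2^n$ cost of choosing $S$ only when $k \gg n\sqrt{p}$, far above the target. The paper instead extracts a \emph{cubic} exponent: if $G[A]$ has $8b$ distinct degrees then at least $b$ of them deviate by $\ge 3b$ from $p|A|$, and for this set $B$ the edge count $e(A\setminus B,B)$ deviates from its mean by $\Omega(b^2)$. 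Chernoff now gives $e^{-\Omega(b^3/pn)}$, and with $b=\Theta((pn^2)^{1/3})$ this is $e^{-\Omega(n)}$, which survives the union bound over $A$ and $B$. The point is to aggregate $b$ deviations of size $b$ into one edge-count deviation of size $b^2$; your per-coordinate pmf bound does not see this.

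\textbf{Lower bound in (i).} You correctly diagnose the $\sqrt{np}$-barrier and that Lemma~\ref{lem: blended-distribution-control} with $\beta=\Theta(|U|/pn)$ leaves the truncation term in~\eqref{eqn:bad-eqn-control} of order $1$, but the cure is not ``spreading freedom across pools and in-$U$ contributions''. The paper's key observation is that in $G(n,p)$ the diversity $D=\Theta(pn)$ is of the same order as $\Delta(G)$, and this permits a \emph{simultaneous} truncation argument: call $u\in U$ \emph{good} if at most $d_G(u)/25$ of its neighbours have $\laur{q}^u$-coordinate outside $[0.2,0.8]$. Since $|N^S(u)\setminus N^S(u')| \ge D/2$ is a constant fraction of $|N(u)|$ for \emph{every} $u'$, a good $u$ has enough untruncated neighbours to control all pairs $\{u,u'\}$ at once. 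Hoeffding plus Markov shows at least half of $U$ is good with probability $>1/2$; restricting to good vertices eliminates the truncation term entirely, and the rest is Tur\'an as in Lemma~\ref{lem: bad-control-implies-distinct-expected-degrees}.

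\textbf{Part (ii).} Your second-moment and framework plans are plausible, but the paper takes a much shorter route via a direct ``private neighbourhood'' construction (Proposition~\ref{prop: sparse G(n,p) bound}): if each vertex in a set $U$ of size $k$ has at least $k$ neighbours seen by no other member of $U$, then $f(G)\ge k$. For $\log n/n \ll p \le n^{-1/2}$ one verifies this whp for any fixed $U$ of size $pn/8$ by Chernoff; for $p=O(\log n/n)$ one uses known facts about the upper tail of the degree sequence together with $|N(u)\cap N(u')|\le 3$ whp. No local CLT, second moment, or blended distribution is needed here.
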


\noindent \emph{Remark.} As $f(G) = f(\overline{G})$ for any graph $G$, we see that $f(G(n,p))$ and $f(G(n,1-p))$ follow identical distributions, so Theorem \ref{thm: DD for G(n,p)} determines the behaviour of $f(G(n,p))$ for all $p \in [0,1]$.

Together with the known estimates on the homogeneous number of sparse random graphs, Theorem \ref{thm: DD for G(n,p)} suggests a natural extremal relationship between $\hom (G)$ and $f(G)$ when the hypothesis of Theorem \ref{thm: N-T-intro-version} fails, i.e. when $\hom (G) < n^{1/2}$. We further examine this relationship in the concluding remarks in Section {\ref{section: concluding-remarks}}.

Our proofs to both Theorem \ref{thm: N-T-intro-version} and Theorem \ref{thm: DD for G(n,p)} build upon earlier approaches from \cite{bukh} and \cite{JKLY}, but there are many extra challenges in this regime, which require several key new ingredients and ideas. For instance, although Tur\'an graphs represent an example of $n$-vertex graphs $G$ with $\hom (G) = n^{1/2}$ and $f(G) = \Theta (n^{1/2})$, there are several very different looking graphs which exhibit (essentially) the same behaviour, including the random graph $G(n,n^{-1/2})$. 

One interesting class of examples was given by Narayanan and Tomon, which we call `iterated Tur\'an graphs': take $b < n^{1/2}$ vertex disjoint sets $V_1,\ldots, V_b$ of size $n/b$, and on each set $V_i$ put a copy of the complement of the $n^{1/2}$-partite $n/b$-vertex Tur\'an graph, and join all pairs lying in distinct $V_i$ and $V_j$ by an edge. It can be checked that such a graph $G$ has $n$-vertices, that $\hom (G) = n^{1/2}$ and that $f(G) = n^{1/2}$ (any set $V_i$ contains at most $n^{1/2}/b$ vertices with distinct degrees). Noting that the degree in each such graph is $n - n/b + n^{1/2}/b$, we see these graphs are non-isomorphic for different values of $b$, and so there are many distinct extremal situations. 

Standing back from this, consider (1) starting with many vertex disjoint copies of the same graph, (2) complementing the edges of each, and (3) joining all vertices between different classes by an edge. One can observe that, starting with the graph on a single point and running these steps we can obtain a Tur\'an graph (applying the process once), and the iterated Tur\'an graph (applying it twice). One could furthermore iterate more times, and this leads to graphs with very limited neighbourhood diversity (see  the definition before Lemma \ref{lem: blended-distribution-control} below), which was a key parameter in many earlier approaches. One of our results below (see Theorem \ref{thm: DExpD}) allows us to prove lower bounds on $f(G)$ by instead lower bounding auxilliary parameters (see Theorem \ref{thm: DExpD}) and this connection crucially works without diversity assumptions, unlike in earlier approaches.

The above process also highlights a more significant challenge, which arises naturally for this problem. To find a large set $U$ of vertices with distinct degrees in general, these iterated graphs show that sometimes we \emph{must} first find sets $U_i$ of distinct degrees locally in smaller graphs and then combine the results into a larger set $U = \cup U_i$. Combining such sets together can work very well for iterated graphs, but even small changes to the structure here can break the condition $-$ at an extreme, it could be that the sets $U_i$ have distinct degrees in $G[V_i]$ for $i =1,2$ with $V_1$ and $V_2$ disjoint, but that all vertices of $U_1 \cup U_2$ have the same degree when combined in $G[V_1 \cup V_2]$. We avoid this kind of difficulty by moving to a more general probabilistic setting, where we instead find probability distributions with certain well-controlled small ball probabilities. 

Lastly, our approach in Sections \ref{sec: degrees and distributions on the cube} and \ref{sec: building distributions} is quite applicable to the general problem of lower bounding $f(G)$ in an arbitrary graph $-$ see Theorem \ref{thm: DExpD} and Lemma \ref{lem: reverse direction} below.

The paper is organised as follows. In the next section we present a number of tools which will be required in our proof. In Section \ref{sec: degrees and distributions on the cube} we present a probabilistic analogue of the problem of finding many distinct degrees in a graph. In Section \ref{sec: building distributions} we extend this approach to a more robust variant and develop a variety of tools and estimates for studying the distinct degree problem. In Section \ref{sec: N-T conjecture} we prove Theorem \ref{thm: N-T-intro-version} as follows: we first deal with a slightly weaker version in Section \ref{subsection: proof in the larger regime}, which applies when $\hom (G) \geq n^{3/5 + o(1)}$, and then build upon this in subsection \ref{subsection: proof in the entire regime} to prove Theorem \ref{thm: N-T-intro-version}. In Section \ref{sec: random graph distinct degrees} we present the proof of Theorem \ref{thm: DD for G(n,p)}. Finally, in Section \ref{section: concluding-remarks} we conclude with a discussion of the case when $f(G) < n^{1/2}$.

\noindent \textbf{Notation.} Given a graph $G$ and $u,v\in V(G)$, we write $u\sim v$ if $u$ and $v$ are adjacent vertices in $G$ and $u\not\sim v$ if they are not. The neighbourhood of $u$ is given by $N_G(u) = \{v\in V(G): u\sim v\}$ and given $S \subset V(G)$ we let $N^S_G(u) := N_G(u) \cap S$; we will omit the subscript $G$ when the graph is clear from the context. We write $d^S_G(u) = |N^S_G(u)|$.

\indent Given a vertex $u$, we will also represent the neighbourhood of $u$ by a vector $\textbf{u} \in \{0,1\}^{V(G)}$ defined such that $u_v=1$ if and only if $u\sim v$. Given a set $U \subset V$ and a vector ${\bf u} \in {\mathbb R}^V$, we will denote the projection of ${\bf u}$ onto the coordinate set $S$ by $\proj {S}{u}{}$, i.e. for any $v \in S$ we have $ \proj {S}{u}{} _v = {\bf u} _v$. Given $u,v\in V(G)$ we write $\text{div}_G(u,v)$ for the symmetric difference $N(u)\triangle N(v)$. Thus $|\text{div}_G(u,v)|$ is simply the Hamming distance between ${\bf u}$ and ${\bf v}$.

\indent We will write $\ov{G}$ for the complement of the graph $G$. It is easy to note that for any graph $G$ we have $\hom (G) = \hom (\ov{G})$ and $f(G)=f(\ov{G})$ since $\text{div}_G(u,v)=\text{div}_{\ov{G}}(u,v)$ for any $u,v\in V(G)$.

\indent Given $n\in \mathbb{N}$ and $p\in (0,1)$, the Erd\H{o}s$-$R\'enyi random graph $G(n,p)$ is the $n$-vertex graph in which each edge is included in the graph with probability $p$ independently of every other edge. We say that an event that depends on $n$ occurs \emph{with high probability} (whp) if its probability tends to $1$ as $n\to \infty$.

\indent Throughout this paper we will omit floor and ceiling signs when they are not crucial, for the sake of clarity of presentation.

\section{Tools}

In this short section we introduce some tools required for the rest of the paper. We will use the following version of Tur\'an's theorem (see for example Chapter 6 in \cite{bollo}).

\begin{thm}\label{turan}
Let $G$ be a $n$-vertex graph with average degree $d$. Then $G$ has an independent set of size at least $n/(d+1)$. 
\end{thm}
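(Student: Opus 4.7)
The plan is to prove this via the probabilistic method, specifically the classical Caro--Wei argument followed by a single application of Jensen's inequality. First I would take a uniformly random linear ordering $\pi$ of $V(G)$ and let $I_\pi$ consist of those vertices which appear before all of their neighbours in $\pi$. This set is automatically independent: if two adjacent vertices both lay in $I_\pi$, then the one appearing later in $\pi$ would have an earlier neighbour, contradicting the defining property.

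The key computation is that, for a fixed vertex $v$, the probability that $v$ precedes all $d(v)$ of its neighbours is exactly $1/(d(v)+1)$, by symmetry of the uniformly random relative order of the $d(v)+1$ vertices in $\{v\}\cup N(v)$. Linearity of expectation then yields
\[
\bE |I_\pi| \;=\; \sum_{v \in V(G)} \frac{1}{d(v)+1},
\]
so some particular ordering realises an independent set of at least this size. This already recovers the Caro--Wei lower bound on the independence number.

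To finish, I would apply Jensen's inequality to the convex function $x \mapsto 1/(x+1)$ on $[0,\infty)$, which gives
\[
\frac{1}{n}\sum_{v \in V(G)} \frac{1}{d(v)+1} \;\geq\; \frac{1}{\frac{1}{n}\sum_{v \in V(G)} d(v) + 1} \;=\; \frac{1}{d+1},
\]
since $d$ is the average degree of $G$. Multiplying through by $n$ delivers the claimed bound $\alpha(G) \geq n/(d+1)$. There is no substantive obstacle in this argument; the only point meriting a moment's care is checking that the convexity direction is the one we want, i.e.\ that the Caro--Wei sum (which can be much larger than $n/(d+1)$ for very irregular $G$) always dominates, rather than is dominated by, the reciprocal of the averaged denominator.
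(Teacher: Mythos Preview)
Your argument is the standard Caro--Wei proof and is entirely correct; the paper itself does not prove this statement but simply quotes it as a known result with a reference to Bollob\'as's textbook. Your write-up therefore supplies strictly more than the paper does, and there is nothing to compare against.
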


Secondly, we require the following `anticoncentration' theorem for the Littlewood$-$Offord problem, which is due to Erd\H{o}s \cite{LO}:

\begin{thm}[\textbf{Erd\H{o}s--Littlewood--Offord}]\label{loff}
Let $S$ be a set of $n$ real numbers of absolute value at least $1$. Then, for each $\alpha\in \mathbb{R}$, there are at most $\binom{n}{\lfloor n/2\rfloor} = \Theta (2^n n^{-1/2})$ subsets of $S$ whose sum of elements lie in the interval $[\alpha,\alpha+1)$.

\end{thm}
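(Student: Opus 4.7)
The plan is to follow the classical antichain approach, which reduces the question to Sperner's lemma. First I would reduce to the case where all elements of $S$ are positive (and at least $1$). Writing $S=\{s_1,\dots,s_n\}$ and $A:=\{i \in [n]: s_i<0\}$, and setting $t_i:=|s_i|$, the map $T \mapsto T':=T\triangle A$ is a bijection on $2^{[n]}$ satisfying
$$\sum_{i\in T} s_i \;=\; \sum_{i\in T'} t_i \;-\; \sum_{i\in A} t_i.$$
Thus the number of $T \subset [n]$ with $\sum_{i\in T} s_i \in [\alpha,\alpha+1)$ equals the number of $T' \subset [n]$ with $\sum_{i\in T'} t_i \in [\alpha',\alpha'+1)$ for $\alpha'=\alpha+\sum_{i\in A}t_i$. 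So I may as well assume $s_1,\dots,s_n \geq 1$.

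Next I would show that the family $\mathcal{F}:=\{T\subset[n]:\sum_{i\in T} s_i \in [\alpha,\alpha+1)\}$ is an antichain under inclusion. Indeed, if $T_1,T_2\in \mathcal{F}$ with $T_1\subsetneq T_2$, then
$$\sum_{i\in T_2}s_i - \sum_{i\in T_1}s_i \;=\; \sum_{i\in T_2\setminus T_1} s_i \;\geq\; 1,$$
since $T_2\setminus T_1\neq \emptyset$ and each $s_i\geq 1$. This contradicts both sums lying in an interval of length $1$.

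Finally, I would invoke Sperner's theorem, which states that any antichain in the Boolean lattice $2^{[n]}$ has size at most $\binom{n}{\lfloor n/2\rfloor}$; applied to $\mathcal{F}$ this yields the claimed bound. The asymptotic estimate $\binom{n}{\lfloor n/2\rfloor}=\Theta(2^n n^{-1/2})$ follows from Stirling's formula.

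There is no serious obstacle here; the only mildly delicate point is the sign-flipping reduction, which must be set up so that the bijection $T\mapsto T\triangle A$ preserves the cardinality of $\mathcal{F}$ while reducing to the all-positive case. Once this is in place, the antichain observation is immediate and Sperner finishes the argument.
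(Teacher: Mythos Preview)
Your argument is correct and is precisely the classical proof due to Erd\H{o}s: reduce to the all-positive case by the sign-flipping bijection $T\mapsto T\triangle A$, observe that the resulting family is an antichain in $2^{[n]}$, and apply Sperner's theorem. The paper itself does not prove this theorem at all; it is stated as a known tool with a citation to Erd\H{o}s, so there is nothing to compare against beyond noting that your write-up reproduces the original argument.
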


We now give a probabilistic interpretation of the previous theorem, which can also be found in \cite{JKLY}. For the sake of completeness, we include a proof of this result.

\begin{thm}\label{problitoff}
Fix non-zero parameters $a_1,a_2,\dots,a_n\in \mathbb{R}$ and $p_1,p_2,\dots,p_n\in [0.1,0.9]$. Suppose that $X_1,X_2,\dots, X_n$ are independent Bernoulli random variables with $X_i\sim Be(p_i)$. Then: $$\displaystyle\max_{x\in \mathbb{R}}\ \bP \bigg ( \displaystyle\sum_{i=1}^n a_iX_i=x \bigg ) =O(n^{-1/2}).$$
\end{thm}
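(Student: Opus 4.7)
The plan is to reduce to the symmetric case where Theorem \ref{loff} applies directly. First, I would normalise the problem: if $p_i > 1/2$, replacing $X_i$ by $1 - X_i$ flips the sign of $a_i$ and shifts the target $x$, without changing $\max_x \bP(\sum a_i X_i = x)$; so I may assume $p_i \in [0.1, 1/2]$ for every $i$. Also, rescaling all the $a_i$ by a common positive constant leaves the maximum unchanged, so I may further assume $\min_i |a_i| = 1$.

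The key step is the following splitting coupling: let $Y_i \sim Be(2p_i)$ and $Z_i \sim Be(1/2)$ be mutually independent for $i = 1, \ldots, n$ (this is a valid distribution as $2p_i \leq 1$), and set $X_i := Y_i Z_i$, so that $\bP(X_i = 1) = 2p_i \cdot \tfrac{1}{2} = p_i$ as required. Conditional on the vector $(Y_i)$, and writing $T = \{i : Y_i = 1\}$, we obtain the identity $\sum_i a_i X_i = \sum_{i \in T} a_i Z_i$, where the $Z_i$ for $i \in T$ are independent unbiased Bernoullis.

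Now I would apply Theorem \ref{loff}: for any $y \in \R$, the number of subsets $T' \subseteq T$ with $\sum_{i \in T'} a_i = y$ is at most $\binom{|T|}{\lfloor |T|/2 \rfloor} = O(2^{|T|}/\sqrt{|T|})$, so $\max_y \bP\bigl(\sum_{i \in T} a_i Z_i = y \mid (Y_j)\bigr) = O(|T|^{-1/2})$. Finally, $|T| = \sum_i Y_i$ has mean at least $0.2 n$, and a standard Chernoff bound yields $\bP(|T| < 0.1 n) = e^{-\Omega(n)}$, whence $\bE[|T|^{-1/2}] = O(n^{-1/2})$. Taking expectations of the conditional bound over $(Y_i)$ then gives $\max_x \bP(\sum_i a_i X_i = x) = O(n^{-1/2})$, as desired.

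The only subtle point is recognising that the splitting $X_i = Y_i Z_i$ converts the biased Bernoulli combination into an essentially balanced one on a random subset of linear size, to which the classical Erdős--Littlewood--Offord bound applies; the rest is routine.
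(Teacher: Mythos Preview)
Your proof is correct and follows essentially the same strategy as the paper's: both couple each $X_i$ to an unbiased Bernoulli $Z_i$ that is ``activated'' on a random index set of linear expected size, then apply Theorem~\ref{loff} conditionally and handle the small-set event by a tail bound. The only cosmetic differences are that you reduce to $p_i \le 1/2$ beforehand (so your coupling $X_i = Y_i Z_i$ is the paper's $z_i = 0$ case throughout) and use Chernoff rather than Chebyshev for the tail of $|T|$; neither changes the substance.
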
 

\begin{proof} For each $i\in [n]$ choose $w_i,z_i\in [0,1]$ such that $p_i=w_i/2+(1-w_i)z_i$. Then write $X_i$ as $X_i=W_iY_i+(1-W_i)Z_i$, where $W_i\sim Be(w_i),\ Z_i\sim Be(z_i)$ and $Y_i\sim Be(0.5)$ are independent random variables. We want to make this choice so that each $w_i\geq 0.2$ and we can do this by letting $z_i=0, w_i=2p_i$ if $p_i\leq 1/2$ and by letting $z_i=1, w_i=2(1-p_i)$ if $p_i>1/2$. 

We now condition on any choice $\C$ of the $W_i$’s and $Z_i$’s. Let $I=\{i\in [n]:W_i=1\}$ and suppose that we have $Z_i = z_i$ after the conditioning. On one hand, if $|I|\geq n/10$ then $\bP \big ( \sum_{i=1}^n a_iX_i=x\ |\ \C \big ) = \bP \big ( \sum_{i\in I} a_iY_i + \sum _{i \notin I} a_i z_i =x\ |\ \C \big )$ becomes $\bP \big ( \sum_{i \in I} a_iY_i =x_{\C} \big )$ where $x_{\C} = x - \sum _{i \notin I} z_i$ is a constant, which by Theorem \ref{loff} (eventually with a scaling argument) is at most $O(n^{-1/2})$. On the other hand, let $\ov{W}=(W_1+W_2+\dots+W_n)/n$ and observe that $|I|=n\ov{W}$. Moreover, we have $\bE[\ov{W}]\geq 0.2$ since $w_i\geq 0.2$ for each $i$. Therefore we deduce $\bP(|I|\leq n/10)=\bP(\ov{W}\leq 0.1)\leq \bP(\ov{W}-\bE[\ov{W}]\leq -0.1)\leq \bP(|\ov{W}-\bE[\ov{W}]|\leq -0.1)$. So we get by Chebyshev's Inequality that $\bP(|I|\leq n/10)\leq O(n^{-1})$. 

The conclusion follows by combining these two results in the Law of Total Probability.
\end{proof}

The following optimization results will be very useful along the way.

\begin{lem}\label{oprimization-map}
Let $b\geq a>0$ and let $0<\alpha<1$. Then the function $f:[0,a)\to\mathbb{R}$ given by $f(x):=(b+x)^\alpha + (a-x)^\alpha$ is strictly decreasing. In particular, for all $t\in (0,a)$ we have:
$$b^\alpha+a^\alpha> (b+a-t)^\alpha + t^\alpha .$$
\end{lem}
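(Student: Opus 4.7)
The plan is to prove strict monotonicity by analysing the derivative, which is the most direct route given that $f$ is smooth on the open interval $(0,a)$ and continuous at $0$. Differentiating termwise gives
\[
f'(x) = \alpha(b+x)^{\alpha-1} - \alpha(a-x)^{\alpha-1},
\]
so the sign of $f'$ is governed by comparing $(b+x)^{\alpha-1}$ and $(a-x)^{\alpha-1}$. Since $0 < \alpha < 1$, the exponent $\alpha - 1$ is negative, so the map $t \mapsto t^{\alpha-1}$ is strictly decreasing on $(0,\infty)$. It therefore suffices to verify that $b+x > a-x$ for $x \in (0,a)$, which is immediate from $b \geq a$ and $x > 0$ (so in fact $b+x \geq a+x > a-x$). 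This shows $f'(x) < 0$ for every $x \in (0,a)$.

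From here, the strict monotonicity of $f$ on the closed-at-zero interval $[0,a)$ follows by integration: for any $0 \leq x_1 < x_2 < a$,
\[
f(x_2) - f(x_1) = \int_{x_1}^{x_2} f'(t)\,dt < 0,
\]
because $f'$ is continuous and strictly negative on $(x_1,x_2) \cap (0,a)$, which has positive measure. (Note that at $x=0$ itself, $f'(0)$ might vanish when $b=a$, but this boundary point does not affect the integral argument.) Thus $f$ is strictly decreasing on $[0,a)$.

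For the "in particular" conclusion, the plan is simply a change of variable. Given $t \in (0,a)$, set $x := a - t \in (0,a)$. Then
\[
(b+a-t)^\alpha + t^\alpha = (b+x)^\alpha + (a-x)^\alpha = f(x),
\]
while $b^\alpha + a^\alpha = f(0)$. Since $x > 0$ and $f$ is strictly decreasing on $[0,a)$, we obtain $f(x) < f(0)$, which is exactly the stated inequality.

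I do not anticipate any real obstacle here: the only mildly subtle point is the boundary case $b = a$, where $f'(0) = 0$, so one cannot conclude strict monotonicity from the derivative at $0$ alone; the integral formulation (or equivalently, the strict concavity of $t \mapsto t^\alpha$ combined with the fact that $(b+x, a-x) \neq (b, a)$ as unordered pairs for $x > 0$) handles this cleanly.
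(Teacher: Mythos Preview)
Your proof is correct and follows essentially the same approach as the paper: compute $f'(x)=\alpha(b+x)^{\alpha-1}-\alpha(a-x)^{\alpha-1}$, use that $t\mapsto t^{\alpha-1}$ is decreasing and $b+x\geq a-x$, and then read off the ``in particular'' as $f(0)>f(a-t)$. You are in fact slightly more careful than the paper, which asserts $f'(x)<0$ on all of $[0,a)$ even though $f'(0)=0$ when $b=a$; your integral/MVT remark handling that boundary point is a nice touch, though not strictly needed since $f'<0$ on the open interval already forces strict monotonicity on $[0,a)$.
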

\begin{proof}
Note that $f^\prime(x)=\alpha(b+x)^{\alpha-1}-\alpha(a-x)^{\alpha-1}<0$ on the interval $[0,a)$ since $\alpha-1<0$ and $b+x\geq a-x>0$. This gives us the first part, whereas the second one is just $f(0)>f(a-t)$.
\end{proof}

\begin{lem}\label{log2ineq}
Let $a,b>0$. Then $a\log_2a+b\log_2b+2\min\{a,b\}\leq (a+b)\log_2(a+b)$. 
\end{lem}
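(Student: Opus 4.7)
The plan is to normalize by the total $a+b$ and reduce the statement to a single-variable inequality involving the binary entropy function. Without loss of generality assume $a\leq b$, and introduce the ratio $u := a/(a+b) \in (0,1/2]$, so that $b/(a+b) = 1-u$. Writing $a = u(a+b)$ and $b=(1-u)(a+b)$, a direct computation gives the identity
\begin{equation*}
(a+b)\log_2(a+b) - a\log_2 a - b\log_2 b = (a+b)\, H(u),
\end{equation*}
where $H(u) := -u\log_2 u - (1-u)\log_2(1-u)$ is the binary entropy. Since $2\min\{a,b\} = 2a = 2u(a+b)$, dividing the target inequality through by $a+b$ reduces it to showing
\begin{equation*}
H(u) \geq 2u \qquad \text{for all } u \in (0, 1/2].
\end{equation*}

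To establish this one-variable inequality, I would set $g(u) := H(u) - 2u$ and argue by concavity. A short computation yields
\begin{equation*}
g''(u) = -\frac{1}{\ln 2}\cdot \frac{1}{u(1-u)} < 0
\end{equation*}
on $(0,1)$, so $g$ is strictly concave on $(0,1/2]$. Moreover the endpoint values are $g(1/2) = H(1/2) - 1 = 0$ and $\lim_{u\to 0^+} g(u) = 0$. A concave function on a closed interval that vanishes at both endpoints is non-negative throughout the interval, so $g(u) \geq 0$ on $(0,1/2]$, which is exactly what we needed.

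There is no real obstacle here: the only thing to verify carefully is the entropy identity in the reduction step, which is a line of algebra, and the concavity-plus-endpoints argument is routine. The only minor point worth noting is the behaviour at $u\to 0^+$, where $H(u)\to 0$ by the standard limit $u\log_2 u \to 0$, so extending $g$ continuously to $u=0$ with $g(0)=0$ is harmless and lets the concavity argument apply on the closed interval $[0,1/2]$.
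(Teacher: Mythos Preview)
Your proof is correct and follows essentially the same approach as the paper: both normalize by $a+b$, reduce to the single-variable inequality $2t + t\log_2 t + (1-t)\log_2(1-t)\le 0$ on $(0,1/2]$ (equivalently your $H(u)\ge 2u$), and conclude by convexity/concavity together with the endpoint values at $0$ and $1/2$.
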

\begin{proof}
We may assume that $a\leq b$. Let now $x:=a+b$ and $a:=tx$ with $0<t\leq 1/2$. Upon dividing by $x$, the inequality we need to prove rewrites as $2t+t\log_2 (tx)+(1-t)\log_2((1-t)x)\leq \log_2 x$, which is equivalent to $2t+t\log_2 t+(1-t)\log_2(1-t)\leq 0$ for $0<t\leq 1/2$.\\
\indent The map $f:(0,\infty)\to\mathbb{R}$ given by $f(y)=y\log_2 y$ is convex and can be continuously extended to $f(0)=0$. Therefore the $LHS$ in our last inequality above is convex, so we only need to check that the inequality holds for $t=0$ and $t=1/2$, which can be easily seen.
\end{proof}

\vspace{6pt}
Finally, we require some classic concentration inequalities. See e.g. appendix A in \cite{alon04}.

\begin{thm}[\textbf{Chernoff Inequality}]\label{cher}
Let $X$ be a random variable with binomial distribution and let $\mu = \bE[X]$. Then, for $0\leq \delta\leq 1$, the following inequalities hold:
$$ \bP\big(X\leq (1-\delta )\mu \big)\leq \exp\left(-{\dfrac {\delta ^{2}\mu }{2}}\right).$$
$$ \bP\big(X\geq (1+\delta )\mu \big)\leq \exp\left(-{\dfrac {\delta ^{2}\mu }{4}}\right).$$
\end{thm}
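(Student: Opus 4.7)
The plan is to deploy the classical exponential moment method (Chernoff's own technique). Writing $X = \sum_{i=1}^n X_i$ as a sum of independent Bernoulli indicators with $\bP(X_i = 1) = p_i$ and $\mu = \sum_i p_i$, I would begin the upper-tail case with Markov's inequality applied to $e^{tX}$ for a parameter $t > 0$ to be optimised:
\[ \bP\bigl(X \geq (1+\delta)\mu\bigr) = \bP\bigl(e^{tX} \geq e^{t(1+\delta)\mu}\bigr) \leq e^{-t(1+\delta)\mu}\, \bE[e^{tX}]. \]
The key computation controls the moment generating function. By independence, $\bE[e^{tX}] = \prod_i \bigl(1 + p_i(e^t - 1)\bigr)$, and the elementary inequality $1 + x \leq e^x$ collapses this product into the clean bound $\bE[e^{tX}] \leq \exp\bigl(\mu(e^t - 1)\bigr)$. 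Substituting this back and optimising over $t$ (a short calculus exercise giving $t = \log(1+\delta)$) yields the standard form
\[ \bP\bigl(X \geq (1+\delta)\mu\bigr) \leq \left(\frac{e^\delta}{(1+\delta)^{1+\delta}}\right)^{\!\mu}, \]
and repeating the argument with $t = \log(1-\delta) < 0$ delivers the symmetric
\[ \bP\bigl(X \leq (1-\delta)\mu\bigr) \leq \left(\frac{e^{-\delta}}{(1-\delta)^{1-\delta}}\right)^{\!\mu}. \]

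The remaining step, and really the only non-routine part, is to verify that these two expressions are dominated by $\exp(-\delta^2\mu/4)$ and $\exp(-\delta^2\mu/2)$ respectively throughout $\delta \in [0,1]$. I would set $g(\delta) := \delta - (1+\delta)\log(1+\delta) + \delta^2/4$ and $h(\delta) := -\delta - (1-\delta)\log(1-\delta) + \delta^2/2$ and show $g, h \leq 0$ on $[0,1]$ by elementary calculus: both vanish at $\delta = 0$, and the Taylor expansion $\log(1 \pm \delta) = \pm \delta - \delta^2/2 \pm \delta^3/3 - \cdots$ combined with sign-tracking of $g'$ and $h'$ settles the matter. The generous constants $1/4$ and $1/2$ (rather than the sharper $1/3$ one sometimes sees for the upper tail) are exactly what make this final verification painless, so I do not anticipate any real obstacle beyond this one-variable bookkeeping.
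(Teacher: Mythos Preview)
Your proof is correct and follows the standard exponential moment argument. Note, however, that the paper does not actually prove this theorem: it is stated as a classical tool with a reference to appendix~A of \cite{alon04}, so there is no ``paper's own proof'' to compare against. Your argument is precisely the one found in such references, and the calculus verification you outline for $g$ and $h$ goes through cleanly (indeed $g''(\delta) = -1/(1+\delta) + 1/2 \leq 0$ and $h''(\delta) = -\delta/(1-\delta) \leq 0$ on $[0,1]$, forcing $g', h' \leq 0$ and hence $g, h \leq 0$).
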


The following bound will be useful for larger deviations.

\begin{thm}\label{binomial-bound}
Let $n\in \mathbb{N},\ p\in [0,1],\ L>0$ an let $X\sim Bin(n,p)$ be a random variable. Then: $$\bP(X\geq L)\leq \binom{n}{L}p^L\leq \left(\frac{enp}{L}\right)^L.$$ 
\end{thm}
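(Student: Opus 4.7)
The statement is a standard union bound paired with the usual factorial estimate, so in truth there is no serious obstacle; I describe how I would carry out the two inequalities.

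For the first inequality $\bP(X \geq L) \leq \binom{n}{L} p^L$, my plan is a straightforward union bound. Writing $X = X_1 + \cdots + X_n$ with the $X_i$ independent $\mathrm{Ber}(p)$ variables, I may assume without loss of generality that $L \in \mathbb{N}$ (otherwise replace $L$ by $\lceil L \rceil$, noting $\{X \geq L\} = \{X \geq \lceil L \rceil\}$). The event $\{X \geq L\}$ is then the union, over subsets $S \subseteq [n]$ of size $L$, of the events $E_S := \{X_i = 1 \text{ for all } i \in S\}$, and by independence $\bP(E_S) = p^L$. Taking a union bound over the $\binom{n}{L}$ such subsets yields the claim.

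For the second inequality $\binom{n}{L} p^L \leq (enp/L)^L$, I would combine two classical estimates. First, from the product form, $\binom{n}{L} = \frac{n(n-1)\cdots(n-L+1)}{L!} \leq \frac{n^L}{L!}$. Second, from the Taylor expansion $e^L = \sum_{k \geq 0} L^k/k! \geq L^L/L!$, one has $L! \geq (L/e)^L$. Multiplying the bound on $\binom{n}{L}$ by $p^L$ and applying the factorial estimate gives
$$\binom{n}{L} p^L \,\leq\, \frac{(np)^L}{L!} \,\leq\, \frac{(np)^L \cdot e^L}{L^L} \,=\, \left(\frac{enp}{L}\right)^L,$$
as required. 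The only mild subtlety in the whole argument is the non-integer case of $L$, which is immediately resolved by rounding up as above.
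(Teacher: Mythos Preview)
Your proof is correct and entirely standard. The paper itself does not supply a proof of this theorem; it is simply quoted as a tool in Section~2, so there is no argument to compare against. One minor quibble: your rounding-up remark for non-integer $L$ gives a bound with $\lceil L\rceil$ in place of $L$, which is not literally the stated inequality, but the paper explicitly suppresses floor and ceiling signs throughout and only ever applies the bound with integer $L$, so this is a non-issue.
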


Lastly, we will also require Hoeffding's inequality.

\begin{thm}[\textbf{Hoeffding's Inequality}]\label{hoeff}
Let $X_1,X_2,\dots, X_n$ be independent random variables such that $a_i\leq X_i\leq b_i$ for each $i\in[n]$, where $a_i,b_i\in \R$. Then given $t>0$, the random variable $S_n=X_1+\dots+X_n$ satisfies:
$$\bP\left(|S_n-\bE[S_n]|\geq t\right)\leq 2\emph{exp}\left(\dfrac{-2t^2}{\sum_{i\in[n]}(b_i-a_i)^2 }\right).$$
\end{thm}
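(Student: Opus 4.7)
The plan is to apply the standard Cramér--Chernoff moment generating function method. Setting $Y_i := X_i - \bE[X_i]$ and $T_n := S_n - \bE[S_n] = \sum_{i=1}^n Y_i$, each $Y_i$ is mean-zero and takes values in an interval of length $b_i - a_i$. For any $\lambda > 0$, Markov's inequality applied to $e^{\lambda T_n}$ yields
$$\bP(T_n \geq t) \leq e^{-\lambda t}\, \bE[e^{\lambda T_n}] = e^{-\lambda t} \prod_{i=1}^n \bE[e^{\lambda Y_i}],$$
where the factorisation uses independence of the $Y_i$. The task is thereby reduced to uniformly controlling each factor $\bE[e^{\lambda Y_i}]$.

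The key technical input is Hoeffding's lemma: if $Y$ is a mean-zero random variable supported in $[\alpha, \beta]$, then $\bE[e^{\lambda Y}] \leq \exp \bigl(\lambda^2 (\beta - \alpha)^2 / 8 \bigr)$ for every $\lambda \in \R$. I would prove this by first using convexity of the exponential to bound, for $y \in [\alpha, \beta]$,
$$e^{\lambda y} \leq \frac{\beta - y}{\beta - \alpha}\, e^{\lambda \alpha} + \frac{y - \alpha}{\beta - \alpha}\, e^{\lambda \beta}.$$
Taking expectations and using $\bE[Y]=0$ leaves $\bE[e^{\lambda Y}] \leq e^{\varphi(u)}$, where $u := \lambda(\beta - \alpha)$ and $\varphi$ is an explicit function with $\varphi(0) = \varphi'(0) = 0$. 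A short computation shows that $\varphi''(u)$ can be interpreted as the variance of a Bernoulli-type random variable taking two values differing by $1$, and is therefore bounded by $1/4$; Taylor's theorem with Lagrange remainder then gives $\varphi(u) \leq u^2/8$, which is the desired bound.

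Substituting Hoeffding's lemma back into the Markov estimate yields
$$\bP(T_n \geq t) \leq \exp\!\left(-\lambda t + \frac{\lambda^2}{8}\sum_{i \in [n]} (b_i - a_i)^2 \right).$$
Optimising over $\lambda > 0$ by choosing $\lambda := 4t / \sum_{i \in [n]} (b_i - a_i)^2$ produces the one-sided bound $\bP(T_n \geq t) \leq \exp \bigl(-2t^2 / \sum_{i \in [n]}(b_i - a_i)^2 \bigr)$. Applying the same argument to $-T_n$ (whose summands also satisfy the hypothesis with the same interval lengths) and taking a union bound doubles the right-hand side and delivers the stated two-sided inequality.

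The only step that requires real care is the sharp constant $1/8$ in Hoeffding's lemma; the exponential-moment tangent-line trick and the bound $\varphi'' \leq 1/4$ are the crucial observations, and everything else is routine application of independence, Markov's inequality, and optimisation.
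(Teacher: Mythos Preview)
Your argument is the standard and correct proof of Hoeffding's inequality via the Cram\'er--Chernoff method and Hoeffding's lemma. Note, however, that the paper does not actually prove this statement: Theorem~\ref{hoeff} is listed in the ``Tools'' section as a classical concentration inequality and is simply cited (with a pointer to the appendix of \cite{alon04}) without proof. So there is no proof in the paper to compare against; your write-up supplies precisely the textbook argument that the paper leaves to the reference.
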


\section{Degrees and distributions on the continuous cube}
\label{sec: degrees and distributions on the cube}

\subsection{Recasting the problem} 

Given a graph $G$ and a probability vector $\laur{p} = (p_v)_{v \in V(G)} \in [0.1, 0.9]^{V(G)}$ we will write $G(\laur{p})$ to denote the probability space on the set of induced subgraphs of $G$, determined by including each vertex $v \in V(G)$ independently with probability $p_v$. Equivalently, given $S \subset V(G)$, the induced subgraph $G[S]$ is selected with probability $\prod _{v \in S}p_v \prod _{v \in V(G) \setminus S} (1-p_v)$. Abusing notation slightly\footnote{As with the Erd\H{o}s--Renyi random graph $G(n,p)$.}, we will usually write $G(\laur{p})$ to denote a random graph $G[S] \sim G(\laur{p})$.

Throughout the paper, given a vertex $u \in V(G)$, we will we write $\textbf{u} \in \{0,1\}^{V(G)}$ to denote the \emph{neighbourhood vector} of $u$, which is given by: 
    \begin{eqnarray*}
        ({\bf u})_v =
        \begin{cases} 
            1 \quad \mbox{ if } uv \in E(G);\\
            0 \quad \mbox{ otherwise}.
        \end{cases}
    \end{eqnarray*}
\indent Note that, considering the standard inner product on ${\mathbb R}^{V(G)}$, given by ${\bf x}\cdot {\bf y} = \sum _{v\in V(G)}x_vy_v$, this notation leads us to the useful representation:
    \begin{eqnarray}
        \label{eqn: inner product expected degree}
        {\mathbb E}\big [ d_{G(\laur{p})} (u) \big ] = \bf u \cdot \laur{p} .
    \end{eqnarray}
\indent Our first lemma comes to show that two vertices whose expected degrees (under the distribution $G(\laur{p})$) are separated are unlikely to have the same degree in an induced subgraph selected according to $G(\laur{p})$.

\begin{lem}
    \label{lem: probdegcontrol} 
    Let $G$ be a graph and let $u, v$ be distinct vertices in $G$. Suppose that there is a probability vector $\laur{p}\in [0.1,0.9]^V$ such that $\big|\bE[d_{G(\laur{p})}(u)]-\bE[d_{G(\laur{p})}(v)]\big|\geq D \geq 2$. Then:  $$\bP\big(d_{G(\laur{p})}(u)=d_{G(\laur{p})}(v)\big) = O \bigg ( \frac {\sqrt{\log D}}{D} \bigg ).$$ 
\end{lem}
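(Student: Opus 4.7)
The plan is to reduce the problem to an anticoncentration question for a signed sum of independent Bernoullis, and then combine two standard inequalities in a complementary way. Define $Y := d_{G(\laur{p})}(u) - d_{G(\laur{p})}(v)$, so by (1) we have $\bE[Y] = ({\bf u} - {\bf v}) \cdot \laur{p}$ with $|\bE[Y]| \geq D$; WLOG assume $\bE[Y] \geq D > 0$. Writing $A := N(u) \sm N(v)$, $B := N(v) \sm N(u)$, and $X_w$ for the independent $\mathrm{Ber}(p_w)$ indicator that $w$ is sampled, we obtain the representation
$$Y = \sum_{w \in A} X_w - \sum_{w \in B} X_w,$$
a signed sum of $n := |A| + |B| = |\text{div}_G(u,v)|$ independent Bernoullis with coefficients $\pm 1$. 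Since each $p_w \leq 0.9$, we have the crude lower bound $n \geq \bE[Y]/0.9 \geq D$.

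Next, I would estimate $\bP(Y=0)$ in two complementary ways. On the one hand, Theorem \ref{problitoff} (probabilistic Littlewood--Offord) applied to $Y$ with its $n$ nonzero coefficients $\pm 1$ and probabilities $p_w \in [0.1,0.9]$ yields $\bP(Y = 0) = O(n^{-1/2})$. On the other hand, Hoeffding's inequality (Theorem \ref{hoeff}) applied to the bounded independent variables $\{X_w\}_{w\in A}\cup\{-X_w\}_{w\in B}$, each of range $1$, gives
$$\bP(Y = 0) \leq \bP\big(|Y - \bE[Y]| \geq \bE[Y]\big) \leq 2\exp\!\big(-2(\bE[Y])^2/n\big) \leq 2\exp\!\big(-2D^2/n\big).$$
Choosing the threshold $n^\ast := D^2/\log D$: if $n \leq n^\ast$, Hoeffding gives $\bP(Y=0) \leq 2e^{-2\log D} = 2/D^2$, which is $O(\sqrt{\log D}/D)$ for $D \geq 2$; if instead $n > n^\ast$, Littlewood--Offord gives $\bP(Y=0) = O(n^{-1/2}) = O(\sqrt{\log D}/D)$. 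Either way the desired bound follows.

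I do not anticipate any serious obstacle, since the argument is a direct synthesis of two standard tools. The only point worth noting is that neither bound suffices by itself: Littlewood--Offord alone gives only $O(n^{-1/2})$, which is much worse than the target $O(D^{-1})$ whenever $n \gg D^2$, while the exponential bound from Hoeffding loses traction as soon as $n$ exceeds $D^2$. The threshold $n^\ast = D^2/\log D$ is the natural balance point at which the two bounds coincide, and it produces precisely the $\sqrt{\log D}/D$ rate in the statement.
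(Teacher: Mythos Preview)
Your proposal is correct and follows essentially the same approach as the paper: write the degree difference as a signed sum of independent Bernoullis over $\mathrm{div}_G(u,v)$, bound $\bP(Y=0)$ both by the probabilistic Littlewood--Offord estimate $O(n^{-1/2})$ and by Hoeffding's inequality $\exp(-\Theta(D^2/n))$, and balance the two at $n \asymp D^2/\log D$. The only cosmetic differences are that you track the range of each summand as $1$ (yielding the slightly sharper $2\exp(-2D^2/n)$) whereas the paper uses the cruder range $[-1,1]$, and you fix the threshold explicitly rather than solving for the intersection point, but the argument is the same.
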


\begin{proof}
Set $W := \diver{u,v}$ and $T = |W|$; by hypothesis $T \geq 2$. Letting $X:= d_{G(\laur{p})}(u)-d_{G(\laur{p})}(v)$, this random variable can be written as $X=\sum_{w\in W}\pm X_w$ where $X_w \sim \mbox{Be}(p_w)$ are independent Bernoulli random variables.  We seek to upper bound $\bP\big(d_{G(\textbf{p})}(u)=d_{G(\textbf{p})}(v)\big) = {\mathbb P}(X = 0)$.

As $\big|\bE[X]\big|\geq D$ by our hypothesis, one gets by Hoeffding's Inequality that:
\begin{align*}
    \bP(X=0) 
        & \leq \bP\big(|X-\bE[X]|\geq D\big)
        \leq 
    2\text{exp}(-D^2/4T),\end{align*} 
since $X$ is a sum of $T$ independent random variables taking values in the interval $[-1,1]$. On the other hand, by Theorem \ref{problitoff} we get $P(X=0)=O\left(T^{-1/2}\right)$. Thus: 
    \begin{eqnarray}
        \label{eqn: joint upper bound on collision prob}
        {\mathbb P}\big ( X = 0 \big ) \leq \min \big \{ O(T^{-1/2}), 2\exp (-D^2 / 4T) \big \}. 
    \end{eqnarray}
\indent The map $x\mapsto 1/\sqrt{x}$ is decreasing on $(0,\infty)$, whereas $x\mapsto \text{exp}(-D^2/4x)$ is increasing, and their intersection point satisfies the equation $\sqrt{x}= \text{exp}(D^2/4x)$, i.e. $D^2= 2x\log x$. This gives $x= \Theta(D^2/\log (D))$ and we get the conclusion by substituting this into \eqref{eqn: joint upper bound on collision prob}. 
\end{proof}

Given a graph $G$, a probability vector $\laur{p} \in [0,1]^{V(G)}$ and $D > 0 $, a set $U \subset V(G)$ is said to be 
\emph{$D$-separated in $G(\laur{p})$} if $|{\mathbb E}[d_{G(\laur{p})}(u) ] - {\mathbb E}[d_{G(\laur{p})}(v) ] | \geq D  $ for all distinct $u, v \in U$.
    
In analogy with $f(G)$, define: 
    \begin{align*}
        {f}_{\laur{p}}(G) 
                := 
        \max \big \{ |U| : U \subset V(G) \mbox { such that } U \mbox{ is  1-separated in } G(\laur{p})\big \}.
    \end{align*}
\indent The next result shows a lower bound for $f(G)$ follows from a lower bound for $f_\laur{p}(G)$.

\begin{thm}
\label{thm: DExpD}
    Given a graph $G$ and a probability vector $\laur{p}\in [0.1,0.9]^{V(G)}$ with $f_\laur{p}(G) \geq 2$, the following relation holds:
        \begin{align*}
            f(G) = \Omega \bigg ( \frac {{f}_\laur{p}(G) }{ \log ^{3/2}\big ( {f}_\laur{p}(G) \big ) } \bigg ).
        \end{align*}
\end{thm}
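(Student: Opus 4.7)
The plan is to sample $G[S] \sim G(\laur{p})$ and show that with positive probability $G[S]$ already exhibits $\Omega(k / \log^{3/2} k)$ distinct degrees among a chosen set of vertices, where $k := f_\laur{p}(G) \geq 2$. Let $U = \{u_1, \ldots, u_k\}$ be a $1$-separated set witnessing $f_\laur{p}(G) = k$, indexed so that $\bE[d_{G(\laur{p})}(u_i)]$ is strictly increasing in $i$. The $1$-separation condition forces consecutive expected degrees to differ by at least $1$, and hence $\big| \bE[d_{G(\laur{p})}(u_j)] - \bE[d_{G(\laur{p})}(u_i)] \big| \geq j - i$ for all $i < j$.

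The first step is to bound the expected number of ``collisions''
\begin{align*}
    C := \big| \{ (i,j) : i < j,\ d_{G(\laur{p})}(u_i) = d_{G(\laur{p})}(u_j) \} \big|.
\end{align*}
For pairs with $j - i = 1$ I use only the trivial bound $\bP(\text{collision}) \leq 1$, contributing $O(k)$ in total. For pairs with $d := j - i \geq 2$, Lemma \ref{lem: probdegcontrol} applied with $D = d$ yields $\bP(\text{collision}) = O\big( \sqrt{\log d}/d \big)$. Since there are $k - d$ pairs at distance $d$, summing and comparing with $\int_2^k \sqrt{\log x}/x\, dx = \tfrac{2}{3}\big((\log k)^{3/2} - (\log 2)^{3/2}\big)$ gives $\bE[C] = O(k \log^{3/2} k)$.

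A subtle point is that the quantities $d_{G(\laur{p})}(u)$ are genuine degrees in $G[S]$ only when $u \in S$, so I also need a lower bound on $|U \cap S|$. Since $p_u \geq 0.1$ for every $u$, one has $\bE[|U \cap S|] \geq k/10$ and $|U \cap S| \leq k$, so reverse Markov gives $\bP(|U \cap S| \geq k/20) \geq 1/19$, while standard Markov gives $\bP(C \leq 20\,\bE[C]) \geq 19/20$. A union bound then produces a realisation $S^*$ of $S$ for which both $|U^*| := |U \cap S^*| \geq k/20$ and $C \leq O(k \log^{3/2} k)$ hold simultaneously.

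For this $S^*$ the values $\{ d^{S^*}_G(u) : u \in U^* \}$ are genuine degrees in $G[S^*]$. Grouping $U^*$ by equal degree into $m$ classes of sizes $t_1, \dots, t_m$, convexity yields $\sum_i \binom{t_i}{2} \geq \tfrac12 (|U^*|^2/m - |U^*|)$; since the left-hand side is at most $C$, rearranging gives $m \geq |U^*|^2 / (2C + |U^*|) = \Omega(k / \log^{3/2} k)$, and hence $f(G) \geq m = \Omega(k / \log^{3/2} k)$, as required. The main technical engine is Lemma \ref{lem: probdegcontrol}: the argument converts expected-degree gaps into small collision probabilities, and the $\log^{3/2}$ loss arises precisely from integrating the $\sqrt{\log D}/D$ bound across the scales $D \in [2, k]$, so any improvement in the conclusion would have to start from an improvement of that lemma.
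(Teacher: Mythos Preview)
Your proof is correct and follows essentially the same route as the paper's: use Lemma~\ref{lem: probdegcontrol} to bound collision probabilities, sum over all pairs to get $O(k\log^{3/2}k)$ expected collisions, and fix a realisation in which many vertices of $U$ land in $S$ while the collision count stays small. The only cosmetic differences are that the paper thins $U$ to every third vertex (so all gaps are $\geq 3$ and Lemma~\ref{lem: probdegcontrol} applies to every pair) rather than handling the $j-i=1$ pairs trivially, uses Chernoff rather than reverse Markov for $|U\cap S|$, and extracts the distinct-degree set via Tur\'an's theorem on the collision graph rather than your convexity bound on class sizes; none of these affects the structure or the final exponent.
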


\begin{proof}
First note that $f(G) \geq 1$ for every non-empty graph $G$, therefore we may assume that $L:= f_\laur{p}(G) \geq C$ for some absolute constant $C$. As above, we will write $G[S]$ to denote a random induced subgraph $G[S] \sim G(\laur{p})$.  Let $U \subset V(G)$ be a $1$-separated set in $G(\laur{p})$ with $U = \{u_1,u_2 \ldots, u_L\}$, so that the vertices are ordered with increasing expected degree in $G(\laur{p})$. It follows that if $j - i \geq 2$ then $D_{i,j}:={\mathbb E}[ d_{G(\laur{p})}(u_j)] - {\mathbb E}[ d_{G(\laur{p})}(u_i)] \geq j-i \geq 2$ and so we can apply Lemma \ref{lem: probdegcontrol} to obtain that:
$$\bP\big(d_{G(\laur{p})}(u_j)=d_{G(\laur{p})}(u_i)\big) \leq \frac {c \sqrt{\log (D_{i,j})}}{D_{i,j}} \leq  \frac {c \sqrt{\log (j-i)}}{j-i},$$
where here $c>0$ is an absolute constant. Here we used that $\sqrt {\log x }/ x$ is decreasing for $ x \geq 2$.

Now let us consider a random graph $H$ on $U_1 = \{u_3, u_6, \ldots, u_{3\lfloor L/3\rfloor }\}$, where we build an edge between two vertices if they have the same degree in $G[S] \sim G(\laur{p})$. The expected number of edges in $H$ is given by:
\begin{eqnarray*}
     \bE[e(H)] = 
\sum _{\{u_{3i}, u_{3j}\}  \subset U_1} \bP\big(d_{G(\laur{p})}(u_{3j})=d_{G(\laur{p})}(u_{3i})\big) 
& \leq & \sum _{\{u_{3i}, u_{3j}\}  \subset U_1} \frac {c\sqrt{\log (3j-3i)}}{3(j-i)} \\ 
& \leq & \dfrac{c L}{9} \sqrt {\log (L)} \cdot \bigg ( \sum _{d = 1}^{L/3}\frac {1}{d} \bigg ) \leq \dfrac{c L}{9} \log ^{3/2}(L ).
\end{eqnarray*}
\indent It follows by Markov that ${\mathbb P} \big (e(H) \leq cL \log ^{3/2}(L)/3 \big ) \geq 2/3$.

On the other hand, we have ${\mathbb E}[ | S \cap U_1| ] \geq |U_1|/10 \geq L/32$ and so by Chernoff's inequality, using that $L \geq C$, we have 
${\mathbb P}( | S \cap U_1| \geq L/64 ) \geq 2/3$.

Combining these two bounds guarantees that there exists an induced subgraph $G[S]$ with the property that the set $U_2 := S \cap U_1$ satisfies $|U_2| \geq L / 64$ and $e(H[U_2]) \leq e(H) \leq cL \log ^{3/2}(L)/3$. From Tur\'an's theorem we see that the subgraph $H[U_2]$ contains an independent set of order $\Omega ( 3|U_2|^2 / cL \log ^{3/2}(L) \big ) = 
\Omega \big (L / \log ^{3/2}(L) \big )$. By definition of $H$ such a set necessarily has distinct degrees in $G[S]$, thus completing the proof.
\end{proof}

\subsection{Moving to distributions}
\label{subsection: move to distributions}

The message we get from Theorem \ref{thm: DExpD} is that a lower bound on $f(G)$ for \emph{any} graph $G$ (up to logarithmic factors) follows from a lower bound on:
    \begin{align*}
        \widetilde {f}(G) := \max _{\laur{p} \in [0.1,0.9]^{V(G)}} f_\laur{p}(G).
    \end{align*}
\indent This second quantity can be perceived as a continuous relaxation of $f(G)\ -$ which trivially corresponds to maximizing over $\{0,1\}^{V(G)}$. However, from our point of view the second solution space is considerably richer, and in particular will allow different behaviours to be blended in a way that is not possible with vectors from the discrete cube; for example, one can take convex combinations of vectors in $[0,1]^{V(G)}$.

Although we would like to lower bound ${\widetilde f}(G)$, this quantity turns out to be just too rigid for certain inductive steps which we want to carry out later\footnote{See comment before Lemma \ref{lem: product-dist-lem} below.}. Instead, we introduce a generalised parameter, defined in terms of probability distributions on $[0.1,0.9]^{V(G)}$, which turns out to be more robust in this respect. 

Let $G$ be a graph and let ${\cal D}$ be a probability distribution on $[0.1, 0.9]^{V(G)}$. Given distinct vertices $u,v \in V(G)$ and a set $S\subset V(G)$, we define:
\begin{align}
    \label{eqn: bad-definition}
\baddd{D}{}{S}{u,v}:=\max_{c\in \mathbb{R}}\ \underset{\laur{p}\sim \mathcal{D}}{\bP}\big(|\bE[d_{G(\laur{p})}^S(u)]-\bE[d_{G(\laur{p})}^S(v)]-c|\leq 1\big).
\end{align}

\indent This quantity can be viewed as a small ball probability $-$ a measurement for two vertices $u,v \in V(G)$ of how likely the expected degrees to $S$  in $G(\laur{p})$ are to differ by an (almost) fixed amount. Given sets $U, S\subset V(G)$, we also set: $$\baddd {\cal D}{}{S}{U}:=\displaystyle\sum_{\{u,v\}\subset U}\baddd {\cal D}{}{S}{u,v}.$$
\indent Given another set $V \subset V(G)$ we can also write: 
$$\baddd {\cal D}{}{S}{U, V }:=\displaystyle\sum_{(u,v)\in U\times V}\baddd {\cal D}{}{S}{u,v}.$$
\indent We will sometimes suppress the superscript when $S = V(G)$, e.g. $\bad U = 
\baddd {\cal D}{}{V(G)}{U}$. Lastly, let us remark that in \eqref{eqn: bad-definition} we do not need $\mathcal{D}$ to be defined on all vertex coordinates of the set $[0.1,0.9]^{V(G)}$; any vertex set $T$ with $S\subseteq T\subseteq V(G)$ is enough so that we can define $\mathcal{D}$ on $[0.1,0.9]^T$, as we can see by looking at the RHS of \eqref{eqn: bad-definition}.

The following lemma shows that a lower bound on $f_\laur{p}(G)$ (and on $f(G)$ by Theorem \ref{thm: DExpD}) follows by finding a large subset $U \subset V(G)$ such that $\bad {U}$ is bounded in terms of $|U|$.

\begin{lem}
    \label{lem: bad-control-implies-distinct-expected-degrees}
    Let $G$ be a graph, let ${\cal D}$ be a probability distribution on $[0.1,0.9]^{V(G)}$ 
    and let $U \subset V(G)$ with $\bad U = \alpha \cdot |U|$. Then there is 
    $\laur{p} \in [0.1,0.9]^{V(G)}$ with
    $f_\laur{p}(G) \geq |U| / (1 + \alpha )$.
\end{lem}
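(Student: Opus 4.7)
The plan is to sample $\laur{p}\sim\mathcal{D}$, build a \emph{conflict graph} on $U$ from the resulting expected degrees, and then extract a $1$-separated set of the required size as a large independent set in that graph via Tur\'an's theorem. For each $\laur{p}\in[0.1,0.9]^{V(G)}$ let $H_\laur{p}$ be the graph on vertex set $U$ in which $u\neq v$ are joined by an edge precisely when $\bigl|\bE[d_{G(\laur{p})}(u)]-\bE[d_{G(\laur{p})}(v)]\bigr|<1$. Any independent set in $H_\laur{p}$ is, by definition, a $1$-separated subset of $U$ in $G(\laur{p})$, so $f_\laur{p}(G)\geq \alpha(H_\laur{p})$; it therefore suffices to find $\laur{p}$ for which $H_\laur{p}$ has many vertices and few edges.

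The edge count is controlled directly by the hypothesis on $\bad{U}$. Taking $c=0$ in the definition of $\baddd{D}{}{V(G)}{u,v}$ yields $\bP_{\laur{p}\sim\mathcal{D}}\bigl(uv\in E(H_\laur{p})\bigr)\leq \baddd{D}{}{V(G)}{u,v}$ for each pair $\{u,v\}\subset U$. Summing over pairs gives $\bE_{\laur{p}\sim\mathcal{D}}[e(H_\laur{p})]\leq \bad{U}=\alpha|U|$, so by averaging there is some $\laur{p}$ in the support of $\mathcal{D}$ (hence in $[0.1,0.9]^{V(G)}$) with $e(H_\laur{p})\leq \alpha|U|$; in particular $H_\laur{p}$ has $|U|$ vertices and average degree at most $2\alpha$. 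Tur\'an's theorem (Theorem~\ref{turan}) then produces an independent set of size at least $|U|/(1+2\alpha)$, i.e.\ a $1$-separated set of that size in $G(\laur{p})$, proving the lemma up to the constant in the denominator.

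The main obstacle is matching the precise denominator $1+\alpha$ stated in the lemma: the above Tur\'an route delivers only $1+2\alpha$. To close this factor of two one can exploit the restrictive structure of $H_\laur{p}$, which is a unit interval graph on the real values $\bE[d_{G(\laur{p})}(u)]$ and hence perfect, so $\alpha(H_\laur{p})$ can be estimated by a direct greedy sweep through the sorted expected degrees; alternatively one can combine Markov's inequality with a probabilistic thinning of $U$ to select a $\laur{p}$ attaining roughly half the expected number of conflict edges on the thinned set, recovering the sharper constant.
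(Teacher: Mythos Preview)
Your argument is essentially identical to the paper's: sample $\laur{p}\sim\mathcal{D}$, form the conflict graph on $U$, bound its expected edge count by $\bad{U}=\alpha|U|$, fix a $\laur{p}$ achieving at most this many edges, and apply Tur\'an's theorem (Theorem~\ref{turan}) to extract a large independent set, which is $1$-separated by construction.

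You are right to flag the constant: Tur\'an with $|U|$ vertices and at most $\alpha|U|$ edges gives average degree $\leq 2\alpha$ and hence an independent set of size $\geq |U|/(1+2\alpha)$, not $|U|/(1+\alpha)$. The paper's own proof simply asserts $|U|/(1+\alpha)$ at this step without further comment, so the apparent discrepancy is an imprecision in the paper rather than a gap in your reasoning. Since every application of the lemma in the paper only uses it up to constant factors, the distinction is immaterial.

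Your suggested repairs, however, are not convincing as written. The unit-interval-graph observation is correct, but perfection only yields $\alpha(H_\laur{p})\geq |U|/\omega(H_\laur{p})$, and bounding $\omega$ via the edge count (e.g.\ through $\binom{\omega}{2}\leq e(H_\laur{p})$ or a clique-cover convexity argument) again lands you at $|U|/(1+2\alpha)$, not better. The ``Markov plus thinning'' sketch is too vague to assess. I would recommend simply stating the bound as $|U|/(1+2\alpha)$ and noting that this suffices for all downstream uses.
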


\begin {proof}
    To see this, select $\laur{p} \sim {\cal D}$ and let $Y$ denote the random set:
        $$Y(\laur{p}) :=  \big \{ \{u,v\} \subset U: \big |\bE[d_{G(\laur{p})}(u)]-\bE[d_{G(\laur{p})}(v)] \big | \leq 1 \big \}.$$
    \indent Note that: 
        \begin{eqnarray*}
            \underset{\laur {p} \sim {\cal D}}{\mathbb E} \big [ |Y(\laur{p})| \big ]
                & = &
            \sum _{\{u,v\} \subset U} 
            {\mathbb P} \big ( \big |\bE[d_{G(\laur{p})}(u)]-\bE[d_{G(\laur{p})}(v)] \big | \leq 1 \big )\\ 
               & \leq & 
            \sum _{\{u,v\} \subset U} \bad {u,v} = \bad U = \alpha |U|.
        \end{eqnarray*}
    \indent It follows that there is a choice of $\laur{p} \in [0.1,0.9]^{V(G)}$ such that 
    $|Y(\laur{p})| \leq \alpha |U|$. Viewing the pairs in $Y(\laur{p})$ as the edges of 
    a graph $J$ on the vertex set $U$, again by Tur\'an's theorem we can find an independent set in this graph which has order $|U|/ (1 + \alpha )$. By definition of $J$, this gives a 
    lower bound on $f_\laur{p}(G)$, as required.
\end{proof}

From Theorem \ref{thm: DExpD}, the quantity $f(G)$ is (essentially) lower bounded by $f_\laur{p}(G)$. To close this subsection, and complete the circle, we show that that this also holds in the reverse direction. In particular, up to logarithms the quantities $f(G)$ and ${\widetilde f}(G)$ are of the same order of magnitude.

\begin{lem}
    \label{lem: reverse direction}
    Let $G$ be a graph and let $U\subset S \subset V(G)$ be vertex subsets such that 
    all vertices of $U$ have distinct degrees in $G[S]$. Then there is a 
    distribution ${\cal D}$ on $[0.1, 0.9]^{V(G)}$ such that 
    $\bad U = O \big ( |U| \log |U| \big )$. In particular, 
    there is $\laur{p} \in [0.1,0.9]^{V(G)}$ such that: 
   $$f_{\laur{p}}(G) = \Omega \bigg ( \frac {f(G)}{ \log f(G) } \bigg ).$$
\end{lem}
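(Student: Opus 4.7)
The plan is to construct a one-parameter family $\cal D$ concentrated on vectors that are constant across $S$ and constant across $V(G)\setminus S$. Specifically, I would draw $q$ uniformly from $[0.1,0.9]$ and set $\laur{p}=\laur{p}(q)$ by $p_w=q$ for $w\in S$ and $p_w=1/2$ for $w\in V(G)\setminus S$. This clearly gives $\laur{p}\in[0.1,0.9]^{V(G)}$, and for any two vertices $u,v\in U$ one has
\[
\bE[d_{G(\laur{p})}(u)]-\bE[d_{G(\laur{p})}(v)] \;=\; q\bigl(d^S(u)-d^S(v)\bigr) \;+\; \tfrac12\bigl(d^{V\setminus S}(u)-d^{V\setminus S}(v)\bigr),
\]
which is an affine function of $q$.

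The key observation is that, by hypothesis, the integers $d^S(u)$ for $u\in U$ are pairwise distinct, so the slope $d^S(u)-d^S(v)$ is a nonzero integer for every distinct pair in $U$. Hence for any constant $c\in\R$,
\[
\underset{\laur{p}\sim\mathcal D}{\bP}\Bigl(\bigl|\bE[d_{G(\laur{p})}(u)]-\bE[d_{G(\laur{p})}(v)]-c\bigr|\le 1\Bigr) \;\le\; \frac{2}{0.8\,|d^S(u)-d^S(v)|},
\]
so $\bad{u,v}=O(1/|d^S(u)-d^S(v)|)$. Ordering $U=\{u_1,\dots,u_{|U|}\}$ by increasing $d^S(u_i)$ forces $|d^S(u_j)-d^S(u_i)|\ge j-i$, and summing yields
\[
\bad{U} \;=\; \sum_{i<j}\bad{u_i,u_j} \;=\; O\!\left(\sum_{i<j}\frac{1}{j-i}\right) \;=\; O\bigl(|U|\log|U|\bigr),
\]
which is the first conclusion.

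For the ``in particular'' statement, I would take $S\subset V(G)$ and $U\subset S$ witnessing $|U|=f(G)$, apply the first part to produce $\cal D$ with $\bad{U}=\alpha|U|$ where $\alpha=O(\log f(G))$, and then invoke Lemma~\ref{lem: bad-control-implies-distinct-expected-degrees}, which delivers $\laur{p}\in[0.1,0.9]^{V(G)}$ with $f_{\laur{p}}(G)\ge |U|/(1+\alpha)=\Omega(f(G)/\log f(G))$.

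There is no real obstacle here: once one realises that it suffices to randomise a single scalar multiplier on the witnessing set $S$ (keeping $V\setminus S$ fixed at $1/2$ so that its contribution cancels in the difference), everything reduces to the trivial anti-concentration bound for a uniform random variable hitting an interval, combined with the harmonic sum. The mild conceptual point is the choice to mix only the $S$-coordinates and freeze the rest, which is what makes the distinct-degree hypothesis on $G[S]$ translate cleanly into a nonzero slope in $q$.
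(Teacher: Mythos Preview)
Your proof is correct and essentially identical to the paper's own argument: the paper also draws a single uniform scalar (written as $\alpha\in[-0.4,0.4]$ with $\laur{p}=\tfrac12\mathbf{1}+\alpha\mathbf{s}$, which is just your $q=\tfrac12+\alpha$), observes that the expected-degree difference is affine in this scalar with slope $d^S(u)-d^S(v)\ne 0$, bounds $\bad{u_i,u_j}\le 3/(j-i)$ after ordering, and sums to $O(|U|\log|U|)$ before invoking Lemma~\ref{lem: bad-control-implies-distinct-expected-degrees}. The only (harmless) imprecision is your phrase that the $V\setminus S$ contribution ``cancels in the difference'': it does not cancel but becomes a constant independent of $q$, which is absorbed into the $c$ in the definition of $\bad{u,v}$.
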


\begin{proof}
    To see this, let ${\bf s} \in \{0,1\}^{V(G)}$ denote the indicator vector of the 
    set $S$ and let $\bm{1}$ denote the constant $1$ vector. Let $U := \{u_1,u_2,\ldots , u_{|U|}\}$ and assume that 
    $d_{G[S]}(u_i)$ is increasing with $i$, which by \eqref{eqn: inner product expected degree} gives $({\bf u}_j - {\bf u}_i) \cdot {\bf s} \geq j-i$ for all $1 \leq i < j \leq |U|$.
    
    Select 
    $\alpha $ uniformly at random in $[-0.4, 0.4]$ and consider the random vector:
        \begin{align*}
            \laur{p} := \frac {1}{2} \cdot {\bf 1} + \alpha \cdot {\bf s} \in [0.1,0.9]^{V(G)}.
        \end{align*}
    \indent Write ${\cal D}$ for the resulting probability distribution on $[0.1,0.9]^{V(G)}$. 
   Given $\laur{p}$, by \eqref{eqn: inner product expected degree} we get:
        \begin{align*}
            {\mathbb E}[ d_{G(\laur{p})}(u_j)] - {\mathbb E}[ d_{G(\laur{p})}(u_i)] = 
            ({\bf u}_j - {\bf u}_i) \cdot \laur{p} = 
            \alpha \cdot \big ({\bf u}_j - {\bf u}_i \big ) \cdot {\bf s}  
            + c',
        \end{align*}
    for some fixed constant $c'$. As $\big ({\bf u}_j - {\bf u}_i \big ) \cdot {\bf s} \geq 
    j-i$ and $\alpha $ is uniformly chosen from $[-0.4,0.4]$, it follows 
    that ${\mathbb E}[ d_{G(\laur{p})}(u_j)] - {\mathbb E}[ d_{G(\laur{p})}(u_i)]$ is uniformly distributed over 
    an interval of length at least $0.8(j-i)$. By definition 
    \eqref{eqn: bad-definition}, this then gives: 
        \begin{align*}
            \bad {u_i, u_j} \leq \frac{2}{0.8(j-i)} \leq \frac {3}{j-i}.
        \end{align*}       
    \indent It follows that $\bad U = \displaystyle\sum _{1 \leq i<j \leq |U|} \bad {u_i, u_j} \leq 
    \displaystyle\sum _{d=1}^{|U|} \dfrac {3|U|}{d} \leq 6|U| \log |U|$, giving us the first bound. 
    The second then follows immediately from Lemma \ref{lem: bad-control-implies-distinct-expected-degrees}.
\end{proof}

\section{Building distributions for distinct degrees}
    \label{sec: building distributions}

From the previous section, via Theorem \ref{thm: DExpD} and Lemma \ref{lem: bad-control-implies-distinct-expected-degrees}, we know that in order to find many distinct degrees in a graph $G$ it suffices to find a large set $U \subset V(G)$ and a probability distribution ${\cal D}$ such that $\bad U$ is small. In this section we will collect a number of results together, which will be used in combination to exhibit such distributions ${\cal D}$.

From the `iterated' graph examples discussed in Section \ref{section: introduction} we saw that occasionally we must first find distinct degree sets $U_i$ in graphs $G[S_i]$ where $\{S_i\}_i$ are disjoint, and then combine these sets together so that $\bigcup _i U_i$ will have distinct degrees in $G[ \bigcup _i S_i]$. Unfortunately, it is also not hard to see that vertices within $U_i$ can easily agree in degree in the resulting union graph, even if we move from sets $U_i$ to vectors $\laur{p}_i$ as in Section \ref{sec: degrees and distributions on the cube}. 

While working with fixed sets or vectors can cause difficulties, our first lemma shows that the setting of distributions allows more flexibility here: we can combine distributions while maintaining 'bad' control. This flexibility was the key motivation for working in this more generalised setting (indicated in subsection \ref{subsection: move to distributions}).

\begin{lem}
\label{lem: product-dist-lem}
Let $G$ be a graph with vertex partition $V(G)= \bigsqcup_{i=1}^L V_i$ and for each $i\in [L]$ let $\mathcal{D}_i$ be a probability distribution on $[0,1]^{V_i}$. Then taking $\mathcal{D}$ to denote the product distribution $\Pi_{i\in [L]} \mathcal{D}_i$ on $[0,1]^{V(G)}$, for any distinct vertices $u,v\in V(G)$ and any set $S\subset V(G)$, one has: $$\baddd{D}{}{S}{u,v}\leq \min_{i\in [L]} \baddd{D}{i}{S\cap V_i}{u,v}.$$
\end{lem}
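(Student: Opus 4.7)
The plan is to exploit independence: under the product distribution $\mathcal{D} = \prod_i \mathcal{D}_i$, the coordinates $\laur{p}|_{V_i}$ are mutually independent. The quantity inside the definition of $\baddd{D}{}{S}{u,v}$ decomposes naturally along the partition, because for any $\laur{p} \in [0,1]^{V(G)}$ one has
\begin{align*}
\bE[d_{G(\laur{p})}^S(u)] - \bE[d_{G(\laur{p})}^S(v)]
= (\mathbf{u} - \mathbf{v}) \cdot \proj{S}{p}{}
= \sum_{i=1}^L X_i(\laur{p}),
\end{align*}
where $X_i(\laur{p}) := \sum_{w \in S \cap V_i} p_w(u_w - v_w)$ depends only on the coordinates of $\laur{p}$ indexed by $V_i$.

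First I would fix an arbitrary index $j \in [L]$ and an arbitrary $c \in \mathbb{R}$, with the aim of showing that
\begin{align*}
\underset{\laur{p} \sim \mathcal{D}}{\bP}\bigl(|X_1 + \cdots + X_L - c| \leq 1\bigr) \leq \baddd{D}{j}{S \cap V_j}{u,v}.
\end{align*}
Since $\mathcal{D}$ is a product distribution, the $X_i$'s are mutually independent. Conditioning on the values $X_i = x_i$ for $i \neq j$ and writing $c' := c - \sum_{i \neq j} x_i$, the conditional probability becomes
\begin{align*}
\underset{\laur{p}_j \sim \mathcal{D}_j}{\bP}\bigl(|X_j - c'| \leq 1\bigr),
\end{align*}
which by the definition \eqref{eqn: bad-definition} is at most $\baddd{D}{j}{S \cap V_j}{u,v}$ for every possible value of $c'$ (using the earlier remark that $\mathcal{D}_j$ being defined on $V_j \supseteq S \cap V_j$ suffices).

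Then I would apply the tower property / total probability law to integrate this uniform bound over the conditioning variables, yielding the inequality for that fixed $c$. Taking the maximum over $c$ on the left and the minimum over $j$ on the right gives the lemma. There is essentially no obstacle here beyond carefully checking that $X_j$ is indeed a function of $\laur{p}|_{V_j}$ only (so that it is measurable with respect to $\mathcal{D}_j$ alone) and that the partition $V(G) = \bigsqcup V_i$ refines the sum; both are immediate. The proof is really just the observation that small-ball probabilities can only shrink (not grow) when one adds an independent random variable.
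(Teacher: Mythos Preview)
Your proof is correct and follows essentially the same approach as the paper: decompose the expected-degree difference along the partition, condition on the coordinates outside a fixed block $V_j$ so that their contribution becomes a constant shift of $c$, and bound the remaining conditional small-ball probability by $\baddd{D}{j}{S\cap V_j}{u,v}$ via the definition. The paper conditions on the full vector $\laur{q}_j := \laur{p}|_{V(G)\setminus V_j}$ rather than just on the values $X_i$ for $i\neq j$, but this is an immaterial difference.
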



\begin{proof}
To see this, take $c\in \mathbb{R}$ and define $X$ to be the random variable: 
$$X(\laur{p}):=\bE[d_{G(\laur{p})}^{S}(u)]-\bE[d_{G(\laur{p})}^{S}(v)]-c.$$ It suffices to prove that $\underset{{\bf p}\sim \mathcal{D}}{\bP}\big(|X|\leq 1\big)\leq \baddd{D}{i}{S\cap V_i}{u,v}$ for all $i\in [L]$ as the result will follow from our definition of $\bad{u,v}$. Let $W_i:=V(G)\setminus V_i$ for each $i\in [L]$. Given $\laur{p}\in [0,1]^{V(G)}$, we denote by $\laur{p}_i$ and $\laur{q}_i$ its projections on $V_i$ and $W_i$, respectively, which are mutually independent.\\
\indent It is easy to see that:
$$X(\laur{p})=\bE[d_{G(\laur{p})}^{S\cap V_i}(u)]-\bE[d_{G(\laur{p})}^{S\cap V_i}(v)]+\bE[d_{G({ \laur{p}})}^{S\cap W_i}(u)]-\bE[d_{G({ \laur{p}})}^{S\cap W_i}(v)]-c.$$
\indent Conditioned on any choice for $\laur{q}_i$, we see that $\bE[d_{G(\laur{p})}^{S\cap W_i}(u)]-\bE[d_{G({ \laur{p}})}^{S\cap W_i}(v)]$ becomes a constant, therefore we obtain that:
$$\underset{{ \bf p}\sim \mathcal{D}}{\bP}\big(|X|\leq 1|\ \laur{q}_i\big)=\underset{{\bf p}_i\sim \mathcal{D}_i}{\bP}\big(|\bE[d_{G(\laur{p}_i)}^{S\cap V_i}(u)]-\bE[d_{G(\laur{p}_i)}^{S\cap V_i}(v)]-c^\prime|\leq 1\big)\leq \baddd{D}{i}{S\cap V_i}{u,v},$$ 
as $\laur{p}_i$ and $\laur{q}_i$ are independent. It follows that $\underset{{ \bf p}\sim \mathcal{D}}{\bP}\big(|X|\leq 1\big)\leq \baddd{D}{i}{S}{u,v}$, as desired. 
\end{proof}

Our second lemma gives a simple situation in which we can obtain `bad' control. Let $G$ be a graph and let $S \subset V(G)$. Let ${\cal U}_S$ denote the \textbf{\emph {uniformly constant distribution}} on $[0.1,0.9]^S$, given by selecting $\alpha \in [0.1,0.9]$ uniformly at random and setting $\laur{p}
= \alpha {\bf 1}_S \in [0.1,0.9]^{S}$.

\begin{lem}
    \label{lem: uniform-distribution-control}
        Let $G$ be a graph, $S \subset V(G)$ and $u, v \in V(G)$ such that 
        $d^S(u) \geq d^S(v) + D$ for some $D >0$. Suppose that 
        ${\cal U}_S$ denotes the uniform constant distribution on $[0.1,0.9]^{S}$, that 
        ${\cal D}'$ denotes a distribution on $[0.1,0.9]^{V(G)\setminus S}$ and that 
        ${\cal D}$ denotes the product distribution 
        ${\cal U}_S \times {\cal D}'$ 
        on $[0.1,0.9]^{V(G)}$. Then $\bad {u,v} \leq 3D^{-1}.$
\end{lem}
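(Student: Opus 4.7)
The plan is to reduce the small-ball probability to an anticoncentration statement for the single scalar variable $\alpha \in [0.1,0.9]$ governing ${\cal U}_S$, by conditioning away the randomness on $V(G) \setminus S$.

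First, using the inner-product identity \eqref{eqn: inner product expected degree}, I would write
\begin{align*}
\bE[d_{G(\laur{p})}(u)] - \bE[d_{G(\laur{p})}(v)]
    = ({\bf u}-{\bf v})\cdot \laur{p}
    = ({\bf u}-{\bf v})\cdot \proj{S}{p}{} + ({\bf u}-{\bf v})\cdot \proj{V(G)\setminus S}{p}{}.
\end{align*}
Under the product distribution ${\cal D} = {\cal U}_S \times {\cal D}'$, the first summand equals $\alpha \cdot (d^S(u) - d^S(v))$ for a uniform $\alpha \in [0.1,0.9]$, while the second summand is a random variable $Y$ that is independent of $\alpha$ (as it only depends on the $\laur{p}$-coordinates outside $S$). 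Setting $D' := d^S(u)-d^S(v) \geq D > 0$, this gives the decomposition
\begin{align*}
\bE[d_{G(\laur{p})}(u)] - \bE[d_{G(\laur{p})}(v)] - c = \alpha D' + (Y - c),
\end{align*}
with $\alpha$ and $Y$ independent.

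Next, I would fix $c \in {\mathbb R}$ and condition on $Y = y$. The event $\{|\alpha D' + (y-c)| \leq 1\}$ becomes the event that $\alpha$ lies in an interval of length $2/D'$. Since $\alpha$ is uniform on $[0.1,0.9]$ (length $0.8$, density $1.25$), the conditional probability is at most $\min\{1, 2.5/D'\}$. If $D \geq 3$ this yields a bound of $2.5/D' \leq 2.5/D \leq 3/D$; if $D < 3$ the desired bound $3/D > 1$ is trivial. Integrating over $Y$ preserves the bound, so for every $c$,
\begin{align*}
\underset{\laur{p}\sim\mathcal{D}}{\bP}\Big(\big|\bE[d_{G(\laur{p})}(u)] - \bE[d_{G(\laur{p})}(v)] - c\big| \leq 1\Big) \leq 3D^{-1}.
\end{align*}
Taking the maximum over $c$ then gives $\bad{u,v} \leq 3D^{-1}$, as required.

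There is essentially no real obstacle here — the argument is a one-variable anticoncentration estimate exploiting that ${\cal U}_S$ places a uniform density on a single scalar degree of freedom. The only bookkeeping is to verify the constant: the interval length on $\alpha$ is $2/D'$ and the density is $1/0.8$, producing the factor $2.5 \leq 3$, with the small-$D$ regime handled by triviality.
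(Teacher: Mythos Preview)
Your proof is correct and follows essentially the same route as the paper: both reduce the small-ball probability to the event that the single uniform scalar $\alpha$ lands in an interval of length $2/(d^S(u)-d^S(v))$, yielding the bound $2/(0.8D)\leq 3D^{-1}$. The only cosmetic difference is that the paper first invokes Lemma~\ref{lem: product-dist-lem} to pass to $\baddd{\cal U}{S}{S}{u,v}$, whereas you condition directly on the $V(G)\setminus S$ coordinates --- which is exactly what that lemma's proof does.
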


\begin{proof}
    First note that by Lemma \ref{lem: product-dist-lem} 
    we have $\bad {u,v} \leq \baddd {\cal U}{S}{S}{u,v}$
    and so it suffices to upper bound this second quantity. 
    
    Taking $c \in {\mathbb R}$, we seek to upper bound the probability that 
    $|{\mathbb E}[ d_{G(\laur{p})}^S(u)] - 
    {\mathbb E}[ d_{G(\laur{p})}^S(v)] - c | \leq 1$, where 
    $\laur{p} \sim {\cal U}_S$. To analyse this, note that: 
    \begin{align*}
    {\mathbb E}[ d_{G(\laur{p})}^S(u)] - 
    {\mathbb E}[ d_{G(\laur{p})}^S(v)] 
    = (\proj {S}{u}{} - \proj {S}{v}{}) \cdot 
    \laur{p}.
    \end{align*}
    \indent Since $\laur{p} = \alpha {\bf 1}_S$ where $\alpha $ is selected uniformly at random from [0.1,0.9], this gives: 
    \begin{align*}
    {\mathbb E}[ d_{G(\laur{p})}^S(u)] - 
    {\mathbb E}[ d_{G(\laur{p})}^S(v)]
    = (\proj {S}{u}{} - \proj {S}{v}{}) \cdot 
    \alpha {\bf 1}_S = \alpha \big (d^S(u) - d^S(v) \big ).
    \end{align*}
    \indent As $\alpha $ varies uniformly over the interval $[0.1,0.9]$ and $d^S(u) - d^S(v) \geq D$ by 
    hypothesis, the quantity
    ${\mathbb E}[ d_{G(\laur{p})}^S(u)] - 
    {\mathbb E}[ d_{G(\laur{p})}^S(v)]$ varies uniformly over an interval of length at least $0.8D$, giving that the probability that 
    $|{\mathbb E}[ d_{G(\laur{p})}^S(u)] - 
    {\mathbb E}[ d_{G(\laur{p})}^S(v)] - c | \leq 1$ is at most $2/(0.8D) \leq 3D^{-1}$.
\end{proof}

We next seek to provide 'bad' control for a set by blending neighbourhood structures together $-$ the idea here has some similarities to that of \cite{JKLY}. Let $G$ be a graph, let $U, S \subset V(G)$, where $U := \{u_1,\ldots, u_k\}$, and let $\beta\in[0,0.4]$. We now let ${\cal B}_\beta(U,S)$ denote the \textbf{\emph {blended probability distribution}} on $[0.1,0.9]^S$, which is defined as follows. First independently select $\alpha _i \in [-\beta,\beta]$ uniformly at random for each $i \in [k]$ and set: 
\begin{align}
    \label{eqn: pre truncated p}
\laur{p}' := \frac {1}{2} \cdot {\bf 1} + \sum _{i\in [k]} \alpha _i \cdot \proj {S}{u}{i} \in \mathbb{R}^{S}.
\end{align}
\indent Having made these choices, the distribution then returns $\laur{p}$, a truncated version of $\laur{p}'$, where:
    \begin{align*}
        \laur{p}_v =
        \begin{cases}
               \laur{p}_v' \quad \mbox{if } \laur{p}_v' \in [0.1,0.9];\\
               0.9 \quad \mbox{if } \laur{p}_v' >0.9;\\
               0.1 \quad \mbox{if } \laur{p}_v' < 0.1.
        \end{cases}
    \end{align*}

Our final lemma in this section provides 'bad' control for blended distributions under certain well-behaved situations. Given $D > 0$, $\gamma \in [0,1]$ and sets $U$ and $S$ as above we say that: 
\begin{itemize}[nosep]
    \item $U$ is \emph{$D$-diverse to $S$} if for all distinct $u,v \in U$ we have $
    |N_G^S(u) \triangle N_G^S(v)| \geq D$.
    \vspace{4pt}
    \item $U$ is  $\gamma $-\emph{balanced to }$S$ if for 
    all $v \in S$ we have 
    $d_G^{U}(v) \leq \gamma |U|$.
\end{itemize}

Let us quickly remark that $U$ is always $1$-\emph{balanced to} $S$.

\begin{lem}
    \label{lem: blended-distribution-control}
        Let $G$ be a graph, $D>0,\ \beta\in(0,0.1),\ \gamma\in(0,1]$ and $U, S \subset V(G)$ such that $U$ is both
        $D$-diverse and $\gamma $-balanced to $S$. Suppose that ${\cal D}'$ denotes a distribution on $[0.1,0.9]^{V(G)\setminus S}$, that
        ${\cal B}_\beta(U,S)$ is the blended probability distribution on $[0.1,0.9]^{S}$ 
         and that 
        ${\cal D}$ is the product distribution ${\cal B}_\beta(U,S) \times {\cal D}'$ 
        on $[0.1,0.9]^{V(G)}$. Then for all $u,v\in U$ one has:
                \begin{equation}\label{eqn:bad-eqn-control}
                    \bad {u,v} \leq \dfrac{2}{\beta D}+D\exp\left(\dfrac{-0.045}{\gamma\beta^2|U|}\right).
                \end{equation}
\end{lem}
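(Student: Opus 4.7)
The plan is to first invoke Lemma \ref{lem: product-dist-lem} to reduce the bound on $\bad{u,v}$ to the corresponding blended-distribution quantity on $[0.1,0.9]^S$ alone. Fix $c \in \mathbb{R}$ and set $W := (\proj{S}{u}{} - \proj{S}{v}{}) \cdot \laur{p}$, the truncated inner product we must control, together with its pre-truncation analogue $W' := (\proj{S}{u}{} - \proj{S}{v}{}) \cdot \laur{p}'$, where $\laur{p}'$ is the vector from \eqref{eqn: pre truncated p}. Since $W - W'$ depends only on coordinates in $N^S(u) \triangle N^S(v)$, on the event $E := \{\laur{p}'_w \in [0.1,0.9]\text{ for all } w \in N^S(u) \triangle N^S(v)\}$ we have $W = W'$. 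This yields the decomposition $\bP(|W - c| \leq 1) \leq \bP(|W' - c| \leq 1) + \bP(\bar E)$, which should produce the two summands of \eqref{eqn:bad-eqn-control}.

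For the anti-concentration term, expand $W'$ via \eqref{eqn: pre truncated p} as $\tfrac{1}{2}(d^S(u) - d^S(v)) + \sum_{i \in [k]} \alpha_i c_i$, where $c_i := (\proj{S}{u}{} - \proj{S}{v}{}) \cdot \proj{S}{u}{i}$. A direct calculation gives $c_u = |N^S(u) \setminus N^S(v)|$ and $c_v = -|N^S(v) \setminus N^S(u)|$, so $c_u - c_v = |N^S(u) \triangle N^S(v)| \geq D$ by $D$-diversity. WLOG $c_u \geq D/2$. Conditioning on all $\alpha_j$ with $j \neq u$, $W'$ becomes a shift of $c_u \alpha_u$ — uniform on an interval of length $2c_u \beta \geq \beta D$ — which immediately yields $\bP(|W' - c| \leq 1) \leq 2/(\beta D)$.

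For the truncation term, note that $\laur{p}'_w - \tfrac{1}{2} = \sum_{i:\, u_i \sim w} \alpha_i$ is a sum of at most $d^U(w) \leq \gamma|U|$ independent uniform$[-\beta, \beta]$ variables, which is exactly where $\gamma$-balancedness enters. Peeling off the contributions of $\alpha_u$ and $\alpha_v$ (each of absolute value at most $\beta$) shows that truncation at $w$ forces the remaining partial sum to exceed $0.4 - \beta$ in magnitude. Since $\beta < 0.1$ gives $0.4 - \beta > 0.3$, Hoeffding's inequality (Theorem \ref{hoeff}) then produces $\bP(w\text{ truncated}) \leq 2\exp\!\big(-0.045/(\gamma\beta^2|U|)\big)$; summing over $w \in N^S(u) \triangle N^S(v)$ and combining with the anti-concentration bound delivers \eqref{eqn:bad-eqn-control}.

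The main obstacle is calibrating the exponential constant $0.045$ precisely: a direct Hoeffding bound applied to the full sum $\laur{p}'_w-\tfrac12$ at threshold $0.4$ would yield only $\exp(-0.08/\cdots)$, which is twice too weak in the exponent. The peeling step that strictly uses $\beta < 0.1$ to replace the threshold $0.4$ by $0.4 - \beta > 0.3$, and thus realizes $(0.4 - \beta)^2/2 > 0.045$, is what produces the correct constant; organizing the randomness so the residual sum after peeling is genuinely independent of $\alpha_u$ and $\alpha_v$ is where the argument has to be executed most carefully.
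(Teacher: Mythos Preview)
Your decomposition $\bP(|W-c|\leq 1)\leq \bP(|W'-c|\leq 1)+\bP(\bar E)$ is clean, and the anti-concentration step for $W'$ is correct. The gap is in the truncation term: your event $E$ demands $\laur{p}'_w\in[0.1,0.9]$ for \emph{every} $w\in N^S(u)\triangle N^S(v)$, so the union bound gives $\bP(\bar E)\leq 2\,|N^S(u)\triangle N^S(v)|\exp(-0.045/(\gamma\beta^2|U|))$. But $D$-diversity only says $|N^S(u)\triangle N^S(v)|\geq D$; there is no upper bound, so this does not yield the factor $D$ in \eqref{eqn:bad-eqn-control}. You cannot shrink $E$ to a size-$D/2$ subset without breaking the identity $W=W'$ on $E$, which is what makes your splitting work in the first place.

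The paper avoids this by never passing to $W'$. Instead it fixes a set $Y\subset N^S(u)\setminus N^S(v)$ of size exactly $D/2$, conditions on the event that the $Y$-coordinates of $\laur{q}^u:=\laur{p}'-\alpha_u\proj{S}{u}{}$ lie in $[0.2,0.8]$, and then analyses the \emph{truncated} inner product $(\proj{S}{u}{}-\proj{S}{v}{})\cdot\laur{p}$ as a function of $\alpha_u$. The point is that this function is monotone non-decreasing in $\alpha_u$ regardless of truncation elsewhere (coordinates in $N^S(v)\setminus N^S(u)$ do not depend on $\alpha_u$, and coordinates in $N^S(u)$ can only increase), and on the good event the $Y$-coordinates alone contribute slope at least $D/2$. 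This is why only $D/2$ vertices need to be controlled, producing exactly the factor $D$. Separately, your ``main obstacle'' paragraph has the inequality backwards: applying Hoeffding directly to $\laur{p}'_w-\tfrac12$ at threshold $0.4$ gives $2\exp(-0.08/(\gamma\beta^2|U|))$, which is \emph{stronger} than the required $\exp(-0.045/\cdots)$, so no peeling is needed for the constant --- but this does not repair the union-bound issue.
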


\begin{proof}
    Suppose $U=\{u_1,u_2,\ldots,u_{k+1}\}$ and that for each $i\in [k+1]$, given the vector $\laur{p}'$ on $\mathbb{R}^S$ from \eqref{eqn: pre truncated p}, we define the random vector $\laur{q}^i$ on $\mathbb{R}^S$ by $\laur{q}^{i}:=\laur{p}^\prime-\alpha_i\cdot \proj {S}{u}{i}$. The key observation is that $\laur{q}^i$ is independent of $\alpha_{i}$. We will slightly abuse notation by writing $\laur{p}$ for both a vector in $[0.1,0.9]^{V(G)}$ and its projection $\proj{S}{\ul{p}}{}$ onto the coordinate set $S$. We can do this without much of a worry since $\mathcal{D}$ is the product distribution ${\cal B}_\beta(U,S) \times {\cal D}'$. \\
    \indent Fix $c\in \mathbb{R}$, $ i,j\in [k+1]$ and let $E_{i,j}(c)$ denote the event  $\big|\bE[d_{G(\laur{p})}(u_i)]-\bE[d_{G(\laur{p})}(u_j)]-c\big|\leq 1$. According to \eqref{eqn: bad-definition}, to prove the lemma it will suffice to show that: $$\bP(E_{i,j}(c))\leq \dfrac{2}{\beta D}+D\exp\left(\dfrac{-0.045}{\gamma\beta^2|U|}\right).$$ 
    \indent To upper bound $\bP(E_{i,j}(c))$, we might assume that $|N^S(u_i)\setminus N^S(u_j)|\geq |N^S(u_j)\setminus N^S(u_i)|$, so that $|N^S(u_i)\setminus N^S(u_j)|\geq D/2$. Pick a subset $Y_{i,j}\subset N^S(u_i)\setminus N^S(u_j)$ of size $D/2$. We call a vertex $v\in Y_{i,j}$ \emph{naughty} if $\laur{q}_v^i\notin[0.2,0.8]$. We say the set $Y_{i,j}$ is \emph{naughty} if it contains a naughty vertex and we let $F_{i,j}$ denote this event. By the law of total probability we get that: $$\bP\big(E_{i,j}(c)\big)=\bP\big(E_{i,j}(c)|F_{i,j}\big)\cdot \bP(F_{i,j})+\bP\big(E_{i,j}(c)|\ov{F_{i,j}}\big)\cdot \bP(\ov{F_{i,j}})\leq \bP(F_{i,j})+\bP\big(E_{i,j}(c)|\ov{F_{i,j}}\big).$$
\indent Let $v\in S$. Note that $\laur{q}_v^i$ is a sum of $ d_G^{U\setminus\{u_i\}}(v)$ uniform independent random variables, as the coordinates $\textbf{u}_{i_v}$ are non-zero when $v\sim u_i$. Thus by Hoeffding Inequality we get: 
$$\bP\big(\laur{q}_v^i\notin [0.2,0.8]\big)=\bP\big(|\laur{q}_v^i-1/2|>0.3 \big)
\leq 2\exp\left(\dfrac{-2\cdot 0.09}{4\beta^2 
d_G^{U\setminus \{u_i\} }(v) }\right) 
\leq 2\exp\left(\dfrac{-2\cdot 0.09}{4\beta^2 
\gamma |U| }\right),$$
where the final inequality uses that $d_G^{U\setminus \{u_i\} }(v) \leq d_G^U(v) \leq \gamma |U|$ as $U$ is 
$\gamma$-balanced to $S$. By the union bound we get that $\bP(F_{i,j})\leq |Y_{i,j}|\bP\big(\laur{q}_v^i\notin [0.2,0.8]\big)\leq D\exp\big(-0.045(\gamma\beta^2|U|)^{-1}\big)$. 

\indent To compute $\bP\big(E_{i,j}(c)|\ov{F_{i,j}}\big)$ we condition on any choice of $\bm{\alpha}:=(\alpha_{l})_{l\neq i}$ such that $F_{i,j}$ does not hold. Given such a choice, let us first see that $\laur{p}_v^\prime=\laur{q}^i_v+\alpha_i \textbf{u}_{i_v}\in [0.1,0.9]$ for all $v\in Y_{i,j}$ since $|\alpha_i|<0.1$. So none of the $Y_{i,j}$-coordinates of $\laur{p}^\prime$ will get truncated and recall that $\alpha_i$ is independent of $F_{i,j}$. Given a choice of $\bm{\alpha}$, consider now the following expression as a map of $\alpha_i$:  
\begin{align}
\label{eqn: difference of expected degrees analysis}    
f_c(\alpha_i):=\bE[d_{G(\laur{p})}(u_i)]-\bE[d_{G(\laur{p})}(u_j)]-c = (\textbf{u}_i-\textbf{u}_j)\cdot \laur{p} -c .
\end{align}
\indent Having conditioned on $\bm{\alpha}$ above, note that the event $E_{i,j}(c)$ holds only if $f(\alpha_i)$ lies in an interval of length $2$. However, as $\alpha _i$ increases, the contribution from each coordinate of $\laur{p}$ to the inner product on the right hand side of \eqref{eqn: difference of expected degrees analysis} is non-decreasing. Furthermore, the contribution of all of the $Y_{i,j}$-coordinates is exactly $\alpha _i$, since none of these coordinates were truncated from $\laur{p}^\prime$ as we have conditioned on $\overline{F_{i,j}}$. It follows that for $\varepsilon>0$: 
$$f(\alpha_i+\varepsilon)-f(\alpha_i)=\displaystyle\sum_{v\in V(G)}\big ((u_{i})_v-(u_{j})_v \big ) (u_{i})_v\cdot g_{\varepsilon,v}\geq \varepsilon|Y_{i,j}|=\varepsilon D/2,$$ 
where $g_{\varepsilon,v}\geq 0$  for all $v\in V(G)$ and $g_{\varepsilon,v}=\varepsilon $ for $v\in Y_{i,j}$. Therefore, conditioned on $\bm {\alpha }$ as above, if $E_{i,j}(c)$ occurs then $\alpha_i$ lies in an interval of length $4/D$. This happens with probability at most $2\beta^{-1}D^{-1}$ and the result in \eqref{eqn:bad-eqn-control} quickly follows from the law of total probability.
\end{proof}

Before we end this section, we define a simple but convenient distribution. Given a graph $G$ and a set $S\subset V(G)$, let ${\cal T}_S$ denote the \textbf{\emph{trivial $S$-induced probability distribution}}, which is simply the distribution on $[0.1,0.9]^S$ which selects the vector $\laur{p}_0 = \frac {1}{2} \cdot {\bf 1}_S$ with probability $1$.

\section{The Narayanan--Tomon conjecture}
\label{sec: N-T conjecture}

In this section we will prove Theorem \ref{thm: N-T-intro-version}, our approximate version of the Narayanan--Tomon conjecture. From Theorem \ref{thm: DExpD} and Lemma \ref{lem: bad-control-implies-distinct-expected-degrees} it will suffice to prove the following theorem.

\begin{thm}
    \label{thm: NT-distribution-version}
    Let $n \in {\mathbb N}$ and $k \geq 1$ with $n \geq 20000k^2$ and suppose that $G$ be an $n$-vertex graph with $\hom (G) \leq n/25k$. 
    Then there is a set $U \subset V(G)$ and a probability distribution ${\cal D}$ 
    on $[0.1,0.9]^{V(G)}$ such that: 
        \begin{align*}
            |U| 
                = 
            \Omega \bigg ( \frac {k}{\log_2 ^2(k+1)} \bigg ) 
                \quad 
            \mbox{ and } 
                \quad 
            \bad U = O \big (|U| \log |U| \big ). 
        \end{align*}
\end{thm}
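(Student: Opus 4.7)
The plan is to build the set $U$ and the distribution $\mathcal{D}$ in stages by partitioning $V(G) = \bigsqcup_{l=1}^L V_l$, and placing on each part $V_l$ either a blended distribution $\mathcal{B}_{\beta_l}(U_l, V_l)$ (from Lemma \ref{lem: blended-distribution-control}) or a uniform constant distribution $\mathcal{U}_{V_l}$ (from Lemma \ref{lem: uniform-distribution-control}). Taking $\mathcal{D}$ to be the product of these distributions, Lemma \ref{lem: product-dist-lem} ensures that for every pair $\{u,v\} \subset U$ we have $\bad{u,v} \leq \min_l \baddd{D}{l}{V_l}{u,v}$, so it is enough to guarantee that each pair of $U$ is witnessed by at least one part providing a small contribution.

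The first task is to use the hypothesis $\hom(G) \leq n/25k$ to extract a candidate pool of about $k$ vertices whose pairwise neighbourhood divergences $|\text{div}(u,v)|$ spread geometrically over scales from roughly $n/k$ up to $n$. A natural way to do this is to apply Tur\'an's theorem (Theorem \ref{turan}) to an auxiliary graph on $V(G)$ whose edges join pairs of vertices with small neighbourhood divergence: any clique in the auxiliary graph consists of vertices with nearly identical neighbourhoods, which after clustering would yield a homogeneous set of size exceeding $n/25k$, contradicting the hypothesis. The resulting Tur\'an independent set gives the required spread-out candidate pool.

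Next, the pairs of the candidate pool are processed according to the dyadic scale of their divergence. For a pair $\{u,v\}$ with $|\text{div}(u,v)| \in [2^j, 2^{j+1})$, we assign a part $V_l$ in which either (a) $u,v$ belong to an extended set $U_l$ that is $\Omega(2^j)$-diverse to $V_l$ and $\gamma_l$-balanced to $V_l$, so Lemma \ref{lem: blended-distribution-control} with $\beta_l$ tuned so that the exponential term is at most $2^{-j-5}$ yields $\baddd{D}{l}{V_l}{u,v} = O(\beta_l^{-1} 2^{-j})$; or (b) the degrees $d^{V_l}(u), d^{V_l}(v)$ already differ by $\Omega(2^j)$, in which case Lemma \ref{lem: uniform-distribution-control} directly gives $\baddd{D}{l}{V_l}{u,v} = O(2^{-j})$. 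Summing these per-pair bounds across all $\binom{|U|}{2}$ pairs and the $O(\log k)$ dyadic scales yields $\bad U = O(|U| \log |U|)$, while the size $|U| = \Omega(k / \log^2(k+1))$ emerges from two logarithmic losses: one from calibrating $\beta_l$ against the exponential term, and one from packing all dyadic scales into disjoint parts.

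The main obstacle is the $\gamma$-balanced hypothesis of Lemma \ref{lem: blended-distribution-control}. Iterated Tur\'an-type constructions illustrate that in general no large $U_l$ need be balanced to any large $V_l$: every moderately large candidate set can inherit a vertex of very high partition-degree on the remaining vertices, forcing $\gamma_l = \Theta(1)$ and thereby spoiling the exponential term. The resolution, signalled by the paper's two-stage structure, is to first establish the result in the easier regime $\hom(G) \geq n^{3/5+o(1)}$ (subsection \ref{subsection: proof in the larger regime}), where vertex abundance makes balancing essentially automatic, and then bootstrap this in the full regime (subsection \ref{subsection: proof in the entire regime}) by iteratively stripping away locally-unbalanced subgraphs via option (b)---the uniform constant distribution absorbs such subgraphs cleanly---and applying the easier version to the remaining balanced portion.
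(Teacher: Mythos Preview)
Your outline captures the high-level shape of the argument --- a product of blended and uniform-constant distributions on a partition, with Lemma~\ref{lem: product-dist-lem} combining the pieces, and the $\gamma$-balanced hypothesis of Lemma~\ref{lem: blended-distribution-control} as the central obstruction --- but the concrete steps you propose do not go through.

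The first gap is the Tur\'an argument on the small-divergence auxiliary graph. You claim that a clique there ``would yield a homogeneous set of size exceeding $n/25k$'', but this is false: a set of vertices with pairwise small $|\mathrm{div}(\cdot,\cdot)|$ has nearly identical neighbourhoods \emph{outward}, yet can carry an arbitrary edge pattern internally, so no homogeneous set follows. Even if you could bound the clique number, Tur\'an gives independent sets from low average degree, not from low clique number, so the deduction of a large candidate pool is unsupported. The paper does not try to harvest $k$ vertices in one shot; instead it takes a maximal $2k^{3/2}$-diverse set $V_0$ and uses it either directly (via Lemma~\ref{lem: div-approx}) or as a covering by divergence-balls $V_i$, whose structure drives a case split.

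The second gap is that the dyadic-divergence bookkeeping is not what actually organises the proof, and your sketch of the bootstrap (``iteratively stripping away locally-unbalanced subgraphs via option (b)'') is too loose to match what is required. The paper's induction does not strip by divergence scale; it finds a cluster centre $v_j$ of moderate degree with a large divergence-ball, uses a $3k$-subset $V$ of that ball as a separator (every vertex in $N(v_j)$ has high $V$-degree, every vertex outside has low $V$-degree), and recurses into $N(v_j)$ and its complement, combining via ${\cal U}_V$. For the hard regime $n = O(k^{5/2})$ there are further ingredients you omit entirely: a regularisation (Lemma~\ref{lem-reg}) giving $\Delta(H) \leq 5\log_2 n \cdot \delta(H)$, and then a case split on the diversity graph $J_\varepsilon(H)$ that either yields a randomly-sampled diverse and balanced set (making Lemma~\ref{lem: blended-distribution-control} applicable) or a system of neighbourhood-clusters $S_w$ with control sets $T_w$, on each of which Theorem~\ref{5.2-case} is invoked. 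None of this structure is visible in your plan.
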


\indent The proof will split into two regimes. The first deals with the case where $n = \Omega (k^{5/2})$ and the more difficult second case focuses on the regime $k^2 \leq n = O(k^{5/2})$.

\indent To begin, we first present a quick application of Lemma \ref{lem: blended-distribution-control} that guarantees `bad' control for a set which is $\Omega(k^{3/2})$-diverse.

\begin{lem}\label{lem: div-approx}
Let $G$ be a $n$-vertex graph and suppose that there is a set $U=\{v_1,v_2,\ldots,v_{k+1}\}$ of vertices of $G$ such that $|N(v_i)\triangle N(v_j)|\geq k^{3/2}+k$ for all $i\neq j$ in $[k+1]$. Then there is a probability distribution $\mathcal{D}$ on $[0.1,0.9]^{V(G)}$ such that $\bad{U}\leq 8|U|\log_2|U|$.
\end{lem}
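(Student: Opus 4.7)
The plan is to build $\mathcal{D}$ by directly applying the blended-distribution construction of Lemma~\ref{lem: blended-distribution-control} to the set $U$ itself, taking $S = V(G)$. Since the hypothesis gives $|N(v_i)\triangle N(v_j)| \ge k^{3/2}+k \ge k^{3/2}$ for every distinct pair $v_i, v_j \in U$, the set $U$ is $k^{3/2}$-diverse to $V(G)$; and it is trivially $1$-balanced to $V(G)$, so we may take $\gamma = 1$. For a parameter $\beta \in (0, 0.1)$ to be fixed, set $\mathcal{D} := \mathcal{B}_{\beta}(U, V(G))$ (formally paired with the trivial distribution on the empty complement). Lemma~\ref{lem: blended-distribution-control} then yields, for all distinct $v_i, v_j \in U$,
$$\bad{v_i,v_j} \;\le\; \frac{2}{\beta k^{3/2}} \;+\; (k^{3/2}+k)\,\exp\!\left(\frac{-0.045}{\beta^{2}(k+1)}\right).$$

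The key technical step is then to balance the two summands. I would set $\beta^{2} := c/\bigl((k+1)\log k\bigr)$ for a sufficiently small absolute constant $c > 0$ (for instance, $c = 0.01$ works). With this choice the exponent becomes $-\Omega(\log k)$, with an implicit constant large enough to make the second term at most $(k^{3/2}+k)\cdot k^{-3} = O(k^{-3/2})$, while the first term simplifies to $\Theta\!\bigl(\sqrt{\log k}/k\bigr)$. Hence each pairwise badness satisfies $\bad{v_i,v_j} = O\bigl(\sqrt{\log k}/k\bigr)$, and summing over the $\binom{k+1}{2}$ pairs in $U$ yields
$$\bad{U} \;=\; O\!\bigl(k\sqrt{\log k}\bigr),$$
which is comfortably bounded by $8|U|\log_{2}|U| = 8(k+1)\log_{2}(k+1)$ once $k$ exceeds an absolute threshold. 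For the finitely many smaller values of $k$ (including those where $\beta < 0.1$ may fail) the trivial estimate $\bad{v_i,v_j} \le 1$ already gives $\bad{U} \le \binom{k+1}{2} \le 8(k+1)\log_{2}(k+1)$.

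The main obstacle is the tension between the two terms produced by Lemma~\ref{lem: blended-distribution-control}: shrinking $\beta$ suppresses the exponential truncation term (by keeping most coordinates of $\laur{p}'$ inside $[0.1, 0.9]$) but simultaneously inflates the Erd\H{o}s--Littlewood--Offord anticoncentration term $2/(\beta D)$, while enlarging $\beta$ has the reverse effect. The optimal rate $\beta = \Theta\bigl(1/\sqrt{k\log k}\bigr)$ is forced by this trade-off, and it is precisely the $k^{3/2}$ level of divergence in the hypothesis that allows both terms to be driven below $O(\log k / k)$ per pair; any significantly weaker divergence hypothesis, such as $|N(v_i)\triangle N(v_j)| = \Omega(k)$, would not permit the two terms to be balanced low enough to match the target $\bad{U} = O(|U|\log|U|)$.
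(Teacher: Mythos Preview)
Your proposal is correct and follows essentially the same approach as the paper: both take $\mathcal{D}=\mathcal{B}_\beta(U,V(G))$ with $\gamma=1$ and $\beta=\Theta\bigl(1/\sqrt{(k+1)\log k}\bigr)$, then apply Lemma~\ref{lem: blended-distribution-control} and sum over pairs. The only cosmetic difference is that the paper fixes the explicit constant $\beta^{-1}=\sqrt{56(k+1)\log(k+1)}$ and tracks exact inequalities throughout, whereas you use asymptotic notation and dispose of small $k$ via the trivial bound $\bad{v_i,v_j}\le 1$.
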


\begin{proof}
Let $S:=V(G)$ so that $|N^S(v_i)\triangle N^S(v_j)| =  |\text{div}(v_i,v_j)|\geq k^{3/2} + k$ for all $i\neq j$ in $[k+1]$. Therefore $U$ is both $(k^{3/2}+k)$-diverse and $1$-balanced to $S$. We let $\mathcal{D} := 
{\cal B}_\beta(U,S)$, where we set $\beta^{-1}:=\sqrt{56(k+1)\log(k+1)}$, and apply Lemma \ref{lem: blended-distribution-control} to obtain for all $i \neq j$ that:
\begin{align*}
\bad{v_i,v_j} &\leq 4\sqrt{14}\cdot \dfrac{\log^{1/2}(k+1)}{k}+(k^{3/2}+k)\cdot \exp\big(-2.52 \log(k+1)\big) \\ 
& \leq k^{-1}\big(4\sqrt{14}\log^{1/2}(k+1)+1\big) .
\end{align*}
\indent As the map $f:[1,\infty)\to \mathbb{R}$ given by $f(x):=16\log_2(x+1)-4\sqrt{14}\log^{1/2}(x+1)-1$ is increasing and positive at $1$, we can now easily deduce that for all $i\neq j$ one has: 
$$\bad{v_i,v_j}\leq k^{-1}\big(4\sqrt{14}\log^{1/2}(k+1)+1\big)\leq 16k^{-1}\log_2(k+1).$$
\indent By summing over all $i\neq j$ in $[k+1]$ we finally deduce that:
\begin{align*}\bad{U}\leq \frac{16\log_2(k+1)}{k}\cdot \dbinom{k+1}{2}=8|U|\log_2|U|,
\end{align*}as required.\end{proof}

\subsection{The case when \texorpdfstring{$n = \Omega (k^{5/2})$}{n is bigger than k to 5/2}} 
\label{subsection: proof in the larger regime}

The next result controls `bad' under the assumption that $G$ has bounded maximum degree.

\begin{lem}\label{lem: bded-deg}
Let $n\in {\mathbb N}, x\in [1,\infty )$ and suppose that $G$ is an $n$-vertex graph with $n\geq 25x\cdot \Delta (G)$ and $\hom (G) \leq n/5x$. Then there is a probability distribution $\mathcal{D}$ on $[0.1,0.9]^{V(G)}$ and a vertex set $U\subset V(G)$ with $|U| =  \lceil x \rceil +1$ such that $\bad{U}\leq |U|$.
\end{lem}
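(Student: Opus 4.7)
My plan is to find $\lceil x\rceil+1$ vertices and a subset $S\subseteq V(G)$ in which their degrees are sufficiently spread out, and then invoke Lemma~\ref{lem: uniform-distribution-control} with the uniformly constant distribution $\mathcal{U}_S$ on $[0.1,0.9]^S$. The bound $\bad{U}\leq|U|$ is strictly stronger than the $O(|U|\log|U|)$ one would obtain from Lemma~\ref{lem: reverse direction} or from Lemma~\ref{lem: div-approx}, so the key will be to ensure not merely distinct but well-spaced $d^S$-values, giving an improvement over the generic logarithmic bound.

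First, the hypothesis $\hom(G)\leq n/(5x)$ yields $\alpha(G)\leq n/(5x)$, so Theorem~\ref{turan} produces the lower bound $\bar{d}(G)\geq 5x-1$ on the average degree. Combined with the upper bound $\Delta(G)\leq n/(25x)$, a standard double-counting argument then shows that the set $H:=\{v\in V(G):d(v)\geq 2x\}$ has size at least $50x^2$: indeed, if $|H|=k$ then $\sum_v d(v)\leq k\cdot n/(25x)+(n-k)\cdot 2x$, which together with $\sum_v d(v)\geq(5x-1)n$ rearranges to $k\geq 50x^2$.

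Next I will locate $|U|=\lceil x\rceil+1$ vertices $v_0,v_1,\ldots,v_{\lceil x\rceil}$ and a subset $S\subseteq V(G)$ such that
\[
d^S(v_0)<d^S(v_1)<\cdots<d^S(v_{\lceil x\rceil}),\qquad d^S(v_{i+1})-d^S(v_i)\geq C:=\lceil 3(\log|U|+1)\rceil \text{ for all } i.
\]
The condition $n\geq 25x\Delta(G)$ together with $\Delta(G)\geq\bar{d}(G)\geq 5x-1$ from Step~1 guarantees enough room in the degree spectrum to accommodate $|U|$ values with spacing $C$. In the generic case $S=V(G)$ suffices and the $v_i$'s can be drawn greedily from $H$ (whose degrees span a range of size $\geq Cx$); in the harder case where $G$ is close to regular, one instead takes $S$ to be a carefully chosen substructure (for instance the neighbourhood of a suitable reference vertex), so that the restricted degrees $d^S(v)=|N(v)\cap S|$ acquire the required variation across $v$.

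Finally, set $\mathcal{D}:=\mathcal{U}_S\times\mathcal{T}_{V(G)\setminus S}$. By Lemma~\ref{lem: uniform-distribution-control}, for every pair $i<j$ we have
\[
\bad{v_i,v_j}\leq\frac{3}{d^S(v_j)-d^S(v_i)}\leq\frac{3}{C(j-i)},
\]
and summing over the $\binom{|U|}{2}$ pairs,
\[
\bad{U}\leq\frac{3}{C}\sum_{1\leq i<j\leq|U|}\frac{1}{j-i}\leq\frac{3|U|(\log|U|+1)}{C}\leq|U|,
\]
by the choice of $C$. The main obstacle is the third paragraph, namely guaranteeing the requisite $d^S$-spacing---in particular handling the near-regular case by producing enough variation through a non-trivial choice of $S$. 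The condition $n\geq 25x\Delta(G)$, via the resulting large gap between $\bar{d}(G)$ and $\Delta(G)$, is precisely what provides the combinatorial flexibility needed to carry this selection through.
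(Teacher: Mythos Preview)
Your proposal has a genuine gap in the third paragraph, which you yourself flag as ``the main obstacle'' --- and it is not a detail that can be filled in along the lines you sketch. The issue is quantitative: with a \emph{single} control set $S$, the degrees $d^S(v)$ lie in $[0,\Delta(G)]$, so the total spread available is at most $\Delta(G)$. But your scheme requires $\lceil x\rceil+1$ values with consecutive gaps of size $C\approx 3\log|U|$, hence a spread of order $x\log x$. The hypothesis permits $\Delta(G)$ to be as small as $\bar d(G)\approx 5x$ (indeed Tur\'an only forces $\Delta(G)\geq 5x-1$), and a $5x$-regular graph with $\hom(G)\leq n/5x$ and $n\approx 125x^2$ is perfectly consistent with the hypothesis. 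For such a graph no single $S$ --- whether $V(G)$, a neighbourhood $N(w)$, or anything else --- can separate $x+1$ vertices by gaps of order $\log x$, since $d^S(v)\leq\Delta(G)\approx 5x<3x\log x$ once $x$ is moderately large. Your ``generic case / harder case'' dichotomy does not escape this, and the suggestion $S=N(w)$ suffers from the identical range bound.

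The paper's proof sidesteps exactly this obstruction by abandoning a single $S$ in favour of $k$ \emph{disjoint} control sets $Y_1,\ldots,Y_k$, one per vertex: each $Y_i\subset N(u_i)$ has size $2k$, and a cleaning step guarantees that every later $u_j$ (with $j>i$) satisfies $d^{Y_i}(u_j)\leq k/2$, so $d^{Y_i}(u_i)-d^{Y_i}(u_j)\geq 3k/2$. Taking $\mathcal{D}$ to be the product $\prod_i\mathcal{U}_{Y_i}$ and invoking Lemma~\ref{lem: product-dist-lem} lets one control each pair $\{u_i,u_j\}$ via the best available $Y_i$, yielding $\bad{u_i,u_j}\leq 2/k$ uniformly --- no harmonic sum, no logarithm. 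The total is then exactly $\binom{k+1}{2}\cdot\tfrac{2}{k}=k+1=|U|$. The essential idea you are missing is this use of many small disjoint control sets combined through a product distribution, which decouples the pairs and removes the need for any global degree spread.
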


\begin{proof}
Set $k = \lceil x\rceil $, noting that $x \leq k < 2x$. We will first select $U = \{u_1,\ldots, u_{k+1}\}$ step by step over a series of rounds. To do so, we are going to select a `control' set $Y_i$ for each $u_i\in U$, so that $u_i$ is strongly joined to $Y_{i}$, but any $u_j\neq u_i$ in $U$ with $j>i$ is quite weakly joined to $Y_i$. This property will allow us to separate the expected degrees of vertices in $U$ and build the distribution $\mathcal{D}$.

We inductively construct vertex sets $U_i=\{u_1,u_2,\ldots,u_i\}$, $V_i$ and $Y_i$ for $i\in [k]$ so that:
\begin{enumerate}[(i),nosep]
    \item the sets $U_i$, $\{Y_j\}_{j\leq i}$ and $V_i$ are all pairwise disjoint;
    \item $u_{i} \in V_{i-1}$ for all $i\in [2, k]$;
    \item $d^{Y_i}(u_i)=|Y_i|=2k$;
    \item $d^{Y_i}(v) \leq k/2$ for all vertices $v \in V_i$;
    \item $|V_i| \geq n - 5i \Delta (G)$.
\end{enumerate} 

To begin, we set $U_0 = Y_0 = \emptyset $ and $V_0:=V(G)$. Suppose now $i \in [k] $ and that we have found $U_{i-1}, V_{i-1}$ and $\{Y_j\}_{j< i}$ as above and wish to find these sets for $i$. We look at $G_{i}:=G[V_{i-1}]$ and see that it must have a vertex $u_i$ with $d_{G_{i}}(u_i)\geq 2k$; in particular $\Delta(G) = \Delta (G_1) \geq 2k$. If not, then $\Delta(G_{i})\leq 2k-1$ and so by Tur\'an's Theorem we obtain an independent set in the subgraph $G_{i}$ which has size at least $|V_{i-1}|/2k\geq (n- 5(i-1)\Delta (G))/(2k) > (n-5x\Delta (G))/(4x) \geq n/5x$, contradicting the $\text{hom}(G)$ condition from our hypothesis. We now let $U_i:=U_{i-1}\cup \{u_i\}$ and we pick a subset $Y_i\subset N_{G_i}(u_i)$ of size $2k$. We then define the set $Z_i:=\{v\in V_{i-1}:d_{G_i}^{Y_i}(v)\geq k/2\}$ and note that $u_i\in Z_i$. We now let $V_i:=V_{i-1}\setminus(Y_i\cup Z_i)$. Observe that by construction (i)-(iv) hold above, and it just remains to show (v).  

As $|V_i| = |V_{i-1}| - |Y_i \cup Z_i|$, by induction it is enough to show that $|Y_j\cup Z_j|\leq 5 \Delta (G)$. Clearly $|Y_i| \leq 2k$. We bound $|Z_i|$ by double counting the number of edges between $Z_i$ and $Y_i$. From each $z\in Z_i$ there are at least $k/2$ edges going to $Y_i$, hence $e(Y_i,Z_i)\geq k|Z_i|/2$. However, from each $y\in Y_i$ there are at most $\Delta(G)$ edges going to $Z_i$, thus $e(Y_i,Z_i)\leq 2k\Delta(G)$. It follows that $|Z_i|\leq 4 \Delta (G)$ and so $|Z_j\cup Y_j|\leq |Z_j|+2k \leq 4 \Delta (G) + 2k \leq 5 \Delta (G)$, as required.

To complete the proof of the lemma, we set $i := k$ and take $u_{k+1} \in V_{k} \neq \emptyset $. By using (i)-(iv) above we get disjoint sets $U = U_{k} \cup \{u_{k+1}\} = \{u_1,\ldots, u_{k+1}\}$ and $\{Y_j\}_{j\in [k]}$ such that: 
\begin{align}
    \label{eqn: degree-split}
        d_{Y_i}(u_i) \geq d_{Y_i}(u_j) + 3k/2\ \ \ \text{for all }i<j.
\end{align} 
 \indent For each $i\in [k]$ we let ${\cal D}_i$ denote the uniformly constant distribution on $Y_i$, i.e. ${\cal D}_i := {\cal U}_{Y_i}$. Taking $Y_0 := V(G) \setminus (\cup _i Y_i)$, we also let ${\cal D}_0 := {\cal T}_{Y_0}$ denote the trivial distribution induced by the set $Y_0:= V(G) \setminus (\cup _i Y_i)$ (as defined at the end of Section \ref{sec: building distributions}). Lastly, we take ${\cal D}$ to be the product distribution ${\cal D} := \prod _{i\in [0,k]}{\cal D}_i$ on $[0.1,0.9]^{V(G)}$. Note that from Lemma \ref{lem: product-dist-lem}, equation \eqref{eqn: degree-split} and Lemma \ref{lem: uniform-distribution-control}, for all $i<j$ we obtain that:
    \begin{align*}
        \bad {u_i, u_j} 
            \leq 
        \baddd {D}{i}{Y_i}{u_i, u_j}
            \leq 
        \frac {3}{(3k/2)} = \frac{2}{k}.
    \end{align*}
It follows that $\bad {U} \leq \binom {k+1}{2}\big (\frac {2}{k} \big ) = |U|$, as desired.
\end{proof}

We are now in a position to prove Theorem \ref{thm: NT-distribution-version} for $n = \Omega (k^{5/2})$.

\begin{thm}\label{5.2-case}
Let $n \in {\mathbb N}$ and $x \geq 1$ with $n \geq 1000x^{5/2}$. Suppose that $G$ is an $n$-vertex graph with $\hom (G)\leq n/20x$. Then there is a probability distribution $\mathcal{D}$ on $[0.1,0.9]^{V(G)}$ and a vertex set $U\subset V(G)$ with $|U|\geq x +1$ such that $\bad{U}\leq 8|U| \log_2|U|$. 
\end{thm}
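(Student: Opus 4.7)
The plan is to prove Theorem~\ref{5.2-case} by a dichotomy on the maximum degree of $G$. Write $k := \lceil x \rceil$, so the target is $|U| = k+1 \geq x+1$. The dichotomy splits according to whether $\Delta(G) \leq n/(25x)$, in which case Lemma~\ref{lem: bded-deg} applies directly, or $\Delta(G) > n/(25x)$, in which case I will aim to produce a $(k^{3/2}+k)$-diverse set of size $k+1$ and then apply Lemma~\ref{lem: div-approx}.

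In the first case, Lemma~\ref{lem: bded-deg} applies with parameter $x$: the hypotheses $\hom(G) \leq n/(20x) \leq n/(5x)$ and $n \geq 25x\,\Delta(G)$ both hold, giving $\mathcal{D}$ and $U$ with $|U| = \lceil x \rceil + 1 \geq x+1$ and $\bad U \leq |U|$. Since $|U| \geq 2$ gives $\log_2|U| \geq 1$, the required bound $\bad U \leq 8|U|\log_2|U|$ follows at once. In the second case, I would fix a vertex $v$ of maximum degree and set $A := N(v)$, so that $|A| \geq n/(25x) \geq 40 x^{3/2}$ under $n \geq 1000 x^{5/2}$. Using the elementary inequality $|N(u_1) \triangle N(u_2)| \geq |d^A(u_1) - d^A(u_2)|$ (by restricting the symmetric difference to $A$), it would suffice to locate $k+1$ vertices whose restricted degrees $d^A(\cdot)$ have pairwise gaps at least $k^{3/2}+k$. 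The hypothesis $\hom(G) \leq n/(20x)$ guarantees many distinct neighbourhoods, and combined with the $|A|$ lower bound one would aim to show that any failure to find such a spread of $d^A$-values yields a homogeneous set of size exceeding $n/(20x)$, a contradiction.

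\textbf{The hard part} will be handling the second case cleanly, particularly the regime where $k$ is comparable to the scale $|A|/(k^{3/2}+k)$ set by the available range of $d^A$-values: here a simple pigeonhole over intervals of length $k^{3/2}+k$ does not immediately give $k+1$ distinct buckets, so one must combine the restriction to $N(v)$ with information about the whole graph -- for instance, by iterating the argument on a well-chosen induced subgraph, by complementing $G$ when $\Delta(\overline{G})$ is also large, or by splitting off a further case based on $\delta(G)$. The technical heart of the proof is converting the bound $\hom(G) \leq n/(20x)$ into enough spread of neighbourhood profiles to guarantee $k+1$ mutually $(k^{3/2}+k)$-diverse vertices, which is subtle precisely when $\Delta(G)$ is large and the straightforward degree-reduction argument behind Lemma~\ref{lem: bded-deg} no longer applies.
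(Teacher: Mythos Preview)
Your first case is fine and matches one branch of the paper's argument. The second case, however, is not a proof: you identify the difficulty but do not resolve it, and the approach you sketch cannot work as stated. You want $k+1$ vertices whose $d^A$-values are pairwise separated by at least $k^{3/2}+k$; that requires $|A| \geq k(k^{3/2}+k) \geq k^{5/2}$. But your only lower bound is $|A| = \Delta(G) > n/(25x) \geq 40x^{3/2}$, which falls short by a factor of roughly $k$ when $\Delta(G)$ is near the threshold $n/(25x)$. So the pigeonhole on $d^A$-intervals simply does not give enough buckets in this regime, and the vague remedies you list (iterate, complement, case on $\delta(G)$) do not constitute an argument.

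The paper's proof is structurally different and does not proceed via a max-degree dichotomy. It runs by \emph{induction on $|V(G)|$}. One first takes a maximal set $V_0$ of pairwise $2k^{3/2}$-diverse vertices; if $|V_0|\geq k+1$ then Lemma~\ref{lem: div-approx} finishes. Otherwise the vertices cluster into at most $k$ balls $V_i$ of small diversity radius. In Case~I (every cluster centred at a vertex of ``moderate'' degree is small), one extracts a large induced subgraph of bounded maximum degree and applies Lemma~\ref{lem: bded-deg} there. In Case~II (some moderate-degree centre $v_j$ has a large cluster), one takes a $3k$-vertex control set $V\subset V_j$, splits the rest into $Y_1\subset N(v_j)$ and $Y_2\subset V(G)\setminus N(v_j)$ according to degree into $V$, applies the \emph{induction hypothesis} to each $G[Y_i]$ to get $(U_i,\mathcal{D}_i)$, and then forms the product distribution $\mathcal{U}_V\times\mathcal{D}_1\times\mathcal{D}_2\times\mathcal{T}_Z$. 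Lemma~\ref{lem: product-dist-lem} together with Lemma~\ref{lem: uniform-distribution-control} controls the cross terms $\bad{U_1,U_2}$, and Lemma~\ref{log2ineq} glues the two pieces into the single bound $\bad{U}\leq 8|U|\log_2|U|$. The induction and the product-distribution machinery are the missing ideas in your proposal; a direct, non-inductive argument of the kind you outline does not appear to suffice.
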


\begin{proof}
Let $k := \lceil x\rceil $. We will prove the theorem by induction on $|V(G)|$. To start with, observe that there is nothing to prove when $k\leq 4$ as we can set $\mathcal{D}$ to be any distribution on $[0.1,0.9]^{V(G)}$ and the requirements are trivially satisfied by any $(k +1)$-vertex set $U$, since $\bad{u,v}\leq 1$ for any pair $u,v$ of vertices; such a set $U$ exists as $k +1 \leq 1000x^{3/2}$. In particular, this proves that the theorem holds for the smallest possible case, when $n = 1000$ (where $x$ must equal $1$). We will proceed with the induction step and assume that $k>4$. 

Let $V_0$ be a largest vertex set of $G$ such that $|\diver{u,v}|\geq 2k^{3/2}$ for all $u,v\in V_0$. If $|V_0|\geq k+1$ then we are done by Lemma \ref{lem: div-approx}, otherwise assume that $V_0=\{v_1,v_2,\ldots, v_L\}$ for some $L\leq k$ and now for each $i\in [L]$ define the set $V_i:=\{v\in V(G): |\diver{v,v_i}| < 2k^{3/2}\}$. Due to the maximality of $S_0$ we get $V(G)=\bigcup_{i=1}^L V_i$. 
The proof splits into two cases:\vspace{2mm}

\noindent {\textbf {Case I:}}  Every $j\in [L]$ with $d_G(v_j)\in [{10k^{3/2}},n-1-10k^{3/2}]$ 
satisfies $|V_j| \leq 3k$.\vspace{2mm}

It is easy to see that there are at most $3k^2$ vertices of $G$ that do not lie in a set $V_j$ of size at least $3k$. Moreover, $d_G(v_i)-2k^{3/2}< d_G(v)< d_G(v_i)+2k^{3/2}$ for all $i\in[L]$ and $v\in V_i$. Thus, at least $n - 3k^2$ vertices $v\in V(G)$ have their degree satisfy $d_G(v) \notin [12k^{3/2},n-1-12k^{3/2}]$. Therefore, for all such vertices we have $d_G(v) \leq 12 k^{3/2}$ or $d_G(v) \geq n-1- 12k^{3/2}$. We will assume that at least half of these vertices fulfill the first condition, as otherwise we can follow an identical argument by working with the complement $\overline{G}$ instead. Consequently, we find a set $V\subset V(G)$ of size $|V| \geq (n-3k^2)/2\geq 450x^{5/2}$ with $\Delta(G[V])\leq 12k^{3/2}$. Thus $|V| \geq 25x\Delta (G[V])$  and $\hom (G[V]) \leq \hom (G) \leq (n-6k^2)/10x \leq |V|/5x$, hence we can apply Lemma \ref{lem: bded-deg} to $G[V]$ to obtain a distribution ${\cal D}_1$ on $[0.1,0.9]^V$ and a vertex set $U \subset V$ of size $\lceil x \rceil +1 = k+1$ with $\baddd {D}{1}{}{U} \leq |U|$. We also take ${\cal D}_0 := {\cal T}_{V(G) \setminus V}$ to be the trivial distribution induced by $V(G) \setminus V$, and let ${\cal D} := {\cal D}_0 \times {\cal D}_1$ denote the product distribution  on $[0.1,0.9]^{V(G)}$. By Lemma \ref{lem: product-dist-lem} we obtain $\bad {U} \leq \baddd {D}{1}{V}{U} \leq |U|$, as required.\\

\noindent {\textbf {Case II:}}  There is $j\in [L]$ such that 
$d_G(v_j)\in [10k^{3/2},n-1-10k^{3/2}]$ and $|V_j| \geq 3k$.\vspace{2mm}

We pick a subset $V$ of $V_j$ of size $3k$ such that $v_j\in V$. Next, we set ${X_1}:=N(v_j)\setminus V$ and ${X_2}=:V(G)\setminus(V\cup N(v_j))$. By the choice of $v_j$ note that both $|{X_1}|, |{X_2}| \geq 10k^{3/2}-3k$. Our aim is to show that most vertices in ${X_1}$ have big degree in $V$, whereas most vertices in ${X_2}$ have small degree in $V$. This will allow us to separate the distinct degrees we get in $G[{X_1}]$ from those we get in $G[{X_2}]$. This clustering behaviour is illustrated in the picture below.  

\begin{figure}[htp]
    \centering
    \includegraphics[width=12cm]{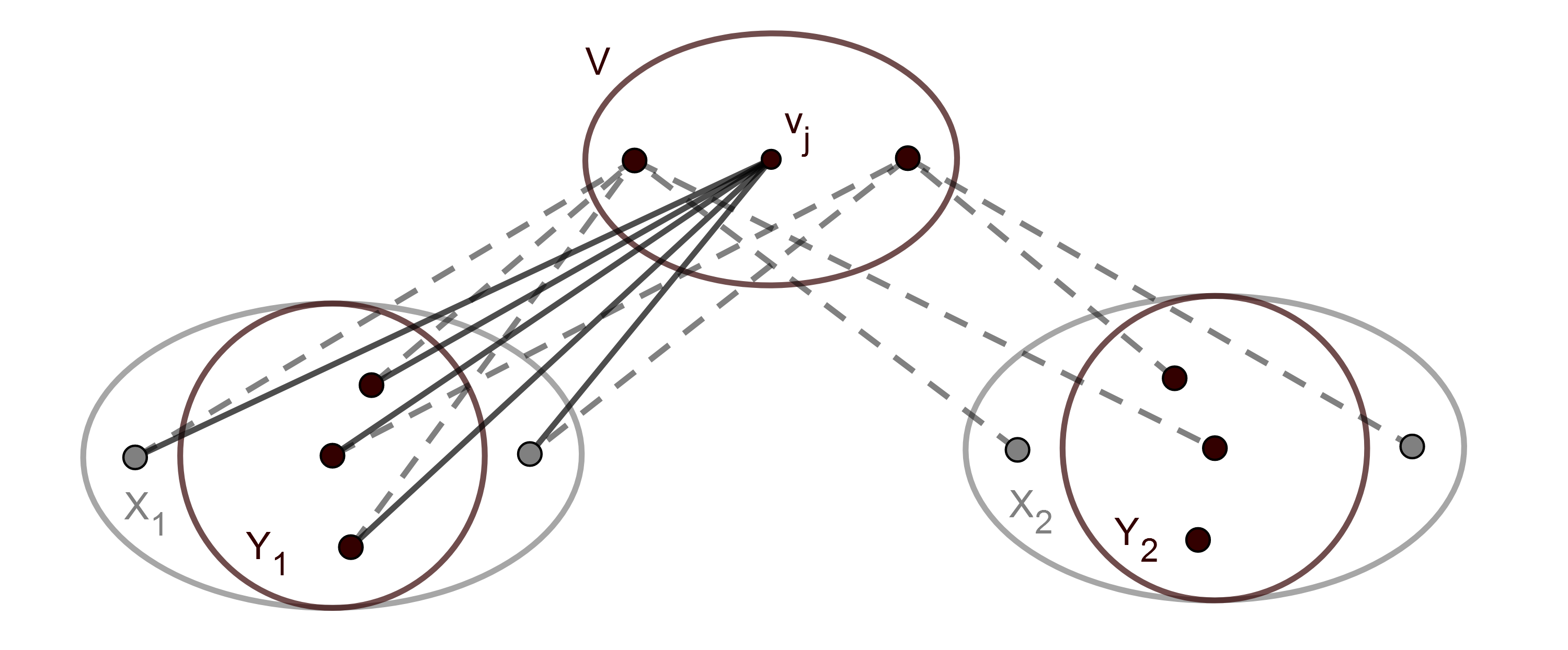}
    \caption{The clustering behaviour of vertices according to their degree in $V$.}
    \label{fig:deg-split}
\end{figure}

To show that this split occurs, we double count the edges in $\ov{G}$ between $X_1$ and $V$. Recall that $X_1\subset N(v_j)$ and for each $v\in V$ we have $|\diver{v,v_j}|\leq 2k^{3/2}$, so each $v\in V$ gives at most $2k^{3/2}$ edges from itself to $X_1$. Hence the number of edges $e_{\hspace{0.3mm}\ov{G}}(X_1,V) \leq (3k)(2k^{3/2}) = 6k^{5/2}$. It follows that there are at most $6k^{3/2}$ vertices of $X_1$ that are connected to less than $2k$ vertices in $V$. Thus, if we let $Y_1:=\{u\in X_1: d_G^V(u)\geq 2k\}$ we see that $t_1:= |Y_1| \geq |X_1| - 6k^{3/2} \geq 4k^{3/2} - 3k > 10.$\\
\indent Similarly, we double count the edges in $G$ between $X_2$ and $V$ to see that there are at most $6k^{5/2}$ of them. It follows that at most $6k^{3/2}$ vertices of $X_2$ that are connected to more than $k$ vertices in $V$. Therefore, if we let $Y_2:=\{u\in {X_2}:d_G^V(u)\leq k\}$, then we can also see that $t_2:=|Y_2| \geq |X_2| - 6k^{3/2} > 4k^{3/2}-3k > 10 $. Recalling that $V(G) = V \cup X_1 \cup {X_2}$ is a partition, this shows that $Z:=V(G)\setminus(Y_1 \cup Y_2)$ satisfies $|Z|\leq 2\cdot 6k^{3/2}+3k\leq 15k^{3/2}$. 

\indent To complete the proof, we apply the induction hypothesis to both $Y_1$ and $Y_2$. 
For $i\in \{1,2\}$ let $x_i:=x(t_i/n) \leq x$. This gives $\hom (G[Y_i]) \leq \hom (G) \leq n/20x=t_i/20x_i$. Furthermore: 
\begin{align*}
    \frac{t_i}{x_i^{5/2}}=\frac {1}{x_i^{3/2}}\cdot \frac {t_i}{x_i}=\frac {1}{x_i^{3/2}}\cdot  \frac {n}{x} \geq \frac {n}{x^{5/2}} \geq 1000.
\end{align*}
\indent Thus, for $i\in\{1,2\}$, provided $x_i \geq 1$ holds, we can apply the induction hypothesis to $G[Y_i]$ to find a probability distribution $\mathcal{D}_i$ on $[0.1,0.9]^{Y_i}$ and a set $U_i\subset Y_i$ satisfying
$|U_i|\geq x_i+1$ and:
   \begin{align}
        \label{eqn: inductive-control}
    \baddd{D}{i}{}{U_i}\leq |U_i| \cdot f(|U_i|),\ \ \ \text{where }f(y):=8\log_2y.
   \end{align}
   \indent Also note that if instead $x_i <1$ above, then as 
   $|Y_i| = t_i \geq 10$, we can take any set $U_i \subset Y_i$ of order $ \lceil x_i \rceil +1 = 2$ and any 
   distribution ${\cal D}_i$ on $[0.1,0.9]^{Y_i}$, so 
   \eqref{eqn: inductive-control} holds in all cases.\\
   \indent We can also assume that $\max \{|U_1|, |U_2| \} < k+1$, as otherwise taking $U$ to simply be one of these sets proves the theorem. 
   
\indent We will also let ${\cal D}_0 := {\cal U}_V$ denote the uniformly constant distribution on $[0.1,0.9]^V$ and let ${\cal D}_3 := {\cal T}_{Z}$ denote the trivial $Z$-induced distribution. We now set $U:=U_1\cup U_2$ and let ${\cal D}$ denote the product distribution $\prod^3_{i=0} {\cal D}_i$ on $[0.1,0.9]^{V} \times \prod _{i\in [2]} [0.1,0.9]^{Y_i}\times [0.1,0.9]^Z = [0.1,0.9]^{V(G)}$. 

Note that $d^V_G(u) \geq 2k \geq d_G^V(v) + k$ for all $u \in Y_1$ and $v \in Y_2$, by definition of $Y_1$ and $Y_2$. It then follows from Lemma \ref{lem: uniform-distribution-control} that for all such vertices we have:
    \begin{align}
        \label{eqn: split-control-to-V}
        \baddd {D}{0}{V}{u,v} \leq \frac {3}{k}.
    \end{align}
\indent As $n \geq 1000x^{5/2}$ and $|Z| \leq 15k^{3/2}$, we can now lower bound the size of $U$:
\begin{align*}
    |U|
        =
    |U_1|+|U_2|
    &\geq (x_1 + 1)+(x_2+1) 
    \geq x\bigg ( \frac {t_1}{n} \bigg )+x \bigg ( \frac {t_2}{n} \bigg ) + 2 \\
    &\geq \frac{(n-|Z|)x}{n} + 2 \geq x- \frac{15x\cdot k^{3/2}}{n} + 2 \geq x+1,
\end{align*}
which gives $|U| \geq x+1$. Finally, we are able to estimate $\bad{U}$ as follows: 
\begin{align*}
    \bad{U}
       & = \sum_{\{u,v\}\subset U_1}\bad{u,v}+\sum_{\{u,v\}\subset U_2}\bad{u,v}+\sum_{(u,v)\in U_1 \times U_2}\bad{u,v}\\
    &=\bad{U_1}+\bad{U_2}+|U_1||U_2|\cdot \max _{(u,v)\in U_1 \times U_2} \big \{\bad{u,v} \big \}\\
    &\leq \baddd{D}{1}{}{U_1}+\baddd{D}{2}{}{U_2}+|U_1||U_2| \cdot \max _{(u,v)\in U_1 \times U_2} \big \{\baddd {D}{0}{V}{u,v} \big \}\\
    &\leq 
    |U_1|\cdot f(|U_1|) + |U_2| \cdot f(|U_2|) + \frac {3}{k} \cdot |U_1||U_2| \leq |U|\cdot f(|U|).
\end{align*}
\indent The final three inequalities here respectively follow from Lemma \ref{lem: product-dist-lem}, then from 
\eqref{eqn: inductive-control} and \eqref{eqn: split-control-to-V}, and lastly from $\max \{|U_1|, |U_2|\} < k+1$ and Lemma \ref{log2ineq}. This completes the proof. 
\end{proof}

\subsection{The case when \texorpdfstring{$n = O(k^{5/2})$}{n is smaller than k to 5/2}}
\label{subsection: proof in the entire regime}

Before we move to the case when $n=O(k^{5/2})$, we present two results which will allow us to move to a large induced subgraph, which is reasonably regular. Comparable results, with a different range of parameters, were proved by Alon, Krivelevich and Sudakov in \cite{alon2008large} (Section 2). The next lemmas follow their approach. We first introduce the following notion. 

\begin{dfn}
For every $n$-vertex graph $G$, its average degree, denoted by $\ov{d}(G)$, is given by the formula $\ov{d}(G):= n^{-1}\sum_{v\in V(G)}d_G(v)$.
\end{dfn}

\begin{lem}
Every $n$-vertex graph $G$ contains an induced subgraph $H$ of order at least $n/3$ such that $\Delta(H)\leq 2\log_2n\cdot \ov{d}(H)$.
\end{lem}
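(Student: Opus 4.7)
My plan is to build $H$ by an iterative vertex-deletion procedure that strips out vertices of excessive degree, and to argue that the procedure must halt before removing more than $2n/3$ vertices. Concretely, set $G_0 := G$ and, while there exists $v \in V(G_i)$ with $d_{G_i}(v) > 2\log_2 n \cdot \overline{d}(G_i)$, obtain $G_{i+1}$ from $G_i$ by deleting such a $v$. The process terminates at some step $K$, and setting $H := G_K$ immediately gives $\Delta(H) \leq 2\log_2 n \cdot \overline{d}(H)$ by construction. The remaining task is therefore to establish $K \leq 2n/3$.

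For this, I would assume for contradiction that $K > 2n/3$. Writing $e_i := e(G_i)$ and $n_i := n - i$, the deleted vertex at step $i$ satisfies $d_{G_i}(v) > 2\log_2 n \cdot \overline{d}(G_i) = 4\log_2 n \cdot e_i / n_i$, so
\begin{align*}
e_{i+1} < e_i \Big(1 - \frac{4\log_2 n}{n_i}\Big).
\end{align*}
Iterating this for $i = 0, 1, \ldots, 2n/3 - 1$, taking logarithms and using $\ln(1-x) \leq -x$, together with the harmonic-sum estimate
\begin{align*}
\sum_{j=0}^{2n/3 - 1} \frac{1}{n-j} \;=\; \sum_{\ell = n/3 + 1}^{n} \frac{1}{\ell} \;\geq\; \ln 3 - o(1),
\end{align*}
one obtains $e_{2n/3} < e_0 \cdot n^{-4\log_2 3 + o(1)} \leq \tfrac{1}{2}\, n^{2 - 4\log_2 3 + o(1)}$. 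Since $4\log_2 3 \approx 6.34$ comfortably exceeds $2$, this is strictly less than $1$ once $n$ is large enough, forcing $e_{2n/3} = 0$.

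This is the required contradiction: if $e_{2n/3} = 0$ then $\overline{d}(G_{2n/3}) = \Delta(G_{2n/3}) = 0$, so the stopping criterion already held at step $2n/3$, contradicting $K > 2n/3$. The finitely many small values of $n$ excluded from the asymptotic estimate are routine, e.g.\ $H = G$ works whenever $n-1 \leq 2\log_2 n$. The only step needing honest bookkeeping is the harmonic-sum estimate, together with the side condition $n_i > 4\log_2 n$ required to keep the multiplicative factor $1 - 4\log_2 n / n_i$ positive; both are harmless once $n$ is past a small explicit threshold, and the final bound carries so much slack that the precise constant $2$ in ``$2\log_2 n$'' is not the delicate part of the argument.
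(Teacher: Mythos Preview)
Your argument is correct for large $n$, and the approach is genuinely different from the paper's. You delete one high-degree vertex at a time and track the edge count $e_i$, showing it shrinks by a multiplicative factor $1 - 4\log_2 n/n_i$; iterating over $2n/3$ steps and using $\sum_{\ell > n/3}^{n} 1/\ell \approx \ln 3$ drives $e_{2n/3}$ below $1$. The paper instead deletes vertices in batches (all vertices of degree $\geq d_i\log_2 n$ at once), runs only $\log_2 n$ rounds, and tracks the maximum degree: since each batch forces $\Delta_{i+1} \leq d_i\log_2 n \leq \Delta_i/2$, after $\log_2 n$ rounds $\Delta$ hits $0$. The vertex-count bound in the paper is then $n(1-1/(2\log_2 n))^{\log_2 n}\geq n/e$, which holds cleanly for all $n\geq 2$ without an asymptotic fudge.

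What each approach buys: your edge-tracking argument is very flexible with respect to the constants (as you note, $2\log_2 n$ could be replaced by $c\log_2 n$ for any $c>1/\log_2 3$ while keeping the $n/3$ conclusion), and it avoids thinking about how $\Delta$ interacts with batch deletions. The paper's batch argument is shorter and, more importantly, uniform in $n$: it never invokes ``$n$ large enough'', so there is no residual small-$n$ bookkeeping. In your version, that bookkeeping is genuinely there: the condition $n-1\leq 2\log_2 n$ only covers $n\leq 6$, while the harmonic-sum estimate together with the positivity condition $n_i>4\log_2 n$ needs $n$ somewhat larger, so a careful write-up would need to close the gap (e.g.\ by noting that $\Delta(G)/\ov d(G)\leq n/2$ for any non-empty $G$, so $H=G$ in fact works for all $n\leq 16$, and then checking that the main estimate already bites thereafter). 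This is routine, but it is a real loose end rather than a triviality.
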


\begin{proof}
We set $G_0:=G$ and for $i=0$ to $i=\log_2n$ we repeat the following algorithm:
first set $n_i:=|V(G_i)|,\ \Delta_i:=\Delta(G_i)$ and $d_i:=\ov{d}(G_i)$. Then, if $\Delta_i\leq 2d_i\log_2n$ we simply stop the process. Otherwise we repeatedly delete from $G_i$ all vertices of degree at least $d_i \log_2n$ to create a new graph $G_{i+1}$. Let $H$ be the graph we obtain after we complete the algorithm.\\
\indent Observe that at $i^{\text{th}}$ iteration we delete at most $e(G_i)/(d_i \log_2n)=n_i/(2\log_2n)$ vertices, therefore $n_{i+1}\geq n_i(1-(2\log_2n)^{-1})$. It follows that $|V(H)|\geq n\cdot (1-(2\log_2n)^{-1})^{\log_2n}$. As $1-x\geq e^{-2x}$ for $0<x\leq 1/2$, we deduce that $|V(H)|\geq n/e>n/3$.\\ 
\indent If $H$ was created because at some point $\Delta_i\leq 2d_i\log_2n$ then we are done. Otherwise $H$ was obtained after $\log_2n$ iterations and at each step $i$ we have $\Delta_{i+1}\leq d_i\log_2n$ and $2d_i\log_2n \leq \Delta_i$. Thus we see that $\Delta_{i+1}\leq \Delta_i/2$. It follows inductively that $\Delta(H)\leq \Delta(G)\cdot 2^{-\log_2n}<n\cdot n^{-1}=1$. We then get that $\Delta(H)=\ov{d}(H)=0$, which also ends the solution. 
\end{proof}

\begin{lem}\label{lem-reg}
Every $n$-vertex graph $G$ contains an induced subgraph $H$ that is of order at least $n/30\log_2n$ with $\Delta(H)\leq 5\log_2n\cdot \delta(H)$. 
\end{lem}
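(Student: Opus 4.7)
The strategy is to bootstrap the previous lemma, which already gives an induced subgraph $H_0 \subseteq G$ with $|V(H_0)| \geq n/3$ and $\Delta(H_0) \leq 2\log_2 n \cdot \overline{d}(H_0)$, into a statement about minimum degree rather than average degree. The idea is to perform a second cleaning step: iteratively delete any vertex of the current graph whose degree drops below a carefully chosen threshold. The output $H$ will then automatically satisfy a minimum-degree lower bound, while its maximum degree cannot have grown. The delicate part is calibrating the threshold so that \emph{both} the ratio $\Delta(H)/\delta(H) \leq 5\log_2 n$ and the size bound $|V(H)| \geq n/(30\log_2 n)$ survive simultaneously.

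Concretely, I would set $\delta_0 := 2\overline{d}(H_0)/5$ and iteratively delete, from $H_0$, any vertex of degree below $\delta_0$ in the current graph, stopping when no such vertex remains. Call the result $H$. By construction $\delta(H) \geq \delta_0$, and since deletions do not create new edges we have $\Delta(H) \leq \Delta(H_0)$. Hence
\[
\frac{\Delta(H)}{\delta(H)} \leq \frac{\Delta(H_0)}{\delta_0} \leq \frac{2\log_2 n \cdot \overline{d}(H_0)}{2\overline{d}(H_0)/5} = 5\log_2 n,
\]
which is the ratio claim. (If $\overline{d}(H_0)=0$ the lemma is trivial with $H=H_0$, so I assume $\overline{d}(H_0)>0$ below.)

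For the size bound I would argue by an edge count. Each deletion removes fewer than $\delta_0$ edges, so if $k := |V(H_0)| - |V(H)|$ then $e(H) > e(H_0) - k\delta_0$. On the other hand $e(H) \leq |V(H)|\Delta(H_0)/2$, so combining these and using $e(H_0) = |V(H_0)|\overline{d}(H_0)/2$ gives
\[
|V(H)| \left( \tfrac{\Delta(H_0)}{2} - \delta_0 \right) > |V(H_0)|\left( \tfrac{\overline{d}(H_0)}{2} - \delta_0 \right) = \frac{|V(H_0)|\,\overline{d}(H_0)}{10}.
\]
Since $\Delta(H_0)/2 \leq \log_2 n \cdot \overline{d}(H_0)$, the denominator on the left is at most $\log_2 n \cdot \overline{d}(H_0)$, so $|V(H)| > |V(H_0)|/(10\log_2 n) \geq n/(30\log_2 n)$, as required.

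The only real ``obstacle'' is the choice of the constant $2/5$ for $\delta_0$: setting $\delta_0 = \alpha\overline{d}(H_0)$ forces the ratio bound $\Delta(H)/\delta(H) \leq (2/\alpha)\log_2 n$ and the size bound $|V(H)| \geq (1-2\alpha)|V(H_0)|/(2\log_2 n)$, so one needs $2/\alpha \leq 5$ and $(1-2\alpha)/6 \geq 1/30$ to meet the stated constants; these pinch exactly at $\alpha = 2/5$, which is why this proportion is forced. The rest of the argument is just careful bookkeeping and a small edge-case check when $H_0$ is edgeless.
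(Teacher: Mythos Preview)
Your proof is correct and follows essentially the same approach as the paper: apply the previous lemma to obtain $H_0$ with $\Delta(H_0)\le 2\log_2 n\cdot \overline d(H_0)$, then iteratively delete vertices of degree below $\tfrac{2}{5}\overline d(H_0)$ and bound the number of survivors via an edge count. The only difference is that the paper updates the threshold to $\tfrac{2}{5}\overline d(G_i)$ after each deletion (and uses that the average degree is increasing), whereas you keep the threshold fixed at $\tfrac{2}{5}\overline d(H_0)$; your choice makes the edge-counting step for the size bound slightly cleaner, but the two arguments are otherwise identical in structure and constants.
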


\begin{proof}
By the previous lemma we can find an induced subgraph $G_0$ of $G$ of order $m\geq n/3$ such that $\Delta(G_0)\leq 2\log_2n\cdot \ov{d}(G_0)$. We now perform the following algorithm: starting with $i=0$, let $d_i:=\ov{d}(G_i)$ and delete a vertex $v$ of $G_i$ if $5d_{G_i}(v)<2d_i$. Let now $G_{i+1}$ be the resulting graph and increment $i$. Note that at each step we remove from $G_i$ at most $2d_i/5$ edges, which implies that $d_{i+1}|G_{i+1}|\geq d_i|G_i|-4d_i/5>d_i(|G_i|-1)$, thus $(d_i)_{i\geq 0}$ is an increasing sequence. Therefore we stop before deleting all the vertices and we let $H$ be the resulting graph. \\
\indent We can now observe that $\Delta(H)\leq \Delta(G_0)$ and $\delta(H)\geq 2d_0/5$, which immediately implies that $\Delta(H)\leq \Delta(G_0)\leq 2d_0\log_2n\leq 5\log_2n\cdot \delta(H)$. We finally have to lower bound the number $t$ of vertices that are left in $H$. When we created $H$ from $G_0$ we deleted less than $2(m-t)d_0/5$ edges, hence $2td_0\log_2n \geq t\Delta(H)\geq t\ov{d}(H)\geq md_0-4(m-t)d_0/5$. By rearranging the last inequality we obtain $t\geq m/(10\log _2n) \geq  n/(30\log_2n)$ and so $H$ is the required induced subgraph.
\end{proof}

We are interested in finding sets that have many diverse pairs of vertices as they will give us the freedom required to select vertices with distinct degrees. We thus make the following:

\begin{dfn}
Given a graph $G$ and $\varepsilon>0$, its \emph{diversity graph} $J_{\varepsilon}(G)$ is the graph on $V(G)$ with an edge between vertices $u$ and $v$ if $|N_G(u)\triangle N_G(v)|\leq \varepsilon\min \{|N_G(u)|,|N_G(v)|\}$. 
\end{dfn} 

The following theorem is the main component of our proof in this case. We note that our earlier results from Subsection \ref{subsection: proof in the larger regime} will be crucial here.

\begin{thm}\label{longest-ever}
Let $G$ be a $n$-vertex graph and let $k\in \mathbb{N}$ be such that $1000k^{5/2}\geq n\geq 8000k^2$, $\text{hom}(G)\leq n/12k$ and $\Delta(G)\leq 4nk^{-1/3}$. There is a probability distribution $\mathcal{D}$ on $[0.1,0.9]^{V(G)}$ and a vertex set $U\subset V(G)$ with $|U|= \Omega\big(k\log_2^{-2}(k+1)\big)$ and $\bad{U}\leq 8|U|\log_2 |U|$.
\end{thm}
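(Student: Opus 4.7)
The plan is to combine the regularisation result (Lemma \ref{lem-reg}) with a dichotomy on the diversity graph. First I would apply Lemma \ref{lem-reg} to obtain an induced subgraph $H$ of $G$ with $|V(H)| \geq n/(30 \log_2 n) = \Omega(k^2/\log_2(k+1))$ and $\Delta(H) \leq 5 \log_2 n \cdot \delta(H)$. Since $\hom(H) \leq \hom(G) \leq n/(12k)$ passes to induced subgraphs, and since Tur\'an applied to $\hom(G)$ forces $\overline{d}(G) \geq 12k$, tracking the average degree through the regularisation yields $\delta(H) = \Omega(k/\log_2^2(k+1))$.

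Second, I would consider the diversity graph $J_\varepsilon(H)$ with $\varepsilon$ tuned so that $\varepsilon \cdot \delta(H) \geq k^{3/2} + k$ (which is possible, say, with $\varepsilon = \Theta(1/\log_2(k+1))$, after absorbing constants). Let $V_0 = \{v_1, \ldots, v_L\}$ be a maximal independent set in $J_\varepsilon(H)$, and split into cases based on $L$.

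\textbf{Case A ($L \geq k+1$).} Any two vertices of $V_0$ are $(k^{3/2}+k)$-divergent in $H$. Applying Lemma \ref{lem: div-approx} to a $(k+1)$-subset of $V_0$ yields a distribution on $[0.1,0.9]^{V(H)}$; extending it by the trivial distribution $\mathcal{T}_{V(G) \setminus V(H)}$ via Lemma \ref{lem: product-dist-lem} produces the required $\mathcal{D}$ and $U$ with $|U| = k+1$ and $\bad{U} \leq 8|U| \log_2 |U|$, comfortably exceeding the required lower bound on $|U|$.

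\textbf{Case B ($L \leq k$).} By maximality $V(H) = \bigcup_{i=1}^L N_{J_\varepsilon}[v_i]$, so some cluster $C$ of vertices $J_\varepsilon$-similar to a fixed $v_i$ has $|C| \geq |V(H)|/k = \Omega(k/\log_2(k+1))$. All of $C$ shares an approximate common neighbourhood $A := N_H(v_i)$, and for $v, v' \in C$ one has $|N(v) \triangle N(v')| = |D_v \triangle D_{v'}|$, where $D_v := N(v) \triangle N(v_i)$. Thus finding pairwise-divergent vertices in $C$ reduces to the same problem on an auxiliary structure with smaller ambient size. I would then either extract enough diversity from the $D_v$'s to return to the Case A argument in this auxiliary setting, or check that the residual configuration satisfies the hypotheses of Theorem \ref{5.2-case} and appeal to it directly; the product distribution lemma (Lemma \ref{lem: product-dist-lem}) then combines the resulting distribution on $C$ with a trivial distribution on its complement.

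The main obstacle will be Case B: turning "many mutually similar vertices" into actionable diversity while keeping tight control of parameters. A careful choice of $\varepsilon$ and of the cluster-size threshold is essential, and one must verify that the induced subproblem either feeds back into Case A or meets the hypotheses $n' \geq 1000(x')^{5/2}$ and $\hom \leq n'/(20x')$ needed for Theorem \ref{5.2-case}. The $\log_2^2(k+1)$ loss in $|U|$ then appears naturally as the product of one logarithmic factor from Lemma \ref{lem-reg} and one more logarithmic factor absorbed in the diversity-cluster step (or in the parameter adjustment when invoking Theorem \ref{5.2-case}).
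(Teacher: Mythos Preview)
Your high-level architecture (regularise via Lemma \ref{lem-reg}, then split on the diversity graph, using Theorem \ref{5.2-case} as a subroutine) matches the paper, but the execution has a concrete error in Case A and is too vague to constitute a proof in Case B.

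\textbf{Case A is broken.} You want to choose $\varepsilon$ so that $\varepsilon\cdot\delta(H)\geq k^{3/2}+k$, and then feed a $J_\varepsilon$-independent set into Lemma \ref{lem: div-approx}. But the only lower bound available is $\delta(H)=\Omega\big(k/\log_2^2(k+1)\big)$ (which you yourself derive), and this can be essentially tight: there are graphs in the hypothesis range whose regularised subgraph has $\delta(H)=\Theta(k/\log^2 k)$. For such graphs, $\varepsilon\cdot\delta(H)\geq k^{3/2}$ would force $\varepsilon=\Omega(k^{1/2}\log^2 k)\gg 1$, which is impossible. Lemma \ref{lem: div-approx} simply cannot be invoked here. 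The paper avoids this entirely: it takes $\varepsilon=1/48$ constant, splits on whether most vertices have small $J_\varepsilon$-degree, and in the ``diverse'' case applies Lemma \ref{lem: blended-distribution-control} directly with $D=\delta(H)/48$. The crucial extra ingredient you are missing is a $\gamma$-balanced condition: by randomly thinning to a set $W$ of size $\Theta(k)$, the paper arranges that every $v\in V(H)$ has $d^W(v)\leq O(\log n)\cdot\max\{1,\Delta(H)k/n\}$, and this balance is what makes the exponential term in \eqref{eqn:bad-eqn-control} negligible even though $D$ is far below $k^{3/2}$.

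\textbf{Case B is where the real work lies, and your sketch does not engage with it.} Having many mutually similar vertices in $H$ does not automatically put you back in Case A or inside the hypotheses of Theorem \ref{5.2-case}. The paper's argument here is substantially more elaborate: it selects a random sparse set $W$ of ``cluster centres'' so that the neighbourhoods $S_w\subset N_H(w)$ are pairwise disjoint, builds control sets $T_w\subset N_J(w)$ to separate degrees across clusters, and then applies Theorem \ref{5.2-case} inside each $S'_w$. The verification $|S'_w|/k_w^{5/2}\geq 1000$ is exactly where the hypothesis $\Delta(G)\leq 4nk^{-1/3}$ is used (via $|S'_w|\leq\Delta(H)\leq\Delta(G)$). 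Your single-cluster picture (one $C$ of size $\Omega(k/\log k)$) misses both the need for many clusters covering a constant fraction of $|V(H)|$ and the cross-cluster control sets $T_w$; without these you cannot assemble the product distribution or reach the target size $|U|=\Omega(k/\log^2 k)$.
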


\begin{proof}
We first note that if $k$ is small then there is nothing to prove, so we can assume $k>2^{40}$. Moreover, together with the hypothesis this gives: 
\begin{align}\label{eqn:log-corres}
    20+4\log_2k\leq 2\log_2n\leq 20+5\log_2k \leq (5.5) \log _2 k \leq k / 100.
\end{align}
\indent   Next, we apply Lemma $\ref{lem-reg}$ to find an induced subgraph $H$ of $G$ of order $m \geq n\log_2^{-1}n/30$ with $\Delta(H)\leq 5\log_2n\cdot \delta(H)$. From now on we will only work with this subgraph $H$. Notice that $\Delta(H)\geq k\log_2^{-1}n/10$, as otherwise by Tur\'an's Theorem, combined with \eqref{eqn:log-corres}, we find an independent set in $H$ (and so in $G$) of order at least $m/(\Delta(H)+1)\geq n(3k+30\log_2n)^{-1}>n/4k$, contradicting the hypothesis.

Take $J$ to denote the diversity graph 
$J := J_{\varepsilon }(H)$, where $\varepsilon = 1/48$. We then set:
    \begin{align*}
        S_1:= \bigg \{v\in V(H):d_{J}(v)\leq \frac {m}{600k} \bigg \} \quad \mbox{and} \quad  S_2:=V(H)\setminus S_1.
    \end{align*}
\indent Our proof will split according to the sizes of $S_1$ and $S_2$. \vspace{2mm}

\noindent \ul{\textbf{CASE 1:}} $|S_1|\geq m/2$. 

\indent We will show that in this scenario we can take the desired set $U \subset S_1$. We select a set $W \subset S_1$ by including every element of $S_1$ independently with probability $p:=8k/|S_1|$.\\
\indent We now claim that each of the following events holds with probability at least $3/4$: 
    \begin{enumerate}[(i),nosep]
        \item \ \ $|W| \geq 4k$ ;
        \item \ \ $e(J[W]) \leq k$ ;
        \item \ \ $d^W_H(v) \leq 2 \log_2 n \cdot m_{\Delta }$ for all $v \in V(H)$, where 
        $m_{\Delta } := \max\{1,240\cdot\Delta(H)\cdot k/n\}$. 
    \end{enumerate}
    
\indent To prove the claim for (i)$-$(iii) above, let us first denote by $\mathcal{A}_i,\mathcal{A}_{ii}$ and $\mathcal{A}_{iii}$ the events that $|W|\leq 4k$, $e(J[W])\leq 2k$ and $d^W_H(v) \geq 2 \log n \cdot m_{\Delta }$, respectively.\\
\indent Starting with (i), note that $|W|\sim Bin(|S_1|,p)$ with $\bE[|U|]=p|S_1| = 8k$, therefore by Chernoff's Inequality
we get $\bP ({\cal A}_i) = \bP \big (|W|\leq 4k \big )\leq \exp(-k) < 1/4$, proving it for (i).\\
\indent  For (ii) observe that $\bE[e(J[U])]\leq p^2e(J[S_1]) \leq p^2|S_1|(m/600k) \leq 
      64km/600|S_1| \leq k/4$. From Markov's inequality we get ${\mathbb P}({\cal A}_{ii}) = {\mathbb P}(e_J[U] \geq k) \leq 1/4$, which gives us (ii).\\
\indent Lastly, for (iii) take $v\in V(H)$ and let $n_v:= d^{S_1}(v)$. Then note that $d_H^W(v)\sim Bin(n_v,p)$. Now Theorem \ref{binomial-bound} gives $\bP\big(d_H^W(v)\geq 2m_v\log_2n\big)\leq 2^{-2\log_2n}=n^{-2}$, where $m_v:=\max\{1,240n_vk/n\}$. As $m_\Delta\geq m_v$ for all $v\in V(H)$, the union bound gives ${\mathbb P}({\cal A}_{iii}) \leq  n^{-1} < 1/4$.

Combining the above bounds gives us $\bP(\mathcal{A}_{i})+\bP(\mathcal{A}_{ii})+\bP(\mathcal{A}_{iii}) \leq 3/4$. Therefore, by using the union bound we can choose a set $W\subset S_1$ that satisfies all the conditions in (i)$-$(iii).

To continue the proof in this case, note that by (i) and (ii) we can apply Tur\'an's theorem to $J[W]$ to find an independent set $U_0\subset W$ with $|U_0| = 2k+1$. However, this means that $U_0$ is $\big ( \delta (H)/48 \big )$-diverse to $V(H)$. By (iii) the set $U_0$ is $\gamma $-balanced to $V(H)$, where $\gamma := \log_2 n\cdot m_{\Delta }/k$. Letting ${\cal D}:= {\cal B}_{\beta }(U_0,V(H))$ denote the blended probability distribution on $[0.1,0.9]^{V(H)}$, by applying Lemma \ref{lem: blended-distribution-control} with $\beta^{-1}:=10\log_2n\sqrt{m_\Delta}$ we obtain that for all distinct $u,v \in U_0$:
$$\bad {u,v} \leq \frac{960\log_2n\sqrt{m_\Delta}}{\delta(H)}+\frac{\delta (H)}{48}\exp\left(\frac{-4.5\cdot m_\Delta\log_2^2n}{2m_\Delta\log_2n}\right).$$
\indent By noting that $\Delta:=\Delta(H)\leq 5\log_2n\cdot \delta(H)$, this can be further reduced to:
$$\bad {u,v} \leq \frac{12\cdot(20\log_2n)^2\cdot\sqrt{m_\Delta}}{\Delta}+\frac{\delta(H)}{48n^2}.$$
\indent Our next claim is that $\Delta^{-1}\sqrt{m_\Delta}< 28kn^{-1}\log_2k$. Indeed, on the one hand, when $m_\Delta=1$ then $\Delta^{-1}\sqrt{m_\Delta}\leq \Delta^{-1}\leq 10kn^{-1}\log_2n<28kn^{-1}\log_2k$ by \eqref{eqn:log-corres}, as required. On the other hand, $m_{\Delta } \geq 1$ implies $\Delta^{-1}\leq 240kn^{-1}$ and so $\Delta^{-1}\sqrt{m_\Delta}\leq \sqrt{240k/(n\Delta)}\leq 240kn^{-1}<28kn^{-1}\log_2k$, which proves the claim.\\
\indent Recall that  $\log_2n \leq 3 \log_2k$ by \eqref{eqn:log-corres} and that $n\geq 8000k^2$ and $\delta(H)<n$. Therefore, we can deduce that for all distinct $u,v \in U_0$ we have:
\begin{align*}
    \bad{u,v}\leq 12 \cdot (60 \log_2 k)^2 \cdot \frac {28k \log_2 k}{n } + \frac {1}{10^5k^2} \leq \frac {10^3 (\log_2 k)^3}{k}.
\end{align*}
\indent To complete the proof in this case, we choose a subset $U\subset U_0$ of size $10^{-3}k\log_2^{-2}k \geq k^{1/4}$. It follows that $\bad{u,v}\leq 16|U|^{-1}\log_2|U|\ \ \text{for all }u,v\in U.$\\
\indent By summing over all pairs of distinct vertices in $U$, it immediately follows, as required, that: 
$$\bad {U} \leq \frac{16\log_2|U|}{|U|} \cdot \dbinom{|U|}{2}=8|U|\log_2|U|.$$

\noindent \ul{\textbf{CASE 2:}} $|S_2|\geq m/2$.

\indent Our first step here is to find a set $W \subset S_2$ and for each vertex $w\in W$ two sets $S_w, T_w \subset V(H)$ with the following properties: 
\begin{enumerate}[(i),nosep]
    \item $|W| \geq |S_2|/16\Delta(H)$;
    \item $S_w \subset N_H(w)$ and $|S_w| \geq |N_H(w)|/2$ for each $w \in W$;
    \item $S_w \cap  N_H(w') = \emptyset $ for all distinct $w,w' \in W$;
    \item $T_w \subset N_J(w)$ with $|T_w| = t := 2^{-19}\cdot 9k\log_2^{-2}k$ for all $w \in W$. 
\end{enumerate}

    With these sets in hand, our set $U$ will (roughly) be of the form $U = \bigcup _{w \in W} U_w$, where each $U_w$ is a set produced by applying Theorem \ref{5.2-case} to $S_w\subset N_H(w)$, while the sets $T_w$ will be used to establish `bad' control between vertices in distinct $U_w$.

As the following diagram suggests, our partition is guided by the neighbourhoods of vertices in the set  $W = \{w_i\}_i$. The high `$J$-degree' of vertices in $S_2$ guarantees a strong clustering behaviour, so that each vertex $w_i$ has a large set $T_{w_i}$ of vertices which behave very similarly. These sets can be used to obtain `bad' control between vertices in distinct $S_{w_i}$.

\begin{figure}[htp]
    \centering
    \includegraphics[width=12cm]{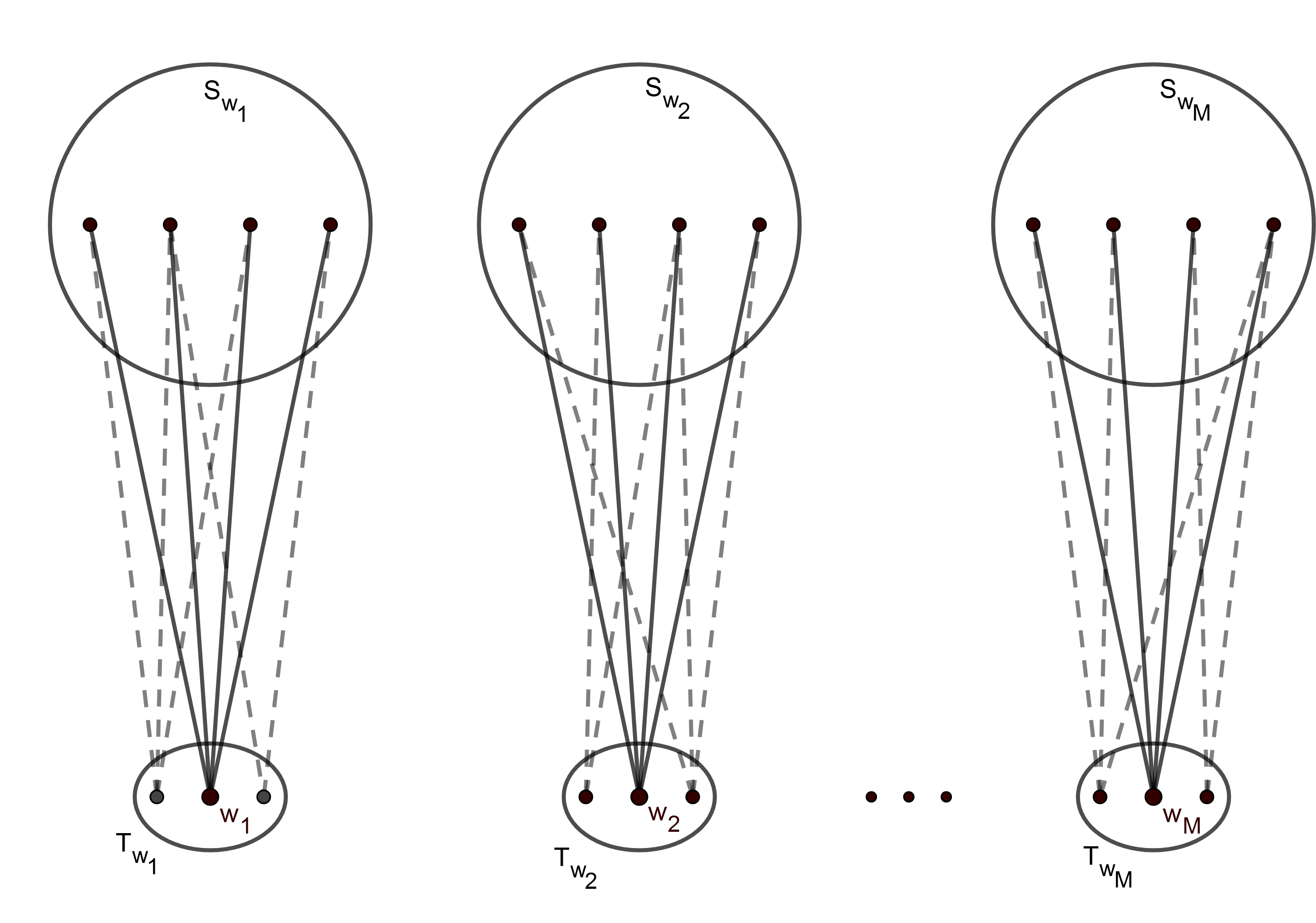}
    \caption{The clusters formed around each $w\in W$.}
    \label{fig:cluster}
\end{figure}

We now proceed with the details. To begin, select a set $W_0 \subset S_2$ by including each element independently with probability $p:= 1/8\Delta $, where $\Delta: = \Delta (H)$. For each $w \in W_0$ we set: 
    \begin{align*}
        S_w:= \big \{v \in V(H): N_H(v) \cap W_0 = \{w\} \big \}.
    \end{align*}
\indent We then let $W \subset W_0$ be the set $W := \{w \in W_0: |S_w| \geq |N_H(w)| / 2\}$. Lastly, each $w \in W$ is also an element of $S_2$, by definition, so we have $d_J(w) \geq m/600k > t$. We take $T_w$ to be an arbitrary subset of size $t$ from $N_J(w)$. 

Having specified the sets, it remains to show that with positive probability properties (i)$-$(iv) hold for our choices. To see this, note that (ii) holds by definition of $S_w$ and $W$. Property (iii) also always holds as if $v \in S_w\cap N_H(w')$ then $v \in N_H(w) \cap N_H(w')$ and $\{w, w'\}  \subset N_H(v) \cap W_0$, which by definition of $S_w$ implies $w = w'$. Lastly (iv) immediately holds by construction.\\
\indent It only remains to prove that (i) holds with positive probability. To see this, note that given $w\in S_2$ and $v\in N(w)$ we have:         
        $$\bP \big (v\in S_w \big |w \in W_0 \big )
            =
        (1-p)^{d^{S_2}_H(v)-1}>(1-p)^\Delta\geq e^{-2p\Delta}
            =
        e^{-1/4}.$$ 
\indent Thus $\bP(v\notin S_w|w \in W_0)\leq 1-e^{-1/4}\leq 1/4$ and so $\bE \big [|N(w)\setminus S_w| \big | w \in W_0 \big ]\leq |N(w)|/4$. It follows from Markov's inequality that: 
        $${\mathbb P}(w \notin W| w \in W_0)  = 
        \bP \big (|N(w)\setminus S_w|\geq |N(w)|/2 \big | w \in W_0 \big )\leq 1/2.$$
\indent We can now further deduce that:
$${\mathbb E}\big [|W_0\setminus W| \big ] = \sum _{w \in S_2} {\mathbb P}(w \notin W| w \in W_0) \cdot {\mathbb P}(w\in W_0) \leq {\mathbb E}\big [|W_0| \big ] / 2 = |S_2|p/2,$$ 
since ${\mathbb E}[|W_0|] = |S_2|p$ because $|W_0| \sim \mbox{Bin}(|S_2|, p)$. It follows that ${\mathbb E}[|W|] = { \mathbb E }[|W_0| - |W_0\setminus W|] \geq |S_2|p/2 = |S_2|/16\Delta $. Thus we can fix a choice of $W$ so that (i), and hence (i)$-$(iv), are satisfied.

Our current aim is to find distinct expected degrees in each subgraph $G[S_w]$ with $w\in W$ by appealing to Theorem \ref{5.2-case} and to use the control sets $\{T_w\}_{w \in W}$ that ensure we can control the degrees between the different sets, so that we can find our required set $U$ in $\bigcup_{w\in W} S_w$.\\
\indent To proceed with this, first observe that the sets $\{S_w\}_{w \in W}$ are pairwise disjoint, since for distinct $w,w'\in W$ we have $S_w \cap S_{w'} \subset S_w \cap N_H(w') = \emptyset $ by (ii) and (iii).\\
\indent Next, notice that the sets $\{T_w\}_{w \in W}$ are also pairwise disjoint. Indeed, suppose there is some $v\in T_{w_1}\cap T_{w_2}$ for some distinct $w_1,w_2 \in W$ and assume $|N(w_1)|\leq |N(w_2)|$. Let $S_v:=S_{w_2}\cap N(v)$ and $\ov{S_v}:=S_{w_2}\setminus N(v)$. As $v\sim w_1$ in $J$, $N_H(w_1)\cap S_{w_2} = \emptyset $ and $S_{w_2} \subset N_H(w_2)$, we immediately see that $|S_v|\leq \varepsilon|N(w_1)|\leq \varepsilon|N(w_2)|$. However $v\sim w_2$ in $J$, thus $|\ov{S_v}|\leq \varepsilon|N(w_2)|$. It follows that $|N(w_2)|/2\leq |S_{w_2}|=|S_v|+|\ov{S_v}|\leq 2\varepsilon|N(w_2)| = |N(w_2)| / 24$, which is a contradiction. Therefore $T_{w_1}\cap T_{w_2}=\emptyset$ for any $w_1\neq w_2$ in $W$.

\indent We want to ensure that vertices of $S_w$ have high degree in $T_w$, whereas their degree in $T_{w^\prime}$ with $w^\prime\neq w$ is low. Given $w\in W$ we define: 
    \begin{align*}
        R_w:= \big \{v\in \bigcup_{w^\prime\neq w} S_{w^\prime}:d_H^{T_w}(v)\geq t/3 \big \}, 
            \qquad \mbox{and} \qquad 
        L_w:= \big \{v\in S_w:d_H^{T_w}(v)\leq 2t/3 \big \}.
    \end{align*} 
\indent If we count the non-edges between $S_w$ and $T_w$ we see there are at least $(t/3)|L_w|$ of them, whereas their number is at most $t(\varepsilon |N(w)|)$ since each vertex of $T_w$ is connected to $w$ in $J$. It follows that $|L_w|\leq 3 \varepsilon |N(w)|\leq 3 \varepsilon (2|S_w|) \leq |S_w|/8$, using (ii) above and that $\varepsilon = 1/48$. Similarly, by double counting the edges between $\bigcup_{w^\prime\neq w} S_{w^\prime}$ and $T_w$ we obtain $|R_w|\leq |S_w|/8$. 

We now set $S^\prime _w:= S_w\setminus\bigcup_{v\in W}(L_v\cup R_v\cup T_v) \subset S_w$ for each $w \in W$. Discarding elements if necessary, we may assume that $|S^{\prime }_w| > 1$ for all $w \in W$. As the sets $\{S_w\}_{w \in W}$ are pairwise disjoint, this also holds for $\{S_w^{\prime}\}_{w \in W}$. From our bounds above we find that: 
    \begin{align*}
        \big | \bigsqcup _{w \in W} S^{\prime }_w \big | 
            \geq 
        \big | \bigsqcup _{w \in W} S_w \big | - \big |\bigcup_{v\in W}(L_v\cup R_v\cup T_v)\big | - \big|W\big|
            &\geq 
        \sum _{w \in W} \big ( |S_w| - 
        |L_w| - |R_w| - |T_w |-1 \big )\\
            & \geq 
      \sum _{w \in W} \big ( |S_w| - |S_w|/4 - t-1 \big )\\ 
          & \geq 
        \sum _{w \in W} \frac{|S_w|}{2} \geq 
        |W|\frac{\delta (G)}{4}.
    \end{align*}
\indent The second inequality here comes from $|L_w|, |R_w| \leq |S_w|/8$, whereas the third one uses that: $$1+t:=1+\frac{9k}{2^{19}\log_2^2k}\overset{\eqref{eqn:log-corres}}{\leq} \frac{k}{400\log_2^2n}\leq \frac{\Delta(H)}{40\log_2 n}\leq \frac{\delta (H)}{8} \leq \frac{|S_w|}{4}.$$
\indent The final inequality above comes from (ii). Continuing with the previous expression, using that $\delta (H) \geq \Delta /(5\log_2 n)$ and that, by property (i), $|W| \geq |S_2|/16\Delta \geq m/32\Delta$, we obtain: 
    \begin{align}
        \label{eqn: lower bound on union of S}
        \sum _{w \in W} \big | S^{\prime }_w \big |
            \geq 
        \frac{|W|\delta (G)}{4}
            \geq 
        \bigg ( \frac {|S_2|}{16\Delta } \bigg ) 
        \bigg ( \frac {\Delta }{20 \log_2 n} \bigg )
            = 
        \frac {m}{640 \log_2 n} 
            \geq 
        \frac {n}{2^{15} \log_2 ^{2}n}.
    \end{align}
\indent We are now in good position to find the desired set $U$. To do this, our main aim is to apply Theorem \ref{5.2-case} to each graph $G[S_w']$. With this in mind, for each $w\in W$ let $k_w:=|S^\prime_w|k/n$, and note that $|S_w^\prime|/k_w=n/k$.  Also recall that $|S^{\prime }_w| \leq |N_H(w)| \leq \Delta\leq 4nk^{- 1/3}$, hence: 
\begin{align*}
    \frac{|S^\prime_w|}{k_w^{5/2}}
        =
    \frac {|S^{\prime }_w|}{(|S_w^\prime|k /n)^{5/2}} 
        =
    \frac {n^{5/2}}{k^{5/2}|S^\prime_w|^{3/2}} 
        \geq 
    \frac {n^{5/2}}{k^{5/2}\cdot\Delta ^{3/2}} 
        \geq 
   \frac {n^{5/2}}{k^{5/2} (4nk^{-1/3})^{3/2}} = \frac {n}{8k^2} \geq 1000.
\end{align*}
\indent We also have $\hom (G[S^{\prime}_w]) \leq \hom (G) 
\leq n / 12k = |S_w^{\prime }|/12k_w$. Thus for each $w \in W$, provided $k_w \geq 1$, we can apply Theorem \ref{5.2-case} to $G[S^{\prime }_w]$ to obtain a set $U_w \subset S^{\prime }_w$ with $|U_w| \geq k_w+1$ and a probability distribution $\mathcal{D}_w$ on $[0.1,0.9]^{S^\prime_w}$ such that: 
    \begin{align}
        \label{eqn: internal-control-in-U_w}
        \baddd{D}{w}{S^\prime_w}{U_w}\leq |U_w| \cdot f(|U_w|),\ \ \ \text{where }f(x):=8\log_2x.
    \end{align}
\indent As in the proof of Theorem \ref{5.2-case}, if $k_w\leq 1$ then any set $U_w\subset S^\prime_w$ of size $2\geq k_i+1$ trivially satisfies \eqref{eqn: internal-control-in-U_w}, thus the above computations all make sense.

 We now set $U := \bigcup _{w \in W} U_w$ and $S^{\prime } := \bigcup _{w \in W} S^{\prime }_w$. Our distribution ${\cal D}$ will again be a product distribution, with ${\cal D}_w$ the forming factors. For each $w \in W$ we also take ${\cal E}_w $ to denote the uniformly 
constant distribution on $T_w$ given by ${\cal E}_w := {\cal U}_{T_w}$ and set $T := \cup _{w \in W} T_w$. We note that given distinct $w, w' \in W$ and $u \in U_w$, $u' \in U_{w'}$ we have $d_H^{T_w}(u) \geq 2t/3 \geq d_H^{T_w}(u') + t/3$. Therefore, by the choice of ${\cal E}_w$ and from Lemma 
\ref{lem: uniform-distribution-control} we find that: 
    \begin{align}
        \label{eqn: cross-control-from-T_w}
        \baddd {\cal E}{w}{T_w}{u,u'} \leq 9/t.
    \end{align}
\indent We also let ${\cal T}_R$ denote the trivial $R$-induced distribution, where $R := V(G) \setminus (S^{\prime } \cup T)$. Let ${\cal D}$ be the product distribution on $[0.1,0.9]^{S^{\prime }} \times [0.1,0.9]^{T} \times [0.1,0.9]^{R} = [0.1,0.9]^{V(G)}$ below:
    \begin{align*}
        {\cal D} 
                := 
        \big ( \prod _{w \in W} {\cal D}_w \big ) 
                \times
        \big ( \prod _{w \in W} {\cal E}_w \big )
                \times 
        {\cal T}_R.
    \end{align*}
    \indent To complete the proof, we are only left to lower bound $|U|$ and upper bound $\bad {U}$. For the lower bound, using \eqref{eqn: lower bound on union of S} and that $\log_2n \leq 2\sqrt{2}\log_2k$ from \eqref{eqn:log-corres}, we obtain:
    \begin{align*}
        |U| 
            =
        \sum  _{w \in W} \big | U_w \big |
            \geq 
        \sum _{w \in W} k_w 
            \geq 
        \sum _{w \in W} |S_w^{\prime }| \cdot \frac {k}{n} 
            = 
        \frac {k}{n} \bigg ( \big | \bigcup _{w \in W} S_w^{\prime } \big | \bigg ) 
            \geq 
        \frac {k}{2^{19} \log ^2k}=\frac{t}{9}.
    \end{align*}
\indent For the upper bound on $\bad {U}$, we have:
    \begin{align*}
        \bad {U} 
                &= 
        \sum _{w \in W} \bad {U_w} + \sum _{\{w,w'\} \subset W} \bad {U_w, U_{w'}}\\
                & \leq 
        \sum _{w \in W} \baddd {\cal D}{w}{S_w^{\prime }} {U_w} 
                + 
        \sum _{\{w,w'\} \subset W} \baddd {\cal E}{w}{T_w}{U_w, U_{w'}}\\
                & \leq 
        \sum _{w \in W} \baddd {\cal D}{w}{S_w^{\prime }} {U_w} 
                + 
        \sum _{\{w,w'\} \subset W} |U_w|\cdot |U_{w'}| \cdot \max _{(u,u') \in U_w \times U_{w'}} 
        \baddd {\cal E}{w}{T_w}{u_w, u_{w'}} \\
                & \leq 
        \sum _{w \in W} |U_w| \cdot f(|U_w|)
                + 
        \sum _{\{w,w'\} \subset W} |U_w|\cdot |U_{w'}| \cdot \bigg ( \frac {9}{t} \bigg ).
    \end{align*}
\indent The first inequality here follows Lemma \ref{lem: product-dist-lem}, the second is immediate from the definition of $\baddd {D}{}{S}{U,V}$, whereas the third one holds by \eqref{eqn: internal-control-in-U_w} and \eqref{eqn: cross-control-from-T_w}.

Choose a smallest subset $W^\prime := \{w_1,\ldots, w_M\}$ of $W$ so that $|\bigsqcup_{w\in W^\prime} U_w|\geq t/9$. If $W^\prime=\{w^\prime\}$ for some $w^\prime\in W$ then we are done by simply taking $U=U_{w^\prime}$ since $\bad{U_{w^\prime}}\leq |U_{w^\prime}|f(|U_{w^\prime}|)$. Otherwise we can assume that the sequence $U_i := U_{w_i}$ is non-increasing in size with $i$, i.e. that $|U_1| \geq |U_2| \geq ... \geq |U_M|$. Setting $U_{< i} := \bigcup _{j < i} U_i$, we immediately see from our choice of $W^\prime$ that $|U_{<i}|\leq t/9$. Our bound on $\bad {U}$ from above thus gives: 
    \begin{align*}
        \bad {U} 
            \leq 
        \sum _{i\in [M]} |U_i| \cdot f\big ( |U_i| \big ) 
            + 
        \sum _{i\in [M]}  \bigg ( \frac{9|U_{<i}|}{t} \bigg ) |U_i|
            \leq 
        \sum _{i\in [M]} |U_i| \cdot f\big ( |U_i| \big ) 
            + 
        \sum _{i\in [2,M]}|U_i|.
    \end{align*}
\indent For each $i\geq 2$ we have $|U_i| \leq |U_{< i}|$ from the ordering and so by applying Lemma \ref{log2ineq} we get $|U_{<i}| \cdot f(|U_{<i}|) + |U_i| \leq |U_{<i+1}| \cdot f(|U_{< i+1}|)$. Repeatedly applying this as $i$ increases gives us $\bad {U} \leq |U| \cdot f(|U|)$, noting that $U_{<m+1} = U$. This completes the proof.
\end{proof}

Let us remark by combining the two cases in the proof above that $|U|\geq 2^{-25}k\log_2^{-2}(k+1)$.\\
We are finally able to prove Theorem \ref{thm: NT-distribution-version}. The proof is very similar to that of Theorem \ref{5.2-case} but, as the details are involved, for completeness we will go through it with care. 

\begin{proof} [\textbf{Proof of Theorem
\ref{thm: NT-distribution-version}}] 
    We will prove a slightly 
    more convenient statement, namely that given the hypothesis there is a set $U \subset V(G)$ and a distribution ${\cal D}$ on $[0.1,0.9]^{V(G)}$ such that $|U|\geq 2^{-26}(k+k^{3/4})\log_2^{-2}(k+1)$ and $\bad{U}\leq 8|U|\log_2|U|$. 
    We will prove this by induction on $|V(G)|$. Note that the 
    theorem trivially holds in the first case where the hypothesis applies, when $n=20000$ and $k =1$ (taking $U$ to be 
    any sets of size $1$ and ${\cal D}$ the trivial distribution).
    Also, as in Theorem \ref{longest-ever}, when $k$ is small there is nothing to prove, so we can assume $k\geq 2^{25}$. 

Let $V_0$ be a largest vertex set of $G$ such that $|\diver{u,v}|\geq 2k^{3/2}$ for all $u,v\in V_0$. If $|V_0|\geq k+1$ then we are done by Lemma \ref{lem: div-approx}, otherwise assume that $V_0=\{v_1,v_2,\ldots, v_L\}$ for some $L\leq k$ and now for each $i\in [L]$ define the set $V_i:=\{v\in V(G): |\diver{v,v_i}| < 2k^{3/2}\}$. Due to the maximality of $S_0$ we get $V(G)=\bigcup_{i=1}^L V_i$. 
The proof splits into the two already familiar cases:\vspace{2mm}

\noindent {\textbf {Case I:}}  Every $j\in [L]$ with $d_G(v_j)\in [{nk^{-1/3}},n-1-nk^{-1/3}]$ 
satisfies $|V_j| \leq 3k$.\vspace{2mm}

We have seen that there are at most $3k^2$ vertices of $G$ that do not lie in a set $V_j$ of size at least $3k$. Moreover, $d_G(v_i)-2k^{3/2}< d_G(v)< d_G(v_i)+2k^{3/2}$ for all $i\in[L]$ and $v\in V_i$. Therefore, at least $n - 3k^2$ vertices $v\in V(G)$ have degree $d_G(v) \notin [nk^{-1/3}+2k^{3/2},n-1-nk^{-1/3}-2k^{3/2}]$. Therefore for all such vertices we have $d_G(v) \leq nk^{-1/3}+2k^{3/2}$ or $d_G(v) \geq n-1-nk^{-1/3}-2k^{3/2}$. We will assume that at least half of these vertices satisfy the first condition, as otherwise we can follow an identical argument working with $\overline{G}$ instead. Consequently we find a set $V\subset V(G)$ with $|V| \geq (n-3k^2)/2\geq 12n/25$ and $\Delta(G[V])\leq nk^{-1/3}+2k^{3/2}<4|V|k^{-1/3}$. Moreover, $|V|\geq 8000k^2$ and $\hom (G[V]) \leq \hom (G) \leq n/25k \leq |V|/12k$, hence we can apply Theorem \ref{longest-ever} to $G[V]$ to obtain a distribution ${\cal D}_1$ on $[0.1,0.9]^V$ and a vertex set $U \subset V$ with $\bad{U}\leq 8|U|\log_2|U|$ and $|U|\geq 2^{-25}k\log_2^{-2}(k+1)>2^{-26}(k+k^{3/4})\log_2^{-2}(k+1)$ . We also set ${\cal D}_0 := {\cal T}_{V(G) \setminus V}$, i.e. the trivial distribution induced by $V(G) \setminus V$, and let ${\cal D} := {\cal D}_0 \times {\cal D}_1$ denote the product distribution on $[0.1,0.9]^{V(G)}$. By making use of Lemma \ref{lem: product-dist-lem} we can, once again, obtain $\bad {U} \leq \baddd {D}{1}{V}{U} \leq 8|U|\log_2|U|$, as required.\vspace{2mm}

\noindent {\textbf {Case II:}}  There is $j\in [L]$ such that 
$d_G(v_j)\in [{nk^{-1/3}},n-1-nk^{-1/3}]$ and $|V_j| \geq 3k$.\vspace{2mm}

As in Theorem \ref{5.2-case}, pick a subset $V$ of $V_j$ of size $3k$ such that $v_j\in V$, then set  ${X_1}:=N(v_j)\setminus V$ and ${X_2}=:V(G)\setminus(V\cup N(v_j))$. We note that both $|{X_1}|, |{X_2}| \geq nk^{-1/3}-3k$. The same double counting argument from Theorem \ref{5.2-case} works here to give us the sets $Y_1=\{u\in X_1:d_G^V(u)\geq 2k\}$ and $Y_2=\{u\in {X_2}:d_G^V(u)\leq k\}$, both of size at least $nk^{-1/3}-3k-6k^{3/2}>4096k^{3/2}>2^{48}$, such that $|X_i\setminus Y_i|\leq 6k^{3/2}$ for each $i\in\{1,2\}$. Since $V(G) = V \cup X_1 \cup {X_2}$ is a partition, this shows that $Z:=V(G)\setminus(Y_1 \cup Y_2)$ satisfies $|Z|\leq 2\cdot 6k^{3/2}+3k\leq 15k^{3/2}$. 

\indent To complete the proof we will apply the induction hypothesis to both $Y_1$ and $Y_2$. Let $t_i:=|Y_i|$ for
 $i\in \{1,2\}$ and set $k_i:=k(t_i/n) \leq k$. This gives us $\hom (G[Y_i]) \leq \hom (G) \leq n/25k=t_i/25k_i$. We also have $t_ik_i^{-2}\geq k_i^{-1}(t_i/k_i)=k_i^{-1}(n/k)\geq nk^{-2}\geq 20000$. Therefore, for $i=1,2$ we can apply the induction hypothesis to $G[Y_i]$ to find a probability distribution $\mathcal{D}_i$ on $[0.1,0.9]^{Y_i}$ and a set $U_i\subset Y_i$ which satisfies $|U_i|\geq 2^{-26}(k_i+k_i^{3/4})\log_2^{-2}(k_i+1)$ and:
   \begin{align}
        \label{eqn: final inductive-control}
    \baddd{D}{i}{}{U_i}\leq |U_i| \cdot f(|U_i|),\ \ \ \text{where }f(x):=8\log_2x.
   \end{align}
   \indent Let us remark that if $k_i< 2^{33}$ then any set $U_i\subset Y_i$ of size $2\geq 2^{-25}k_i\log_2^{-2}(k_i+1)$ trivially satisfies \eqref{eqn: final inductive-control}, as already noted many times before, thus the above computations all make sense.\\
   \indent We can also assume that $\max \{|U_1|, |U_2| \} \leq k$, as otherwise the theorem follows immediately by just taking $U$ to equal one of these sets.
   
\indent We will also let ${\cal D}_0 := {\cal U}_V$ denote the uniformly constant distribution on $[0.1,0.9]^V$ and let ${\cal D}_3 := {\cal T}_{Z}$ denote the trivial $Z$-induced distribution. We now set $U:=U_1\cup U_2$ and let ${\cal D}$ denote the product distribution $\prod _{i\in [0,3]} {\cal D}_i$ on $[0.1,0.9]^{V} \times \prod _{i\in [2]} [0.1,0.9]^{Y_i}\times [0.1,0.9]^Z= [0.1,0.9]^{V(G)}$. 

Note that $d^V_G(u) \geq 2k \geq d_G^V(v) + k$ for all $u \in Y_1$ and $v \in Y_2$, by definition of $Y_1$ and $Y_2$. It then follows from Lemma \ref{lem: uniform-distribution-control} that for all such vertices we have:
    \begin{align}
        \label{eqn: final split-control-to-V}
        \baddd {D}{0}{V}{u,v} \leq \frac {3}{k}.
    \end{align}
    \indent As $V(G)=Y_1\cup Y_2\cup Z$ is a partition and $|Z|\leq 15k^{3/2}$, we get $t_1+t_2\geq n-15k^{3/2}$, therefore $k_1+k_2\geq k-15n^{-1}k^{5/2}\geq k-\sqrt{k}/300$. Moreover, recall $t_i\geq nk^{-1/3}-3k-6k^{3/2}>nk^{-1/3}/2$, hence $k_i = k(t_i/n) \geq k^{2/3}/2$ for $i\in\{1,2\}$. By using Lemma \ref{oprimization-map} we obtain that:
    $$k_1^{3/4}+k_2^{3/4}\geq \frac{\sqrt{k}}{\sqrt[4]{8}}+\left(k-\frac{\sqrt{k}}{300}-\frac{\sqrt[3]{k^2}}{2}\right)^{3/4}>
    \frac{\sqrt{k}}{2}+k^{3/4}\cdot \left(1-\frac{2k^{-1/3}}{3}\right)^{3/4}.$$
    \indent Using the inequalities $1-t\geq \exp(-2t)$ and $\exp(-t)\geq 1-t$, which hold for any $t\in [0,0.5]$ and in particular for $t=\Theta(k^{-1/3})$, we can further deduce that:
    \begin{align}\label{concavity-ineq}
     k_1^{3/4}+k_2^{3/4}\geq \frac{\sqrt{k}}{2}+k^{3/4}\cdot \exp\left(-k^{-1/3}\right)>\frac{\sqrt{k}}{2}+k^{3/4}-k^{-5/12}>k^{3/4}+\frac{\sqrt{k}}{300}.
     \end{align}
    \indent We are now in a position to lower bound the size of $U$:
\begin{align*}
    |U| = |U_1|+|U_2|
    &\geq \frac{1}{2^{26}}\cdot \left(\frac{k_1+k_1^{3/4}}{\log_2^2(k_1+1)}+\frac{k_2+k_2^{3/4}}{\log_2^2(k_2+1)}\right)\geq\\
     &\geq \frac{1}{2^{26}\log_2^2(k+1)} \cdot \big(k_1+k_1^{3/4}+k_2+k_2^{3/4}\big)\overset{\eqref{concavity-ineq}}{\geq} \\
    &\geq \frac{1}{2^{26}\log_2^2(k+1)} \cdot \left(k_1+k_2+\frac{\sqrt{k}}{300}+k^{3/4}\right)\geq \frac{k+k^{3/4}}{2^{26}\log_2^2(k+1)}.
\end{align*}
\indent Finally, we are able to estimate $\bad{U}$ as follows: 
\begin{align*}
    \bad{U}
       & = \sum_{\{u,v\}\subset U_1}\bad{u,v}+\sum_{\{u,v\}\subset U_2}\bad{u,v}+\sum_{(u,v)\in U_1 \times U_2}\bad{u,v}\\
    &=\bad{U_1}+\bad{U_2}+|U_1||U_2|\cdot \max _{(u,v)\in U_1 \times U_2} \big \{\bad{u,v} \big \}\\
    &\leq \baddd{D}{1}{}{U_1}+\baddd{D}{2}{}{U_2}+|U_1||U_2| \cdot \max _{(u,v)\in U_1 \times U_2} \big \{\baddd {D}{0}{V}{u,v} \big \}\\
    &\leq 
    |U_1|\cdot f(|U_1|) + |U_2| \cdot f(|U_2|) + \frac {3}{k} \cdot |U_1||U_2| \leq |U|\cdot f(|U|).
\end{align*}
\indent The final three inequalities here respectively follow from Lemma \ref{lem: product-dist-lem}, then from 
\eqref{eqn: final inductive-control} and \eqref{eqn: final split-control-to-V}, and lastly using that $\max \{|U_1|,|U_2|\} \leq k$ and Lemma \ref{log2ineq}. This completes the proof.
\end{proof}

\section{Distinct degrees in random graphs}
\label{sec: random graph distinct degrees}

In this section we will study $f(G(n,p))$, the number of distinct degrees which can be found in an induced subgraph of the Erd\H{o}s$-$R\'enyi random graph $G(n,p)$. Our results extend the estimates for the case of constant $p$ due to Bukh and Sudakov \cite{bukh} and to Conlon, Morris, Samotij and Saxton \cite{unpublished}. We restate Theorem \ref{thm: DD for G(n,p)} for the reader's convenience. 

\randomdistinctdegrees*

Although the estimation of $f(G(n,p))$ is quite natural in itself, we believe, as discussed in the concluding remarks, that the behaviour for $p \in [n^{-1/2}, 1/2]$ essentially determines the extremal relationship between $\hom (G)$ and $f(G)$ beyond the range of the Narayanan--Tomon conjecture, when $\hom (G) < n^{1/2}$. As a result, our calculations will focus on the case (i) of Theorem \ref{thm: DD for G(n,p)}. The next subsection contains the proof the upper bound on $f(G(n,p))$ in this case, whereas the second subsection contains the more difficult lower bound. In the final subsection we briefly indicate how to approach the case when $p \leq n^{-1/2}$.

\subsection{Upper bound on \texorpdfstring{$f(G(n,p))$}{f(G(n,p))}}

In this subsection we prove the upper bound on $f(G(n,p))$. Our approach closely follows that of Bukh and Sudakov (see Proposition 2.4 in \cite{bukh}), but we include the complete details, as the estimates are more involved in the sparse case.

\begin{prop}
Given $n \in {\mathbb N}$ and $p \in [n^{-1/2}, 1/2]$, one has $f\big(G(n,p)\big)=O\big(\sqrt[3]{pn^2}\big)$ whp.
\end{prop}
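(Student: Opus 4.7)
The plan is to follow the Bukh--Sudakov strategy, bounding $\bP[f(G(n,p)) \geq K]$ for $K := C\sqrt[3]{pn^2}$ (with $C$ a large constant to be chosen) by a union bound over all pairs $(U, S)$ with $U \subseteq S \subseteq V$ and $|U| = K$, of the probability that $d_{G[S]}(u_1), \ldots, d_{G[S]}(u_K)$ are pairwise distinct. The key reduction is that $K$ pairwise distinct integers have sample variance at least $(K^2-1)/12$, so distinct degrees on $U$ force
\[ \sum_{i=1}^{K} \big(d_{G[S]}(u_i) - \bar d\big)^2 \geq \frac{K(K^2-1)}{12} \geq \frac{K^3}{24}. \]

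For fixed $(U, S)$ I would write $d_{G[S]}(u_i) = a_i + Y_i$, where $a_i := |N(u_i) \cap U|$ depends only on the $\binom{K}{2}$ edges inside $U$ (and $|a_i| \leq K$), while $Y_i := |N(u_i) \cap (S \setminus U)|$ are jointly independent $\mathrm{Bin}(|S|-K, p)$ random variables, independent also of all $a_j$'s. Applying $(x+y)^2 \leq 2x^2 + 2y^2$ pointwise and summing, the lower bound above forces at least one of the events
\[ E_1 = \Big\{ \sum_i (a_i - \bar a)^2 \geq K^3/96 \Big\}, \qquad E_2 = \Big\{ \sum_i (Y_i - \bar Y)^2 \geq K^3/96 \Big\}. \]
For $E_1$: each edge flip inside $U$ changes $\sum(a_i - \bar a)^2$ by $O(K)$, so McDiarmid's inequality yields $\bP[E_1] \leq \exp(-cK^2)$. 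For $E_2$: I would use $\sum(Y_i - \bar Y)^2 \leq \sum(Y_i - \bE Y_i)^2$ (since $\bar Y$ minimises the sum of squared residuals) to reduce to a sum of $K$ independent centred sub-exponential random variables of scale $O(|S|p)$; Bernstein's inequality then gives
\[ \bP[E_2] \leq \exp\Big(-c \min\Big(K^5/(|S|p)^2,\; K^3/(|S|p)\Big)\Big). \]
In the regime $K^2 \geq |S|p$ (which holds for $K = C\sqrt[3]{pn^2}$ and $p \in [n^{-1/2}, 1/2]$), the minimum equals $K^3/(|S|p) \geq C^3 n$, so $\bP[E_2] \leq \exp(-cC^3 n)$.

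Summing over the at most $2^n \binom{n}{K}$ pairs $(U, S)$ and observing that $\log\binom{n}{K} \leq K \log(en/K) = o(n)$ throughout our regime, choosing $C$ large enough that $cC^3 > \log 2$ makes the union bound $o(1)$. The main obstacle I expect is the $E_2$ step: a naive pairwise-collision argument (Janson, or step-by-step conditioning as in the proof of Lemma \ref{lem: probdegcontrol}) yields only the weaker threshold $K \gtrsim n^{3/4}p^{1/4}$, which is too large. To reach the sharp $\sqrt[3]{pn^2}$ one has to exploit that, once we condition on $G[U]$, the variables $Y_i$ are \emph{genuinely independent}, and apply Bernstein to the quadratic statistic $\sum(Y_i - \bE Y_i)^2$ in its sub-exponential (not merely sub-Gaussian) tail regime. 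The hypothesis $p \geq n^{-1/2}$ enters naturally to ensure $K^2 \geq |S|p$, keeping the Bernstein tail in the linear piece $\exp(-cK^3/(|S|p))$, which is exactly what is needed to beat the $2^n$ factor in the union bound; below this threshold the linear piece ceases to dominate, and one has to pass to the alternative bound given by part (ii) of Theorem \ref{thm: DD for G(n,p)}.
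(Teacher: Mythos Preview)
Your $E_2$ step has a genuine gap. The claim that $(Y_i-\bE Y_i)^2$ is sub-exponential with scale $O(|S|p)$ is false for $Y_i\sim\mathrm{Bin}(m,p)$. The centred Binomial has sub-Gaussian tails with variance proxy $mp$ only in the bulk; for deviations $s\gg mp$ one has the Poisson-type bound $\bP[Y_i\geq mp+s]\approx(emp/s)^s$, not $\exp(-s^2/(2mp))$. Hence $\bP[(Y_i-mp)^2\geq u]\approx\exp\big(-\sqrt{u}\log(\sqrt{u}/(emp))\big)$ for $u>(mp)^2$, much heavier than $\exp(-u/(Cmp))$. This matters because a \emph{single} $Y_i$ with $|Y_i-mp|\geq K^{3/2}$ already gives $E_2$. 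At $p=n^{-1/2}$, $|S|=n$, one has $K^{3/2}=\Theta(n^{3/4})$ while $K^{3/2}/(np)=\Theta(p^{-1/2})=\Theta(n^{1/4})$, so such an outlier occurs with probability at least $\exp(-\widetilde O(n^{3/4}))$ --- nowhere near small enough to beat the $2^n$ from the union bound over $S$. (Your bound would be correct if the $Y_i$ were Gaussian with variance $mp$; the failure is precisely the non-Gaussianity of sparse Binomials.)

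The paper's proof, following Bukh--Sudakov, sidesteps this by linearising: from $8b$ distinct degrees in $G[A]$ it extracts a set $B$ of $b$ vertices all with degree $\geq p|A|+3b$ (or all $\leq p|A|-3b$), and then bounds the single Binomial $e(A\setminus B,B)$ via Chernoff. The required deviation is $\Theta(b^2)$ from a mean $\Theta(|A|bp)$, so the Chernoff exponent is $\Theta(b^4/(|A|bp))=\Theta(b^3/(np))=\Theta(n)$, exactly what is needed. Your variance lower bound for distinct integers is an appealing starting point, but the quadratic statistic $\sum(Y_i-mp)^2$ inherits the heavy upper tail of a single squared Binomial, which is fatal for the union bound at small $p$; replacing it by a linear edge-count as the paper does is essentially forced.
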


\begin{proof}
Suppose $G\sim G(n,p)$ has a subset $A\subset V(G)$ of size $a$ such that $G[A]$ has $8b$ distinct degrees, where $b = 16\sqrt[3]{pn^2}$. As at most $6b-1$ of our distinct degrees can lie in the interval $(pa-3b,pa+3b)$, either there are at least $b$ vertices of $A$ that have degree at least $pa+3b$ or at least $b$ vertices that have degree at most $pa-3b$. 

\indent We will assume first that we are in the former case, as this is the more intricate one. Let $B\subset A$ be a set of $b$ vertices which all have degree at least $pa+3b$. Let us now look at the number $e(A,B)$ of edges with one endpoint in $A$ and one in $B$ (those with both their endpoints in $B$ will be counted twice since $B\subset A$). We then have $pab+3b^2\leq e(A,B)=2e(B)+e(A\setminus B,B)$. As $|B| = b$ implies $e(B) < b^2$, we find that: \begin{align}
\label{eqn: unlikely inequality}
    e(A\setminus B, B) \geq pab + b^2 \geq p(a-b)b + b^2.
\end{align}
\indent Letting $F$ denote the event that there are sets $A$ and $B$ which satisfy \eqref{eqn: unlikely inequality}, it suffices to show that ${\mathbb P}(F) = o(1)$. To see this, first suppose that $16p(a-b)\geq b$. As $\bE[e(A\setminus B,B)]=p(a-b)b$, by using Chernoff's Inequality with $\delta={b}/{16p(a-b)}\leq 1$ we get that: 
\begin{align*}
\bP\left(e(A\setminus B,B)\geq pb(a-b)+b^2\right) & \leq 
\bP\left(e(A\setminus B,B)\geq pb(a-b)+2^{-4}b^2\right)\\
&\leq \exp\left(\dfrac{-b^3}{2^{10}p(a-b)}\right) 
\leq \exp\left(\dfrac{-b^3}{2^{10}pn}\right),
\end{align*}
where the final inequality uses that $a-b\leq a\leq n$. Therefore, the union bound implies that event $F$ can happen with probability at most: $$
{\mathbb P}(F) \leq 2^n\cdot \dbinom{n}{b}\cdot \exp\left(\dfrac{-b^3}{4\cdot pn}\right) 
    \leq 
2^{2n} \cdot \exp\left(\dfrac{-b^3}{2^{10}\cdot pn}\right).$$
\indent This tends to zero as $n \to \infty $, as $b \geq 16\sqrt [3]{pn^2}$.

\indent Now suppose instead that  $16p(a-b)< b$. As $e(A\setminus B,B)$ has binomial distribution, we have: $$\bP\left(e(A\setminus B,B)\geq pb(a-b)+b^2\right)\leq \dbinom{b(a-b)}{b^2}\cdot p^{b^2}\leq \left(\dfrac{2ep(a-b)}{b}\right)^{b^2}\leq 2^{-b^2}.$$
\indent Recalling that $p \geq n^{-1/2}$ and that $b \geq 16 \sqrt [3] {pn^2} \geq 
16\sqrt [3]{n^{-1/2}n^2} \geq 16\sqrt n$, using the union bound we find that the event $F$ occurs with probability at most:
$$ {\mathbb P}(F) \leq 2^n\binom {n}{b} 2^{-b^2} \leq 2^{2n}\cdot 2^{-(16\sqrt n)^2}.$$ 
\indent Hence, it follows again that ${\mathbb P}(F) = o(1)$ in this second case.

\indent Finally, if there is a set $B\subset A$ of $b$ vertices that all have degree at most $pa-3b$ in $G[A]$, then $e(A\setminus B,B)\leq e(A,B)\leq b(pa-3b) \leq pb(a-b)- b^2$. If $p(a-b)< b$ then it is clear that such a set $B$ exists with $0$ probability since we cannot have a negative number of edges. Otherwise we can simply apply Chernoff's Inequality for $\delta =\dfrac{b}{p(a-b)} \leq 1$ to get: $$\bP\bigg(e(A\setminus B ,B )\leq pb(a-b)-b^2\bigg)\leq \exp\left(-\dfrac{b^3}{2p(a-b)}\right)\leq \exp\left(-\dfrac{b^3}{2pn}\right),$$ 
where the last inequality follows as $a-b\leq a\leq n$. We have seen before that the union bound gives us probability of at most $2^n\cdot \binom{n}{b}\cdot \exp\left(-b^3/2pn\right)$ for such a set $B$ to exist and we have shown in the previous case that this probability tends to $0$ as $n\to \infty$.
\end{proof}

\subsection{Lower bound on \texorpdfstring{$f(G(n,p))$}{f(G(n,p))}}

We now focus on proving our sharp lower bound for $f(G(n,p))$. Before we start, we will present a few results that will help us along the way.\\
\indent Given $D>0$ and a graph $G$, we call a set $U\subset V(G)$ $D$-\emph{diverse} if it is $D$-diverse to $V(G)$. We say that the graph $G$ is $D$-\emph{diverse} if $V(G)$ is $D$-diverse (see Section 4 before Lemma \ref{lem: blended-distribution-control}).

\begin{prop}\label{deg-ord-np}
If $p\gg \log n/n$ then all vertices of $G(n,p)$ have degree asymptotic to $np$ whp. In particular, whp they all have degrees less than $2np$.
\end{prop}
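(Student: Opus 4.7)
The plan is the standard one: for each fixed vertex the degree is binomially distributed, Chernoff's inequality (Theorem~\ref{cher}) gives exponential concentration about the mean, and the hypothesis $p \gg \log n / n$ is precisely strong enough for a union bound over the $n$ vertices to still be $o(1)$.

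First I would fix a vertex $v \in V(G(n,p))$ and note that $d(v) \sim \mathrm{Bin}(n-1, p)$ with $\mu := (n-1)p = (1+o(1))np$. Combining the two bounds of Theorem~\ref{cher}, for any $\delta \in (0,1]$ one has
\begin{equation*}
\bP\bigl(|d(v) - \mu| \geq \delta \mu\bigr) \leq 2\exp\!\bigl(-\delta^2 \mu / 4\bigr).
\end{equation*}

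Next I would turn this into a uniform statement. Since $p \gg \log n / n$ means $np/\log n \to \infty$, I can choose a sequence $\omega_n \to \infty$ slowly enough that $\omega_n \log n = o(np)$, and set $\delta_n := 2\sqrt{\omega_n \log n / \mu}$. This choice makes $\delta_n = o(1)$ while $\delta_n^2 \mu / 4 = \omega_n \log n$, so the Chernoff bound above becomes $2n^{-\omega_n}$. A union bound over all $n$ vertices then gives
\begin{equation*}
\bP\bigl(\exists v \in V(G(n,p)) : |d(v) - \mu| \geq \delta_n \mu\bigr) \leq 2 n^{1-\omega_n} = o(1).
\end{equation*}
Hence whp every vertex satisfies $d(v) = (1 \pm \delta_n)\mu = (1+o(1))np$, which is the first claim; the ``in particular'' statement is then immediate, since for all sufficiently large $n$ one has $(1+o(1))np < 2np$.

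There is really no obstacle to speak of here — the only small point of care is to pick $\omega_n$ slowly enough that both $\delta_n = o(1)$ (which needs $\omega_n \log n = o(np)$) and $n^{1-\omega_n} = o(1)$ (which needs $\omega_n \to \infty$); any $\omega_n$ growing slower than $np / \log n$ but tending to infinity will do, e.g. $\omega_n = \sqrt{np/\log n}$.
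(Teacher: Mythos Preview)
Your proof is correct and follows essentially the same route as the paper: note that each degree is $\mathrm{Bin}(n-1,p)$, apply Chernoff's inequality with a deviation parameter of order $\sqrt{\log n /(np)}$, and finish with a union bound over the $n$ vertices. The only cosmetic difference is that the paper fixes the constant (taking $\delta = 3\sqrt{\log n /(np)}$ to get a per-vertex bound of $2n^{-9/2}$), whereas you introduce an auxiliary $\omega_n \to \infty$; both choices work and the underlying argument is identical.
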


\begin{proof}
Let $u$ be a vertex of $G(n,p)$. Then $d_{G(n,p)}(u)\sim Bin(n-1,p)$, so we can apply Chernoff's Inequality for $\delta =3\sqrt{\dfrac{\log n}{np}}$ to get $\bP\big(|d(u)-np|\geq 3\sqrt{np\log n}\big)\leq 2n^{-9/2}$. The result now follows by the union bound. The last part is a consequence of the fact that $\delta \leq 1$. 
\end{proof}

\begin{lem}\label{random-diversity}
If $p\gg \log n/n$ and $p\leq 1/2$ then whp $G(n,p)$ is $p(n-1)$-diverse.
\end{lem}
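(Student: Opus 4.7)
The plan is a pairwise Chernoff bound combined with a union bound over vertex pairs. Fixing distinct $u,v \in V(G(n,p))$, it will suffice to show $\bP\bigl[\,|N(u) \triangle N(v)| < p(n-1)\,\bigr] = o(n^{-2})$ and then union bound over the $\binom{n}{2}$ pairs of vertices; the lemma follows immediately.

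For the pairwise estimate I would decompose the symmetric difference into independent Bernoullis. For each $w \in V(G)\setminus\{u,v\}$, the indicator $Y_w := \mathbb{1}[w \in N(u) \triangle N(v)]$ fires iff exactly one of the two edges $uw$ and $vw$ is present, so $Y_w \sim \mathrm{Be}(2p(1-p))$, and the $Y_w$ are mutually independent as $w$ ranges over $V(G)\setminus\{u,v\}$ because they involve pairwise disjoint pairs of edges. Writing $X := \sum_{w \neq u,v} Y_w \sim \mathrm{Bin}\bigl(n-2,\, 2p(1-p)\bigr)$, we have $|N(u) \triangle N(v)| \geq X$. The hypothesis $p \leq 1/2$ then gives $2(1-p) \geq 1$, so $\bE[X] \geq p(n-2)$.

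Next I would apply Chernoff's inequality (Theorem \ref{cher}) to $X$ with a small multiplicative deviation $\delta > 0$ to obtain $\bP[X \leq (1-\delta)\bE[X]] \leq \exp(-\delta^2 \bE[X]/2)$. As the hypothesis $p \gg \log n/n$ gives $\bE[X] = \Theta(pn) \gg \log n$, one can choose $\delta$ so that $(1-\delta)\bE[X] \geq p(n-1)$ while keeping $\delta^2 \bE[X] \gg \log n$; this produces a per-pair failure probability of $o(n^{-2})$, and the union bound over $\binom{n}{2}$ pairs closes the argument.

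The delicate point is the size of the margin $\bE[X] - p(n-1) = p\bigl(n(1-2p)-1\bigr)$, which is a comfortable $\Theta(pn)$ for $p$ bounded away from $1/2$ but shrinks sharply as $p \to 1/2^-$. In this borderline regime one would additionally track the $+2\cdot\mathbb{1}[uv \in E]$ contribution in the exact identity $|N(u)\triangle N(v)| = X + 2\mathbb{1}[uv \in E]$, which lifts the expectation of the full symmetric difference to $\bE[|N(u)\triangle N(v)|] - p(n-1) = p\bigl(n(1-2p) + (4p-1)\bigr) \geq 0$ for every $p \leq 1/2$, and fold this extra independent Bernoulli into the concentration step. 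This boundary bookkeeping is the only real technical hurdle; outside a vanishing neighbourhood of $p = 1/2$ the argument is the standard Chernoff-plus-union-bound template.
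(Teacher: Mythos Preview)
Your approach is essentially the paper's own: write $|N(u)\triangle N(v)|$ as (a lower bound on) a $\mathrm{Bin}(n-2,\,2p(1-p))$ random variable, apply Chernoff for a per-pair failure probability that is polynomially small in $n$, and union bound over the $\binom{n}{2}$ pairs. The paper does exactly this with the explicit choice $\delta = 3\sqrt{\log n /(nq)}$, $q = 2p(1-p)$, obtaining failure probability $n^{-9/2}$ per pair, and closes with the line ``since $q \geq p$ and $\delta \to 0$''.

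You are right to flag the boundary $p \to 1/2$, which the paper glosses over with that final sentence. In fact at $p = 1/2$ the gap $\bE[|N(u)\triangle N(v)|] - p(n-1)$ is only $\Theta(1)$ against fluctuations of order $\sqrt{n}$, so neither Chernoff nor your proposed fix of folding in the $uv$-edge indicator (which only shifts the mean by $O(1)$) can rescue the exact constant; the lemma as literally stated does not hold at $p = 1/2$. This is cosmetic, however: the only downstream use (Lemma~\ref{bded-u-degree}) needs merely $\Theta(pn)$-diversity, for which the Chernoff-plus-union-bound argument works uniformly over all $p \leq 1/2$ with no boundary difficulty.
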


\begin{proof}
Let us first notice that $|\text{div}(u,v)|\sim Bin(n-2,q)$ for all distinct $u,v\in V(G)$, where $q:=2p(1-p)$. This holds as every vertex of $\text{div}(u,v)$ has to be in $N(u)\setminus N(v)$ or in $N(v)\setminus N(u)$ and these two possibilities represent disjoint events that happen with probability $p(1-p)$.\\
\indent Note that $q\geq p\gg \log n/n$ and so we can apply Chernoff's Inequality for $\delta=3\sqrt{\dfrac{\log n}{nq}}$ to obtain $\bP\big(|\text{div}(u,v)|\leq nq-3\sqrt{nq\log n}\big)\leq n^{-9/2}$. By the union bound, we therefore deduce that $|\text{div}(u,v)|>nq(1-\delta)$ for any $u,v\in V(G)$ whp. Since $q\geq p$ and $\delta\to 0$, we get that $G(n,p)$ is $p(n-1)$-diverse whp in this case.   
\end{proof}

\begin{lem}\label{bded-u-degree}
Let $n \in {\mathbb N}$ and $p \in [n^{-1/2}, 1/2]$ and suppose $G\sim G(n,p)$ and let $V(G):=U\sqcup S$ be a vertex partition such that $\sqrt {n} \leq 4 |U| \leq pn$. Then, with high probability, there is a subset $W\subset S$ such that $U$ is $pn/3$-diverse to $W$ and $d^U_G(w)\leq 10p|U|$ for all $w\in W$.   
\end{lem}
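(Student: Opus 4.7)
The plan is to take $W := \{ w \in S : d^U_G(w) \leq 10p|U|\}$, so that the degree bound on $W$ holds by construction; it then remains to show that whp $U$ is $pn/3$-diverse to this $W$. Setting $B := S \setminus W$, the identity $N^W_G(u) \triangle N^W_G(v) = (N^S_G(u) \triangle N^S_G(v)) \setminus B$ gives, for distinct $u, v \in U$, the sub-additive bound
\begin{align*}
|N^W_G(u) \triangle N^W_G(v)| \geq |N^S_G(u) \triangle N^S_G(v)| - |N^B_G(u)| - |N^B_G(v)|,
\end{align*}
reducing the task to two whp estimates: (a) $|N^S_G(u) \triangle N^S_G(v)| \geq pn/2$ for all distinct $u, v \in U$, and (b) $|N^B_G(u)| \leq pn/12$ for all $u \in U$.

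Step (a) is routine: $|N^S_G(u) \triangle N^S_G(v)| \sim \mathrm{Bin}(|S|, 2p(1-p))$ has mean at least $p|S| \geq 7pn/8$ (using $p \leq 1/2$ and $|S| \geq n - pn/4 \geq 7n/8$), so Chernoff gives failure probability $\exp(-\Omega(pn))$; since $pn \geq \sqrt n \gg \log n$, a union bound over the $\binom{|U|}{2}$ pairs finishes it. For (b), fix $u \in U$ and write $|N^B_G(u)| = \sum_{w \in S} Y_w$, where $Y_w$ is the indicator of $\{u \sim w\} \cap \{w \in B\}$; the $Y_w$ are independent across $w$ since they depend on disjoint edge sets. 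Conditioning on the edge $uw$ and applying the factorial bound for binomial tails,
\begin{align*}
\bP(Y_w = 1) = p \cdot \bP\big(d^{U \setminus \{u\}}_G(w) \geq k\big) \leq p \cdot (p|U|)^k / k!, \qquad k := \lfloor 10 p|U| \rfloor.
\end{align*}
Since $p|U| \geq 1/4$ (from $|U| \geq \sqrt n / 4$ and $p \geq n^{-1/2}$), one has $k \geq 2$, and a quick case analysis shows $(p|U|)^k/k! \leq 9/200$ uniformly (the maximum is attained at $k = 2$, $p|U| \to 3/10^-$, giving $9/200$; for $k \geq 3$ the values drop off rapidly via Stirling). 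Hence $|N^B_G(u)|$ is stochastically dominated by $\mathrm{Bin}(|S|, p/20)$, with mean exceeding $7pn/160$, and Chernoff (with $1+\delta = 160/84$) yields $\bP(|N^B_G(u)| > pn/12) \leq \exp(-\Omega(pn))$; a union bound over $u \in U$ completes (b).

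The main obstacle is step (b): the uniform bound $\bP(Y_w = 1) \leq p/20$ is only a constant factor below the deviation threshold $p/12$ used in the concentration step, so the looser estimate $(e/10)^{10p|U|}$ arising from Theorem \ref{binomial-bound} is borderline, and the sharper factorial bound $(p|U|)^k/k!$ with case analysis in $k$ is needed to obtain the strict inequality that lets Chernoff close the gap. Once (a) and (b) are established, combining them gives $|N^W_G(u) \triangle N^W_G(v)| \geq pn/2 - pn/12 - pn/12 = pn/3$ whp for all distinct $u, v \in U$, as desired.
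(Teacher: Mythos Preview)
Your proof is correct and takes a genuinely different decomposition from the paper. Both arguments set $W = \{w \in S : d^U_G(w) \leq 10p|U|\}$ and $B = S \setminus W$, but then diverge: the paper works \emph{per pair}, bounding $\bP\big(|\mathrm{div}(u_1,u_2)\cap B| > \tfrac{1}{3}|\mathrm{div}(u_1,u_2)|\big)$ directly for each $\{u_1,u_2\}\subset U$, using only the crude estimate $\bP(w\in B)\leq (e/10)^{10p|U|}<1/10$; you instead work \emph{per vertex}, bounding $|N^B_G(u)|$ for each $u\in U$ and then subtracting via $|N^W(u)\triangle N^W(v)|\geq |N^S(u)\triangle N^S(v)|-|N^B(u)|-|N^B(v)|$.

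Your route is cleaner on the independence side (the variables $Y_w$ are transparently independent for fixed $u$, whereas the paper's appeal to a binomial law for $|\mathrm{div}(u_1,u_2)\cap B|$ glosses over the mild conditioning between membership in $\mathrm{div}(u_1,u_2)$ and membership in $B$). The price you pay is exactly the one you identify: since you need $\bE|N^B(u)|$ strictly below $pn/12$, the bound $(e/10)^{10p|U|}$ does not suffice at the extreme $p|U|=1/4$ (it gives $\approx 0.115 > 1/12$), so the sharper factorial tail $(p|U|)^k/k!$ with your case analysis is genuinely needed. The paper avoids this because its threshold is $\tfrac{1}{3}|\mathrm{div}(u_1,u_2)|$ rather than $pn/12$, leaving more slack. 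Both approaches are valid; yours trades a slightly more delicate numerical estimate for a more transparent independence structure.
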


\begin{proof}
Define the set $S_B:=\{v\in V:d_G^U(v)\geq 10 p|U|\}$. For any $v\in S$ the random variable $d_G^U(v)$ has distribution $Bin(|U|,p)$, hence, as $p|U|\geq 2.5$, we deduce that: $$\Tilde{p}:=\bP(v\in S_B)\leq (e/10)^{10p|U|}<3^{-2.5}<1/10. $$
\indent Now for each subset $W\subset S$ we see that $|W\cap S_B|$ has distribution $Bin(|W|,\Tilde{p})$. Therefore, for any $u_1,u_2\in U$ we can deduce by using Theorem \ref{binomial-bound} and Lemma \ref{random-diversity} that:
$$\bP\left(|\text{div}(u_1,u_2)\cap S_B|>\frac{|\text{div}(u_1,u_2)|}{3}\right)\leq \left(3e\Tilde{p}\right)^{|\text{div}(u_1,u_2)|/3}\leq \left(\frac{9}{10}\right)^{\sqrt{n}/4}.$$
\indent Call a pair $\{u_1,u_2\}\subset U$ of vertices \emph{big} if $|\text{div}(u_1,u_2)\cap S_B|>|\text{div}(u_1,u_2)|/3$. By using the union bound we immediately deduce that $U$ contains a \emph{bad} pair of vertices with probability at most $p^2n^2/8\cdot (9/10)^{\sqrt{n}/4}\to 0$ as $n\to \infty$. Set now $W:=S\setminus S_B$ and note that whp for all distinct $u_1,u_2\in U$ we have $|\text{div}(u_1,u_2)\setminus S_B|>2|\text{div}(u_1,u_2)|/3\geq 2p(n-1)/3$. We then get:
$$\big|N_G^W(u_1)\triangle N_G^W(u_2)\big|\geq \big|\text{div}(u_1,u_2)\setminus S_B\big|-|U|\geq \frac{2p(n-1)}{3}-\frac{\sqrt{n}}{4}>\frac{pn}{3}.$$
\indent The second property follows directly from the definition of $W$ and so our result is proved.
\end{proof}

  \begin{dfn}
Let $G$ be a $n$-vertex graph and let $0<p\leq 1/2$. We call a set $U$ of vertices of $G$ $p$\emph{-convenient} if $d_G(u)\leq 2pn$ for all $u\in U$ and there is a set $W\subset V(G)\setminus U$ such that $U$ is $pn/3$-diverse to $W$ and $d^U_G(w)\leq 10p|U|$ for all $w\in W$.
  \end{dfn}
  
  \indent We now expose the randomness in $G(n,p)$ and obtain a fixed graph $G$. According to Proposition \ref{deg-ord-np} and Lemma \ref{bded-u-degree}, we may assume that $G$ contains a $p$-convenient set $U$ of size $\sqrt[3]{pn^2}/4$.
  
  \indent At this point, the reader might have already noticed that the $p$-convenient conditions fit in very well with 
  those from Lemma \ref{lem: blended-distribution-control}. Indeed, the set $U$ is $pn/3$-diverse to 
  $W$ and $10p$-balanced, so the hypothesis of the lemma is satisfied. The most natural thing to do would now be to apply the lemma with the blended distribution $\mathcal{B}_\beta(U,W)$. However, 
  in order to obtain a set of $\Theta (|U|)$ degrees, we would like the first term in the RHS of \eqref{eqn:bad-eqn-control} to be of order $|U|^{-1}$, which forces $\beta:=\Theta\left(|U|/pn\right)$. This would then make the second term in the RHS of \eqref{eqn:bad-eqn-control} to be a constant, so it seems that we cannot get the desired `bad' control. One can obtain weaker bounds on $f(G(n,p))$ by altering 
  the parameters here, but there is an unavoidable loss as things stand.
  
  \indent There is though a way around this issue. In Lemma \ref{lem: blended-distribution-control} we solve the problem of coordinates lying outside $[0.1,0.9]$ by dealing with each pair of vertices $\{u_1,u_2\}\subset U$ individually. However, in certain situations it is possible to show that many vertices 
  $u_1 \in U$ are \emph{simultaneously} good for all pairs $\{u_1,u_2\}\subset U$. 
   The crucial twist here is that the diversity term $D:=pn/3$ satisfies $D=\Omega(\Delta(G))$. This allows us to guarantee that a fixed vertex $u_1 \in U$ whp is likely to have 
  no neighbours in $\text{div}(u_1,u_2)$ whose coordinates are 'outliers', and this happens \emph{for all} $u_2 \in U$. The approach here 
  builds upon that of Jenssen, Keevash, Long and Yepremyan \cite{JKLY}.
  
  A slight change in our notation will be convenient below. Given a graph $G$ with vertex partition $V(G)=U\sqcup W$ and a probability vector $\laur{p} = (p_w)_{w \in W} \in [0, 1]^{W}$, we write $G(\laur{p})$ to denote the probability space on the set of induced subgraphs of $G$ \emph{that contain} $U$, where for each vertex set $S \subset W$, the induced subgraph $G[U\cup S]$ is selected with probability $\prod _{v \in S}p_v \prod _{v \in W \setminus S} (1-p_v)$.

\begin{prop}\label{prop-expected-deg}
Let $n \in {\mathbb N}$, $p \in [n^{-1/2}, 1/2]$ and let $G$ be an $n$-vertex graph with a $p$-convenient set $U\subset V(G)$ of size $\sqrt[3]{pn^2}/4$. Then there is a vector $\textbf{\ul{p}}\in [0.1,0.9]^{V(G)\setminus U}$ and a set $U^\prime\subset U$ with $|U^\prime|\geq |U|/500$ so that $\big|\bE[d_{G(\laur{p})}(u_1)]-\bE[d_{G(\laur{p})}(u_2)]\big|\geq 1$ for all distinct $u_1,u_2\in U^\prime$. 
\end{prop}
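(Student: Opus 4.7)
The plan is to build a blended probability distribution on the coordinates of $V(G) \setminus U$, draw a random $\laur{p}$ from it, and then extract a fixed vector $\laur{p}$ together with a suitable subset $U' \subset U$ via an averaging argument. Specifically, I would set $\laur{p}_v := 1/2$ for $v \in V(G) \setminus (U \cup W)$, and put the blended distribution $\mathcal{B}_\beta(U,W)$ from Lemma \ref{lem: blended-distribution-control} on the $W$-coordinates, with $\beta := c|U|/(pn)$ for a small constant $c$ to be chosen. The $p$-convenience of $U$ supplies exactly the $(pn/3)$-diversity and the $10p$-balance of $U$ to $W$ required by Lemma \ref{lem: blended-distribution-control}, and the cubic identity $|U|^3 = pn^2/64$ makes the key parameter $p|U|\beta^2 = c^2/64$ a dimensionless constant.

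The key new ingredient, as the remark before the statement indicates, is to transfer the handling of truncation from a per-pair accounting (as in Lemma \ref{lem: blended-distribution-control}) to a per-vertex accounting. Let $T \subset W$ denote the random set of truncated ``outlier'' coordinates and call a vertex $u \in U$ \emph{lucky} if $N^W(u) \cap T = \emptyset$. Hoeffding's inequality combined with the $10p$-balance gives $\bP(w \in T) \leq q := 2\exp(-\Omega(1/(p|U|\beta^2))) = 2\exp(-\Omega(1/c^2))$ for each $w \in W$; since $d^W(u) \leq d_G(u) \leq 2pn$ by $p$-convenience, a union bound then gives $\bP(u \text{ unlucky}) \leq 2pn\cdot q$. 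This is the critical exploitation of the structural identity $D = pn/3 = \Theta(\Delta(G))$: the per-vertex union-bound cost $\Delta \cdot q$ is of the same order as the per-pair cost $D \cdot q$ appearing in Lemma \ref{lem: blended-distribution-control}, so summing over vertices rather than pairs saves the factor $|U|$ needed to keep the lucky subset $U'' := \{u \in U : N^W(u) \cap T = \emptyset\}$ a positive fraction of $U$ in expectation.

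For any two lucky vertices $u_1, u_2 \in U''$, the set $N^W(u_1) \triangle N^W(u_2) \subset N^W(u_1) \cup N^W(u_2)$ is disjoint from $T$, so the truncation does not affect the sum $(\mathbf{u}_1 - \mathbf{u}_2) \cdot \laur{p}|_W$. Consequently, on the event $\{u_1, u_2 \in U''\}$ the difference $\bE[d_{G(\laur{p})}(u_1)] - \bE[d_{G(\laur{p})}(u_2)]$ coincides with its un-truncated linear counterpart in the $\alpha_u$'s, whose coefficient of $\alpha_{u_1}$ is $\pm |N^W(u_1) \setminus N^W(u_2)|$, which is at least $D/2$ (WLOG). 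Conditioning on all other $\alpha_v$ and applying the small-ball argument from the proof of Lemma \ref{lem: blended-distribution-control} therefore bounds the joint probability that $u_1, u_2 \in U''$ and $|\bE[d_{G(\laur{p})}(u_1)] - \bE[d_{G(\laur{p})}(u_2)]| \leq 1$ by $2/(\beta D) = O(1/(c|U|))$. Summing over pairs gives an expected $O(|U|/c)$ ``bad'' pairs inside $U''$; fixing a realisation of $\laur{p}$ with $|U''|$ and the bad-pair count close to their means, an application of Tur\'an's theorem to the bad-pair graph on $U''$ then yields the required $U' \subset U''$ of size at least $|U|/500$ with pairwise $1$-separated expected degrees.

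The main obstacle is the two-sided trade-off in choosing $c$: making $c$ small keeps $q$ tiny and so makes $U''$ a positive fraction of $U$, but simultaneously shrinks $\beta D = \Theta(c|U|)$ and thereby weakens the small-ball bound $1/(\beta D)$; making $c$ large has the opposite effect. Navigating this balance so as to obtain a constant-fraction subset $|U'| \geq |U|/500$ is the delicate heart of the argument, and rests crucially on both the structural identity $D = \Theta(\Delta(G))$ and the cubic size relation $|U|^3 = pn^2/64$ built into the $p$-convenience hypothesis.
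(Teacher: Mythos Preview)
Your overall architecture matches the paper's: build the blended distribution $\mathcal{B}_\beta(U,W)$ with $\beta = c|U|/(pn)$, shift the truncation accounting from pairs to single vertices, and finish with Tur\'an. However, your specific ``lucky'' criterion $N^W(u)\cap T=\emptyset$ is too strict, and this is a genuine gap, not a cosmetic one. To make a constant fraction of $U$ lucky via your union bound $\bP(u\text{ unlucky})\leq 2pn\cdot q$, you need $q=O(1/(pn))$; since $q=2\exp\big(-\Omega(1/c^2)\big)$ with $p|U|\beta^2=c^2/64$, this forces $c=O\big(1/\sqrt{\log(pn)}\big)$. But then the small-ball bound is $2/(\beta D)=6/(c|U|)$, the expected number of bad pairs in $U''$ is $\Theta(|U|/c)$, and Tur\'an yields only $|U'|=\Omega(c|U|)=\Omega\big(|U|/\sqrt{\log n}\big)$, not $|U|/500$. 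The trade-off you flag in your final paragraph therefore cannot be balanced with your definition of ``lucky''; the structural identity $D=\Theta(\Delta)$ buys you exactly that the per-vertex cost $\Delta\cdot q$ matches the per-pair cost $D\cdot q$, but both still require $q=O(1/(pn))$.

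The paper repairs this in two moves. First, it allows a \emph{constant fraction} of outlier neighbours: a vertex $u$ is ``good'' if at most $d_G^S(u)/25$ of its $S$-neighbours are outliers. Now only $q\leq 1/100$ (a fixed constant, obtained with $\beta=|U|/5pn$) is needed, and Markov's inequality, rather than a union bound over $2pn$ neighbours, gives $\bP(u\text{ not good})\leq 1/4$. In the small-ball step one still finds $Y\subset N^S(u_1)\setminus N^S(u_2)$ of size $pn/12$ free of outliers, since $pn/6-2pn/25>pn/12$. Second, the paper defines outliers via $\laur{q}^u:=\laur{p}'-\alpha_u\cdot\mathrm{proj}_S(\mathbf{u})$ and the window $[0.2,0.8]$ rather than via $\laur{p}'$ and $[0.1,0.9]$; this makes ``$u$ is good'' independent of $\alpha_u$, so after conditioning on $(\alpha_v)_{v\neq u}$ and on $\{u\text{ good}\}$ the variable $\alpha_u$ is still uniform on $[-\beta,\beta]$, which your joint-probability workaround handles correctly but less cleanly. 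With these two changes the constants close and one obtains $|U'|\geq |U|/500$ exactly as stated.
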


\begin{proof}
We may assume that $n$ is large enough so that all asymptotic bounds hold. First set $\beta:= |U|/5pn<0.1$ and let $S\subset V(G)\setminus U$ be such that $U$ is $pn/3$-diverse to $S$ and $d^U_G(v)\leq 10p|U|$ for all $v\in S$. As in the proof of Lemma \ref{lem: blended-distribution-control}, for each $u\in U$ define the random vector $\laur{q}^u$ on $\mathbb{R}^S$ by $\laur{q}^{u}:=\laur{p}^\prime-\alpha_u\cdot \proj {S}{u}{}$, where $\laur{p}'$ is the vector from before the truncation in the definition of the blended distribution $\mathcal{B}_\beta(U,S)$. Recall we do this so that $\laur{q}^u$ is independent of $\alpha_{u}$. 

We call a vertex $u\in U$ \emph{good} if there are at most $d_G^S(u)/25$ coordinates $v\in S\cap N(u)$ so that $\laur{q}_v^u\notin [0.2,0.8]$. Let $U^g\subset U$ denote the set of \emph{good} vertices.\\
\indent We claim that $\bP(|U^g|\geq |U|/2)>1/2$. To prove this, take $u\in U$ and note that $\laur{q}^u_v$ is a sum of at most $10p|U|$ uniform independent random variables, thus by Hoeffding's inequality:
$$\bP\big(\laur{q}_v^u\notin [0.2,0.8]\big)=\bP\big(|\laur{q}_v^u-1/2|>0.3 \big)\leq 2\exp\left(\dfrac{-50\cdot 0.09\cdot p^2n^2}{10p|U|^3}\right)=2e^{-7.2}<\frac{1}{100}.$$
\indent We deduce that the expected number of coordinates $v\in V\cap N(u)$ with $\laur{q}_v^u\notin [0.2,0.8]$ is at most $d_G^V(u)/100$. By Markov we get that the vertex $u$ is not good with probability less than $1/4$. Therefore, the expected number of vertices $u\in U$ that are not good is at most $|U|/4$ and the claim follows from a simple application of Markov's Inequality.

We now set $T:=V(G)\setminus(S\cup U)$ and let $\mathcal{T}_T$ denote the trivial $T$-induced distribution. Take $\mathcal{D}$ to be the product distribution $B_\beta(U,S)\times \mathcal{T}_T$ on $[0.1,0.9]^{S\cup T}$. Given distinct vertices $u_1,u_2\in U$, we let $E_{u_1,u_2}$ denote the event that $\left|\bE_{\laur{p}\sim \mathcal{D}}\big[d_{G(\laur{p})}(u_1)-d_{G(\laur{p})}(u_2)\big]\right|\leq 1$. Moreover, since $U$ is $pn/3$-diverse to $S$, we know that either $|N_G^S(u_1)\setminus N_G^S(u_2)|\geq pn/6$ or $|N_G^S(u_2)\setminus N_G^S(u_1)|\geq pn/6$. We set $m_S(u_1,u_2):=u_1$ in the first case and $m_S(u_1,u_2):=u_2$ in the second one.

\indent Our next claim is that $\bP\big(E_{u,u^\prime}\ |\ m_S(u,u^\prime)\in U^g\big)\leq 120|U|^{-1}$ for all $u\neq u^\prime$ in $U$. To prove it, we can assume that $u=m_S(u,u^\prime)$. As $u\in U^g$, at most $2pn/25$ vertices in $N_G(u)$ represent coordinates $v$ such that $\laur{q}^u_v\notin [0.2,0.8]$. Therefore, we can find a subset $Y\subset N_G^S(u)\setminus N_G^S(u^\prime)$ of size $pn/12$ such that $\laur{q}^u_v\in [0.2,0.8]$ for all $v\in Y$. Since $\laur{p}^\prime_v=\laur{q}^u_v+\alpha_u \textbf{u}_v$ and $|\alpha_u|<0.1$, we deduce that no $Y$-coordinate of $\laur{p}^\prime$ gets truncated when creating $\laur{p}\sim B_\beta(U,S)$. Condition now on any choice of $\bm{\alpha}:=(\alpha_{w})_{w\neq u}$ such that $u\in U^g$ and note that $\alpha_u$ is independent of it.\\
\indent By looking at the following expression (when $\laur{p}\sim \mathcal{D}$) as a function of $\alpha_u$:
\begin{align*}
{\mathbb E}[ d_{G(\laur{p})}(u)] - 
    {\mathbb E}[ d_{G(\laur{p})}(u^\prime)] &= \text{ constant }+
 {\mathbb E}[ d_{G(\laur{p})}^S(u)] - 
    {\mathbb E}[ d_{G(\laur{p})}^S(u^\prime)]\\
    &= \text{ constant }+ (\proj {S}{u}{} - \proj {S}{v}{}) \cdot \proj{S}{\laur{p}}{}    
\end{align*}
we observe that $E_{u,u^\prime}$ holds provided that, conditioned on $\bm{\alpha}$, this difference lies in an interval of length $2$. The same argument as in Lemma \ref{lem: blended-distribution-control} gives us that $E_{u,u^\prime}|\bm{\alpha}$ happens with probability at most $24(pn\beta)^{-1}=120|U|^{-1}$. The claim follows from the law of total probability.

\indent To complete the proof, we consider the graph $J$ on the vertex set $U^g$ where $u_1u_2\in E(J)$ if $E_{u_1,u_2}$ holds. By the second claim we get $\bE[{e(J)}]\leq 120|U|^{-1}\cdot |U^g|(|U^g|-1)/2< 60|U|$, thus by Markov $\bP\big(e(J)>120|U|\big)<1/2$. It follows that $\bP\big(e(J)\leq 120|U|\big)>1/2$ and recall that $\bP(|U^g|\geq |U|/2)>1/2$. Therefore, with positive probability, we can choose $\laur{p}\sim \mathcal{D}$ such that $|U^g|\geq |U|/2$ and $e(J)\leq 120|U|$. For such a choice, the average degree of the resulting graph $J$ is $4e(J)/2|U_g|\leq 4e(J)/|U|\leq 480$. Thus, by Tur\'{a}n's Theorem $J$ has an independent set of size at least $|U|/500$. This independent set in $J$ is precisely what we required. 
\end{proof}\vspace{0.5mm}

\begin{prop}\label{random-to-discrete-degree}
 Let $G$ be a $n$-vertex graph and let $p \in [n^{-1/2}, 1/2]$. Suppose that there is a $p$-convenient set $U\subset V(G)$ in $G$, a vector $\textbf{\ul{p}}\in [0.1,0.9]^{V(G)\setminus U}$ and a vertex subset $U^\prime\subset U$ so that $\big|\bE[d_{G(\laur{p})}(u_1)]-\bE[d_{G(\laur{p})}(u_2)]\big|\geq 1$ for all distinct $u_1,u_2\in U^\prime$. Then $f(G)=\Omega(|U^\prime|)$.
\end{prop}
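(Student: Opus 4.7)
The plan is to sample a random induced subgraph $G[S] \sim G(\laur{p})$ (which by the convention of this section always contains $U$), form the collision graph $H$ on $U'$ with an edge between $u_i$ and $u_j$ whenever $d_{G[S]}(u_i)=d_{G[S]}(u_j)$, and show that $\bE[e(H)] = O(|U'|)$. Markov's inequality and Tur\'an's theorem (Theorem \ref{turan}) will then produce an independent set in $H$ of order $\Omega(|U'|)$, whose vertices have pairwise distinct degrees in $G[S]$, so $f(G) \geq \Omega(|U'|)$.

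First I would order $U' = \{u_1,\ldots,u_L\}$ by increasing expected degree, so that the $1$-separation hypothesis gives $D_{ij} := \bE d_{G(\laur{p})}(u_j) - \bE d_{G(\laur{p})}(u_i) \geq j-i$. For each pair the variable $d_{G[S]}(u_j) - d_{G[S]}(u_i)$ is $D_{ij}$ plus a mean-zero sum $W_{ij}$ of $T_{ij} := |\diver{u_i,u_j} \setminus U|$ independent $\pm$-Bernoullis with parameters $p_v \in [0.1,0.9]$. The $p$-convenient structure pins down $T_{ij} = \Theta(pn)$: the $pn/3$-diversity of $U$ to some $W \subseteq V(G)\setminus U$ forces $T_{ij} \geq pn/3$, while $d_G(u) \leq 2pn$ on $U$ forces $T_{ij} \leq 4pn$; since each $p_v(1-p_v) \in [0.09, 0.25]$, also $\sigma_{ij}^2 := \mathrm{Var}(W_{ij}) = \Theta(pn)$.

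The heart of the argument will be a local central limit type point-mass bound
\[
\bP(W_{ij} = k) \leq \frac{C}{\sigma_{ij}}\exp\!\Big(\!-\frac{k^2}{C\sigma_{ij}^2}\Big) + \frac{C}{\sigma_{ij}^2}
\]
for every integer $k$. Applied at $k = -D_{ij}$ this bounds the collision probability by $(C/\sqrt{pn})\exp(-(j-i)^2/(Cpn)) + C/(pn)$, and summing across pairs,
\[
\bE[e(H)] \leq \frac{CL}{\sqrt{pn}}\sum_{d\geq 1}\exp(-d^2/(Cpn)) + \frac{CL^2}{pn} = O(L),
\]
since the Gaussian-type sum is $O(\sqrt{pn})$ and $L = O\big((pn^2)^{1/3}\big) = O(pn)$ in the regime $p \geq n^{-1/2}$.

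The hard part will be justifying the pointwise bound on $\bP(W_{ij}=k)$, which is strictly stronger than the minimum of the Erd\H{o}s--Littlewood--Offord bound $O(\sigma_{ij}^{-1})$ and the Hoeffding bound $\exp(-k^2/(C\sigma_{ij}^2))$; taking only that minimum, in the spirit of Lemma \ref{lem: probdegcontrol}, would introduce a $\sqrt{\log(pn)}$ factor after summation and would spoil the sharpness of Theorem \ref{thm: DD for G(n,p)}. I would derive the sharper, multiplicative estimate by Fourier inversion: the characteristic function $\phi_{W_{ij}}(t) = \prod_v(1 - p_v + p_v e^{i\epsilon_v t})$ satisfies $|\phi_{W_{ij}}(t)| \leq \exp(-c\sigma_{ij}^2(1-\cos t))$ thanks to $p_v \in [0.1,0.9]$, so splitting the inversion integral into a Gaussian window near $t=0$ plus exponentially small tails produces exactly the two-term estimate above.
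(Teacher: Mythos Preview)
Your approach is correct and reaches the same conclusion, but it is genuinely different from the paper's argument. The paper avoids any local limit theorem: it first restricts to the set $B\subset U'$ of vertices whose realised degree lies within $\sqrt{2pn}$ of its expectation (Chebyshev gives $|B|\geq |U'|/2$ with probability $\geq 1/2$), then observes that any collision inside $B$ forces the two expected degrees to differ by at most $2\sqrt{2pn}$; by $1$-separation there are only $O(|U'|\sqrt{pn})$ such pairs, and the bare Erd\H{o}s--Littlewood--Offord bound $O((pn)^{-1/2})$ on each collision already yields $\bE[|J|]=O(|U'|)$. Tur\'an is then applied to $J[B]$. Your route instead bounds every pairwise collision probability by a non-uniform local CLT estimate and sums; this is heavier machinery but more direct, and the Fourier sketch you give is the standard proof. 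The paper's concentration trick buys an entirely elementary argument with only Chebyshev and Theorem~\ref{problitoff}.

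One small gap: you justify $L=O(pn)$ by importing $|U|=O((pn^2)^{1/3})$ from Proposition~\ref{prop-expected-deg}, but that bound on $|U|$ is not among the hypotheses of the present proposition. The right justification is internal: $p$-convenience gives $d_G(u)\leq 2pn$ for every $u\in U$, hence $\bE[d_{G(\laur{p})}(u)]\leq 2pn$, and the $1$-separation of $U'$ then forces $|U'|\leq 2pn+1$. With this fix your summation of the $C/\sigma_{ij}^2$ error term goes through as written.
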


\begin{proof}
We can assume $n$ is sufficiently large. Let $H$ be a random induced subgraph selected according to $G(\laur{p})$ and define for it the following sets:
\begin{align*}
&B=\{u\in U^\prime: \big|d_H(u)-\bE[d_{G(\laur{p})}]\big|\leq \sqrt{2pn}\},\\ 
&P=\{\{u,u^\prime\}\subset U^\prime: \big|\bE[d_{G(\laur{p})}(u)]-\bE[d_{G(\laur{p})}(u^\prime)]\big|\leq 2\sqrt{2pn}\},\\
&J=\{\{u,u^\prime\}\in P:d_H(u)=d_H(u^\prime)\}.
\end{align*}
\indent Our first claim is that $\bP(|B|\geq|U^\prime|/2)\geq 1/2$. To prove it, we start by estimating $|B|$. For any $u\in U^\prime$ we have $\mathbb{V}\text{ar}\big(d_{G(\laur{p})}(u)\big)=\sum_{v\sim u} p_v(1-p_v)\leq pn/2$, thus Chebyshev's Inequality implies that $\bP(u\notin B)\leq 1/4$. It follows that $\bE[|U^\prime\setminus B|]\leq |U^\prime|/4$, so by Markov's inequality we get that $\bP(|U^\prime\setminus B|\geq |U^\prime|/2)\leq 1/2$, which is equivalent to our claim.

\indent We now want to estimate $|J|$. First note that the separation in expected degree for $U^\prime$ implies that $|P|\leq 2|U^\prime|\sqrt{2pn}$. Each $\{u,u^\prime\}$ belongs to $J$ with probability $\bP\big(d_H(u)-d_H(u^\prime)=0\big)$, which we claim is $O\left(1/\sqrt{pn}\right)$. This happens because $d_H(u)-d_H(u^\prime)=\sum \xi_v X_v$, where the sum is taken over all $v\in \text{div}(u,u^\prime)\setminus U$, $\xi_v\in\{-1,1\}$ and $X_v\sim Be(p_v)$ measures whether $v\in V$ is picked as a vertex of $H$ or not. As $U$ is $pn/3$-diverse to some subset $S\subset V(G)\setminus U$, we deduce that $|\text{div}(u,u^\prime)\setminus U|\geq |N_G^S(u)\triangle N_G^S(u^\prime)|\geq pn/3$, so we can apply Theorem \ref{problitoff} to prove the previous claim. Therefore $\bE[|J|]\leq |P|\cdot \displaystyle\max_{\{u,u^\prime\}\in P}\bP\big(d_H(u)=d_H(u^\prime)\big)=O(|U^\prime|)$.\\
\indent It follows that $\bP\big(|J|=O(|U^\prime|)\big)>1/2$ by Markov, so together with the first claim, we are able to deduce that both $|J|=O(|U^\prime|)$ and $|B|\geq|U^\prime|/2$ happen with positive probability. The end of the proof follows the same idea as before: make a choice of $H$ for which this happens and by Tur\'{a}n's Theorem the graph $J[B]$ obtained by building edges between the vertices of $B$ which have equal degree in $H$ has an independent set of size $\Omega(|U^\prime|)$. This set must consist of vertices with distinct degrees in $H$, as if $u,u^\prime\in B$ and $d_H(u)=d_H(u^\prime)$ then $\{u,u^\prime\}\in P$ and so $\{u,u^\prime\}\in J$, which represents an edge in $J[B]$.
\end{proof}

With all these ingredients, we are finally able to prove the following:

\begin{thm}
Given $n \in {\mathbb N}$ and $p \in [n^{-1/2}, 1/2]$, one has $f\big(G(n,p)\big)=\Omega\big(\sqrt[3]{pn^2}\big)$ whp.
\end{thm}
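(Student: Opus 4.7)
The plan is to chain together the four preparatory results in the subsection: Proposition \ref{deg-ord-np}, Lemma \ref{bded-u-degree}, Proposition \ref{prop-expected-deg} and Proposition \ref{random-to-discrete-degree}. First I would expose the randomness of $G \sim G(n,p)$. Since $p \geq n^{-1/2} \gg \log n / n$, Proposition \ref{deg-ord-np} guarantees that whp every vertex has degree at most $2pn$.

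Next, fix any subset $U \subset V(G)$ of size $|U| = \tfrac{1}{4}\sqrt[3]{pn^2}$. The condition $\sqrt{n} \leq 4|U| \leq pn$ translates to $n^{1/2} \leq \sqrt[3]{pn^2} \leq pn$, which is exactly the range $p \in [n^{-1/2}, 1/2]$ (possibly tweaking constants). Hence Lemma \ref{bded-u-degree} applies and produces whp a set $W \subset V(G) \setminus U$ such that $U$ is $pn/3$-diverse to $W$ and $d_G^U(w) \leq 10p|U|$ for every $w \in W$. Combined with the maximum degree bound, this means $U$ is a $p$-convenient set in $G$, so whp we have exhibited a deterministic graph $G$ (on which we now condition) containing a $p$-convenient set $U$ of size $\tfrac{1}{4}\sqrt[3]{pn^2}$.

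With this fixed graph $G$ in hand, I would apply Proposition \ref{prop-expected-deg} to obtain a probability vector $\laur{p} \in [0.1,0.9]^{V(G)\setminus U}$ and a subset $U' \subset U$ with $|U'| \geq |U|/500 = \Omega(\sqrt[3]{pn^2})$ such that the expected degrees $\bE[d_{G(\laur{p})}(u)]$ for $u \in U'$ are pairwise $1$-separated. Finally, Proposition \ref{random-to-discrete-degree} (whose hypotheses are precisely the output of the previous step, together with $U$ being $p$-convenient) yields $f(G) = \Omega(|U'|) = \Omega(\sqrt[3]{pn^2})$, completing the proof.

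There is no genuine obstacle here, as the real work has been done in the preparatory lemmas; the only minor thing to verify is the arithmetic that $\tfrac{1}{4}\sqrt[3]{pn^2}$ lies in the window $[\sqrt{n}/4, pn/4]$ required by Lemma \ref{bded-u-degree} throughout the range $p \in [n^{-1/2}, 1/2]$, and that the various whp-events (bounded maximum degree, existence of $W$) can be intersected without loss.
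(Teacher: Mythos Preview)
Your proposal is correct and follows essentially the same approach as the paper's proof: expose the randomness, use Proposition~\ref{deg-ord-np} and Lemma~\ref{bded-u-degree} to obtain a $p$-convenient set $U$ of size $\sqrt[3]{pn^2}/4$, apply Proposition~\ref{prop-expected-deg} to extract a $1$-separated subset $U'$ of size $\Omega(\sqrt[3]{pn^2})$, and finish with Proposition~\ref{random-to-discrete-degree}. The paper's own proof is a one-paragraph chaining of exactly these four results, just as you outline.
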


\begin{proof}
We expose the randomness in $G(n,p)$ and thus move to a fixed graph $G$. According to Proposition \ref{deg-ord-np} and Lemma \ref{bded-u-degree}, we can find a $p$-convenient set $U$ in $G$ of size $\sqrt[3]{pn^2}/4$. We then apply Proposition \ref{prop-expected-deg} to find a vector $\textbf{\ul{p}}\in [0.1,0.9]^{V(G)\setminus U}$ and a subset $U^\prime\subset U$ of size $\Omega\left(\sqrt[3]{pn^2}\right)$ so that $\big|\bE[d_{G(\laur{p})}(u_1)]-\bE[d_{G(\laur{p})}(u_2)]\big|\geq 1$ for all distinct $u_1,u_2\in U^\prime$. Lastly, Proposition \ref{random-to-discrete-degree} allows us to convert a constant proportion of the distinct expected degrees in $U^\prime$ to genuine distinct degrees, thus completing the proof.
\end{proof}

 \subsection{ \texorpdfstring{$f(G(n,p))$ when $p \ll n^{-1/2}$}{f(G(n,p)) when p is small}}
 
 For completeness, in this subsection we discuss the behaviour of $f(G(n,p))$ for $p \ll n^{1/2}$. 
 First, to see that there is a change in behaviour here over the range $p \in [n^{-1/2},1/2]$, 
 note that if $G\sim G(n,p)$ with $\log n/n\ll p\ll n^{-1/2}$ then a simple concentration argument combined with the union bound shows that whp $d_G(u)=O(pn)$ for every vertex $u\in V(G)$, showing that  $f(G)=O(pn) = o(\sqrt [3] {pn^2})$ in this case. 
 
 As indicated in Theorem \ref{thm: DD for G(n,p)} (ii), in this regime the maximum degree of $G(n,p)$ is a key parameter. The following simple proposition is useful here.
 
 \begin{prop}
    \label{prop: sparse G(n,p) bound}
    Let $G$ be an $n$-vertex graph and let $U\subset V(G)$ with $|U| = k$  such that 
    $|N_G(u) \setminus \big ( U \cup \cup _{u'\in U\setminus \{u\}} N(u') \big )| \geq k$ for all $u \in U$. 
    Then $f(G) \geq k$.
 \end{prop}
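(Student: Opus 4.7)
The plan is to exhibit an induced subgraph $G[S]$ in which the $k$ vertices of $U$ all attain pairwise distinct degrees, by adjusting how many ``private'' neighbours of each $u \in U$ are included in $S$. For each $u \in U$, let $P_u := N_G(u) \setminus \bigl( U \cup \bigcup_{u' \in U \setminus \{u\}} N(u') \bigr)$, so by hypothesis $|P_u| \geq k$. The key feature is that every $v \in P_u$ is adjacent to $u$ but to no other vertex of $U$, hence the sets $\{P_u\}_{u \in U}$ are pairwise disjoint and controlling which private neighbours we keep in $S$ lets us tune the degree of each $u \in U$ independently.

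The steps are as follows. First, order $U = \{u_1, \ldots, u_k\}$ so that $d_i := d_{G[U]}(u_i)$ is non-decreasing in $i$. Next, for each $i \in [k]$ choose an arbitrary subset $T_i \subseteq P_{u_i}$ of size $i-1$, which is possible since $|P_{u_i}| \geq k$. Finally, let $S := U \cup \bigcup_{i \in [k]} T_i$.

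To compute $d_{G[S]}(u_i)$, note that the neighbours of $u_i$ in $S$ are exactly its $d_i$ neighbours in $U$ together with the $i-1$ chosen private neighbours in $T_i$; by definition of $P_{u_j}$, no vertex of $T_j$ with $j \neq i$ is a neighbour of $u_i$. Hence $d_{G[S]}(u_i) = d_i + (i-1)$. Since the sequence $(d_i)$ is non-decreasing, we get $(d_{i+1} + i) - (d_i + (i-1)) = d_{i+1} - d_i + 1 \geq 1$, so the degrees $d_{G[S]}(u_1), \ldots, d_{G[S]}(u_k)$ are strictly increasing and in particular pairwise distinct, giving $f(G) \geq k$.

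There is no real obstacle here: the whole argument rests on the observation that private neighbourhoods are disjoint and are attached to exactly one vertex of $U$, so their inclusion shifts the degree of exactly one $u_i$. The only mild care needed is in ordering $U$ by $d_i$ before assigning the shifts $0,1,\ldots,k-1$; without sorting, the naive choice $d_i + (i-1)$ could produce collisions.
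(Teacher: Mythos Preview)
Your proof is correct and essentially identical to the paper's: both order $U$ by non-decreasing degree into $U$, attach $i-1$ (the paper uses $i$, an immaterial shift) private neighbours to $u_i$, and observe that the resulting degrees $d_i + (i-1)$ are strictly increasing. The only cosmetic difference is notation.
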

 
 \begin{proof}
    Let $U := \{u_1,u_2\ldots, u_k\}$ such that $|N_G(u_i) \cap U|$ is non-decreasing with $i$. For each 
    $i\in [k]$ take $S_i \subset N_G(u_i) \setminus \big ( U \cup \cup _{u' \in U\setminus \{u\}} N(u') \big )$ 
    with $|S_i| = i\ -$ by the hypothesis such sets exist. It is now easy to see that the degrees of 
    the vertices $u_1,u_2,\ldots, u_k$ are strictly increasing in the induced 
    subgraph $G \big [U \cup (\cup _{i\in [k]} S_i) \big ]$, giving $f(G) \geq k$, as required.
 \end{proof}
 
 The following observations show that $f(G(n,p)) = \Theta \big ( \Delta ( G(n,p) ) \big )$ in this regime.
    \begin{itemize}
        \item [(i)] If $\log n / n \ll p \leq n^{-1/2}$ then whp $d_G(u) \in [pn/2, 2pn]$ for all $u\in V(G)$, therefore we deduce that $\Delta (G(n,p)) = \Theta (np)$;
        \item [(ii)] If $\log n / n \ll p \leq n^{1/2}$ then given any fixed set $U \subset V(G(n,p))$ with 
        $|U| = pn/8$ and $u \in U$ we have ${\mathbb E} \big [N_G(u) \setminus (U \cup \cup _{u' \in U \setminus \{u\}} N(u') ) \big ] = p(n-|U|)(1-p)^{|U|} \geq pn/4$. Chernoff's Inequality then 
        applies to give us that
        $|N_G(u) \setminus (U \cup \cup _{u' \in U \setminus \{u\}} N(u') )| \geq pn/8 = |U|$ for all $u\in U$ whp. Proposition \ref{prop: sparse G(n,p) bound} together with (i) then gives
        $f(G) \geq |U| = \Theta ( \Delta (G(n,p)))$ for $\log n / n \ll p \leq n^{-1/2}$ whp.
        \item [(iii)] If $0 \leq p \leq O( \log n / n )$ then $G(n,p)$ has $\Omega ( \Delta (G(n,p)))$ vertices 
        of degree $\Omega ( \Delta (G(n,p)))$ whp (e.g. see Theorem 3.1 in \cite{bol-rg}). It is 
        therefore possible to find a set $U$ of $c \cdot \Delta (G(n,p))$ vertices with degree at 
        least $5|U|$, provided that $c>0$ is sufficiently small. 
        \item [(iv)] It is also true that if 
        $p \leq n^{-3/4}$ then whp $|N(u) \cap N(u')| \leq 3$ for all pairs of distinct vertices $u, u' \in V(G(n,p))$. With
        $U$ chosen as in (iii) it follows 
        that $|N(u) \setminus (U \cup \cup _{u' \in U\setminus \{u'\}} N(u'))| \geq 
        |N(u)| - |U| - 3|U| \geq |U|$. Thus $f(G(n,p)) = \Theta (\Delta (n,p))$ for $p = O(\log n / n)$.
    \end{itemize}

\section{Concluding remarks}
\label{section: concluding-remarks}

Theorem \ref{thm: N-T-intro-version} proves an essentially sharp dependence between $\hom (G)$ and $f(G)$ for $n$-vertex graphs with $\hom (G) \geq n^{1/2}$, which asymptotically resolves a conjecture of Narayanan and Tomon from \cite{narayanan}. It would be appealing to further remove the logarithmic terms here.

Another perhaps more compelling problem is to understand the relationship between these parameters when $\hom (G) < n^{1/2}$. Recall that Theorem \ref{thm: DD for G(n,p)} gives:
$$
f\big(G(n,p)\big)= 
\begin{cases}
      \Theta\left(\sqrt[3]{pn^2}\right) \text{ for } p \in [n^{-1/2}, 1/2];\\
      \Theta \big (\Delta (G(n,p)) \big ) \text{ for } p \in [0,  n^{-1/2}].
    \end{cases}
$$
\indent It is well known that $\text{hom}(G(n,p))\sim -\log n/\log (1-p)$ (see e.g. \cite{cliques}) when $0<p\leq 1/2$ is a fixed constant. For a general $p:=p(n)\leq 1/2$, the probability of having a set of size $k$ which is homogeneous in $G(n,p)$ is at most: 
$$
\binom {n}{k} \big ( p^{\binom {k}{2}} + (1-p)^{\binom {k}{2}} \big )
    \leq 
2 n^k (1-p)^{\binom {k}{2}} \leq 2 n^k e^{-p \binom {k}{2}}
= 2 \left(n e^{-p(k-1)/2} \right)^{k}.
$$
\indent In particular, $\mbox {hom}(G(n,p)) \leq 4p^{-1} \log n$ whp. Combined with the bounds for $f(G(n,p))$ from Theorem \ref{thm: DD for G(n,p)} we find that for $p \in [n^{-1/2}, 1/2]$ we have: 
$$
    f(G(n,p)) = \widetilde \Omega \Bigg ( \sqrt [3] {\frac { n^2 } {\hom (G(n,p)) }} \Bigg ) \quad \mbox{whp}.
$$
\indent We believe that a similar bound holds for any $n$-vertex graph $G$ with $\hom (G) < n^{1/2}$.

\begin{conjecture}
\label{conj: other-regime}
If $G$ is an $n$-vertex graph then: 
    \begin{align*}
        f(G) \geq \min \left (   \sqrt [3] { \frac {n^2}{\hom (G)}}   , \frac {n}{\hom (G) } \right ) n^{-o(1)}.
    \end{align*}
\end{conjecture}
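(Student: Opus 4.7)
Since the case $\hom(G)\geq n^{1/2}$ is already covered by Theorem \ref{thm: N-T-intro-version}, write $m:=\hom(G)$ and assume $m<n^{1/2}$, so that the new content is the bound $f(G)\geq (n^2/m)^{1/3-o(1)}$. As in the proof of Theorem \ref{thm: N-T-intro-version}, Theorem \ref{thm: DExpD} combined with Lemma \ref{lem: bad-control-implies-distinct-expected-degrees} reduces the task to exhibiting a set $U\subset V(G)$ of size $k:=(n^2/m)^{1/3-o(1)}$ together with a distribution $\mathcal{D}$ on $[0.1,0.9]^{V(G)}$ for which $\bad{U}=O(|U|\log |U|)$. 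The guiding heuristic is that $G(n,p)$ with $p=\Theta(\log n/m)$ has $\hom\sim m$, so we expect to treat $G$ at an \emph{effective density} $p:=C\log n /m$, matching the bound $\sqrt[3]{pn^2}$ from Theorem \ref{thm: DD for G(n,p)}(i).

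The core step is to produce a $p$-convenient set (in the sense of Section \ref{sec: random graph distinct degrees}) of size roughly $k$, i.e. a set $U$ of vertices of degree $O(pn)$, a set $W\subset V(G)\setminus U$ with $U$ being $\Omega(pn)$-diverse to $W$, and each $w\in W$ sending $O(p|U|)$ edges into $U$. Once such $U$ is in hand, Propositions \ref{prop-expected-deg} and \ref{random-to-discrete-degree} transfer essentially verbatim: the blended distribution $\mathcal{B}_\beta(U,W)$ with $\beta=\Theta(|U|/(pn))$ produces a large subset $U'\subset U$ whose expected degrees are $1$-separated, and Erd\H{o}s--Littlewood--Offord anticoncentration then converts these into genuine distinct degrees in an induced subgraph. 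The constraints controlling the truncation and the diversity jointly allow $|U|$ to be pushed up to order $(pn^2)^{1/3}=(n^2/m)^{1/3}$ up to logarithmic factors.

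To construct $U$, I would first regularize via Lemma \ref{lem-reg} to pass to a large induced subgraph $H$ with $\Delta(H)=\widetilde{O}(\delta(H))$, and then emulate the case split in Theorem \ref{longest-ever} applied to the diversity graph $J_\varepsilon(H)$. In the high-diversity case a random subset of density $\Theta(k/n)$ yields, via Chernoff and Markov, a sufficiently large independent set in $J_\varepsilon(H)$, whose balance condition to $V(H)\setminus U$ follows from near-regularity of $H$. In the low-diversity case one finds a thin set of hubs whose neighbourhoods essentially partition most of $V(H)$; here the plan is to apply the statement inductively inside each cluster and glue the resulting distributions together using uniformly constant distributions on hub-separating sets, as in Theorem \ref{longest-ever}.

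The main obstacle is controlling the exponent $1/3$ through the cluster recursion. Since $\hom(G[S])\leq \hom(G)$, restricting to a cluster can only decrease the effective density $p$, and unlike in $G(n,p)$ this drop need not be balanced by a proportional decrease in $n$; preserving the exponent $1/3$ therefore requires a convexity estimate for cluster sizes analogous to Lemma \ref{oprimization-map} but with $3/4$ replaced by $2/3$, together with a more delicate control of the recursion depth than in Section \ref{sec: N-T conjecture}. A second subtlety is that at the extremal scale $|U|=\Theta((n^2/m)^{1/3})$ the truncation in the blended distribution is only marginally acceptable ($\beta D$ is of constant order, as in the remark preceding Proposition \ref{prop-expected-deg}), so one must again show, as in that proposition, that a positive proportion of vertices of $U$ are \emph{simultaneously} good for all relevant pairs; doing so in the general-graph setting, without the independence of $G(n,p)$, appears to be the main difficulty in executing this plan.
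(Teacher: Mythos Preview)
The statement you are attempting to prove is Conjecture~\ref{conj: other-regime}, and the paper does \emph{not} provide a proof of it: it is explicitly left open. The paper only observes that the case $\hom(G)\geq n^{1/2}$ follows from Theorem~\ref{thm: N-T-intro-version}, that $G(n,p)$ shows the bound is essentially tight, and that the two endpoints $\hom(G)=\Theta(\log n)$ (via \cite{JKLY}) and $\hom(G)=n^{1/2+o(1)}$ are known. There is therefore no ``paper's own proof'' to compare against.

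Your proposal is not a proof either, and you are honest about this: it is an outline of a plausible strategy with two explicitly flagged gaps. Both gaps are genuine. First, the cluster recursion in Theorem~\ref{longest-ever} crucially exploits the relation $n\geq Ck^2$ to ensure that the parameter $k_w=|S'_w|k/n$ in each cluster keeps $|S'_w|/k_w^{5/2}$ bounded below; the analogous invariant for exponent $1/3$ would require $|S'_w|/k_w^{3/2}$ control, but since $\hom$ does not shrink proportionally when restricting to a cluster, the effective $p$ can collapse and the target set size $(n^2/m)^{1/3}$ is not preserved under the recursion as stated. Second, and more seriously, the ``simultaneously good'' argument in Proposition~\ref{prop-expected-deg} relies on the fact that in $G(n,p)$ each $\laur{q}^u_v$ is a sum of independent uniforms whose \emph{number of terms} is controlled by the random edge structure; in a deterministic graph you only have the $\gamma$-balance bound $d_G^U(v)\leq \gamma|U|$, and at $\beta=\Theta(|U|/(pn))$ with $|U|=\Theta((pn^2)^{1/3})$ the Hoeffding exponent $0.045/(\gamma\beta^2|U|)$ is $\Theta(1)$, so the tail probability for a single coordinate being naughty is only a small constant, not $o(1)$. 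Upgrading from ``each pair is good with high probability'' to ``most vertices are good for all pairs'' then requires an additional covariance or union-bound argument that does not follow from the tools in the paper.
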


Observe that the minimum above changes exactly when $\hom (G) = n^{1/2}$, value after which the Narayanan--Tomon conjecture begins to apply. Theorem \ref{thm: N-T-intro-version} proves it for $\hom (G) \geq n^{1/2}$. Theorem \ref{thm: DD for G(n,p)} shows that this behaviour is essentially tight for $G(n,p)$ when $p = n^{-1/2}$, when $\hom (G(n,p)) = n^{1/2+o(1)}$. At the opposite extreme, $n$-vertex graphs with $\hom (G)$ as small as possible (Ramsey graphs) were proven by Jenssen et al. in \cite{JKLY} to have $f(G) = \Omega (n^{2/3})$, and so the conjecture is true at both ends of the interval $\hom (G) \in [\Omega (\log n), n^{1/2}]$.\vspace{2mm}

\noindent \textbf{Acknowledgement} 

We would like to thank the referees for their careful reading of the paper, and for a number of useful suggestions.

\printbibliography

\end{document}